\newtheorem*{rep@theorem}{\rep@title}
\newcommand{\newreptheorem}[2]{%
\newenvironment{rep#1}[1]{%
 \def\rep@title{#2 \ref{##1}}%
 \begin{rep@theorem}}%
 {\end{rep@theorem}}}
\newtheorem{theorem}{Theorem}[section]
\newtheorem{prop}[theorem]{Proposition}
\newtheorem{proposition}[theorem]{Proposition}
\newtheorem{lemma}[theorem]{Lemma}
\newtheorem{corollary}[theorem] {Corollary}
\theoremstyle{remark}
\newtheorem{remark}[theorem]{Remark}
\theoremstyle{definition}
\newtheorem{definition}[theorem]{Definition}
\def\C{\mathcal{C}}
\def\G{\Gamma}
\def\g{\gamma}
\def\ad{{\rm{ad}}}
\def\aut{{\rm{Aut}}}
\def\autn{{\rm{Aut}}(F_n)}
\def\autgd{\aut(\mathcal{G},D_{*v})}
\def\out{{\rm{Out}}}
\def\outn{{\rm{Out}}(F_n)}
\def\outgd{\aut(\mathcal{G},\widehat{D})}
\def\autog{\aut^0(\mathcal{G},\widehat{D})}
\def\co{\colon\thinspace}
\DeclareMathOperator{\im}{Im}
\def\id{{\rm{id}}}
\title{Centralisers of Dehn twist automorphisms of free groups}
\author{Moritz Rodenhausen and Richard D. Wade}
\subjclass[2010]{20E36,20F65}
\keywords{Free group automorphisms, graphs of groups, Dehn twists, finiteness properties}
\begin{document}

\begin{abstract} 
We refine Cohen and Lustig's description of centralisers of Dehn twists of free groups. We show that the centraliser of a Dehn twist of a free group has a subgroup of finite index that has a finite classifying space. We describe an algorithm to find a presentation of the centraliser. We use this algorithm to give an explicit presentation for the centraliser of a Nielsen automorphism in $\aut(F_n)$. This gives restrictions to actions of $\aut(F_n)$ on CAT(0) spaces.
\end{abstract}

\maketitle

\section{Introduction}

Given a group $G$ and an element $g \in G$, a natural question is study the centraliser $C(g)$ of $g$ in $G$. In several classes of groups, such as hyperbolic and CAT(0) groups \cite{BH}, as well as mapping class groups \cite{Iv}, centralisers of elements are reasonably well-understood. In $\out(F_n)$, Feighn and Handel classified abelian subgroups in $\out(F_n)$ by studying centralisers of elements \cite{FH} and the centraliser of a \emph{fully irreducible} element is virtually-cyclic \cite{BFH}. However, it is not clear how centralisers of elements behave in general.

A \emph{Dehn twist} $D=(\mathcal{G},(\g_e)_{e \in E(\G)})$ is determined by a graph of groups $\mathcal{G}$ along with a twisting element $\g_e$ in the centre of each edge group of $\mathcal{G}$. An isomorphism $\rho:\pi_1(\mathcal{G},v) \to F_n$ then defines elements $D_{*v} \in \aut(F_n)$ and $\widehat{D} \in \out(F_n)$. The class of all such automorphisms includes Nielsen automorphisms and Whitehead automorphisms of infinite order. More generally, every automorphism of linear growth has a power which is a Dehn twist in $\out(F_n)$ \cite{KLV}. 

It is illuminating to look first at the analogous elements in mapping class groups: comparable to $\widehat{D}$ is a \emph{multitwist} $M$ given by taking disjoint, pairwise non-isotopic, simple closed curves $c_1,\ldots,c_m$ on a surface $\Sigma$, non-zero integers $a_1,\ldots,a_m$, and defining $M=T_{c_1}^{a_1} \cdots T_{c_m}^{a_m}$, where $T_{c_i}$ is the Dehn twist (in the classical sense) in the curve $c_i$. Cutting along the curves $c_1,\ldots,c_m$ gives a set of punctured subsurfaces $\Sigma_1,\ldots,\Sigma_k$. The centraliser $C(M)$ of the multitwist in the mapping class group $MCG(\Sigma)$ has a finite index subgroup $C^0(M)$ which fixes each curve and subsurface up to isotopy. $C^0(M)$ fits into the exact sequence:
$$1 \to \mathbb{Z}^m \to C^0(M) \to \bigoplus_{i=1}^k MCG(\Sigma_k) \to 1,$$
where the free abelian group $\mathbb Z^m$ in this sequence is generated by $T_{c_1},\ldots, T_{c_m}$ and the right hand surjection is given by restricting the mapping class to each $\Sigma_i$. This allows one to study $C(M)$ through mapping class groups of open subsurfaces.

Our main theorem establishes a similar picture when we have an \emph{efficient} Dehn twist $D=(\mathcal{G},(\g_e)_{e \in E(\G)})$ (Definition~\ref{efficient}, below) and $\pi_1(\mathcal{G},v)\cong F_n$. In this situation, each vertex group $G_w$ of $\mathcal{G}$ is a finitely generated free group and each edge group $G_e$ is cyclic. For a vertex group $G_w$, the generating elements of the edge groups adjacent to $w$ give a set of conjugacy classes $\mathcal{C}_w$ in $G_w$. Rather than having mapping class groups of punctured subsurfaces, we instead have the relative automorphism groups  $\out(G_w,\mathcal{C}_w)$ and $\aut(G_w,\mathcal{C}_w)$ that consist of (outer) automorphisms that fix each conjugacy class in the finite set $\mathcal{C}_w$.

There is an action of $C(\widehat{D})$ on the underlying graph $\G$ of $\mathcal{G}$, and we define $C^0(\widehat{D})$ to be the finite-index subgroup consisting of automorphisms that act trivially on $\G$. 

\begin{reptheorem}{t:ses}
Let $D$ be an efficient Dehn twist on a graph of groups $\mathcal{G}$ with $\pi_1(\mathcal{G},v)\cong F_n$. Let $C(\widehat{D})$ be the centraliser of $D$ in $\outn$. There exists a homomorphism: \begin{align*} \bar{\alpha}\co &C(\widehat{D}) \to \aut(\G),\end{align*} with kernel $C^0(\widehat{D})$ a finite index subgroup fitting into the exact sequence: \begin{gather*}1\to DO(\mathcal{G}) \to C^0(\widehat{D}) \to \bigoplus_{w \in V(\G)} \out(G_w,\mathcal{C}_w)\to1,  \end{gather*}
where $DO(\mathcal{G})$ is a free abelian group of Dehn twists of rank equal to the number of geometric edges of $\mathcal{G}$.\end{reptheorem}

Our main inputs are the theory of \emph{automorphisms of graphs of groups} developed by Bass and Jiang \cite{BJ} (summarised in Section~\ref{s:background}) and a theorem of Cohen and Lustig \cite{CL} showing that an element of the centraliser of an efficient Dehn twist may be represented by an automorphism of $\mathcal{G}$.
There is a group $\aut(\mathcal{G},\widehat{D})$ of automorphisms of the graph of groups $\mathcal{G}$ and a surjection $\pi:\aut(\mathcal{G},\widehat{D}) \to C(\widehat{D})$. The maps from Theorem~\ref{t:ses} are easy to define on $\aut(\mathcal{G},\widehat{D})$, and the work of Bass and Jiang shows that these homomorphisms factor through $\pi$. The same techniques have been used by Levitt \cite{L} to study automorphisms of hyperbolic groups, and more recently Guirardel--Levitt \cite{GL} in the relatively hyperbolic case.

Cohen and Lustig also show that each Dehn twist in $\out(F_n)$ may be represented by an efficient Dehn twist. As each group $\out(G_w,\mathcal{C}_w)$ has a finite index subgroup with finite classifying space (\cite{CV}, Corollary 6.1.4.), we have the following corollary:
\begin{repcorollary}{c:fpout} If $\phi \in \out(F_n)$ is a Dehn twist automorphism then $C(\phi)$ has a finite-index, torsion-free subgroup with finite classifying space. \end{repcorollary}

In $\aut(F_n)$ the situation is slightly trickier, as here it is not true that every Dehn twist has an efficient representative.  For this reason, in Section~\ref{s:peff} we introduce the notion of a \emph{pointedly efficient Dehn twist}. We show that every Dehn twist in $\aut(F_n)$ has a pointedly efficient representative, and for such an element we have a similar decomposition as in Theorem~\ref{t:ses}. We use this in Section~\ref{s:alg} to describe an algorithm to find a presentation of the centraliser of a Dehn twist automorphism in $\aut(F_n)$ or $\out(F_n)$. In Section~\ref{s:Nielsen} we use this algorithm to give a presentation for the centraliser of a Nielsen automorphism $\rho$ in $\aut(F_n)$. This allows us to compute $H_1(C(\rho))$, which after some interesting low rank cases, stabilizes for $n\geq5$:

\begin{repcorollary}{pres:cab} Let $\rho\in\aut(F_n)$ be a Nielsen automorphism. Then
$$H_1(C(\rho))\cong \begin{cases}
\mathbb{Z}^2\oplus\mathbb{Z}/2\mathbb{Z} ,&\text{if } n=2,\\
\mathbb{Z}\oplus(\mathbb{Z}/2\mathbb{Z})^3 ,&\text{if } n=3,\\
(\mathbb{Z}/2\mathbb{Z})^3 ,&\text{if } n=4,\\
(\mathbb{Z}/2\mathbb{Z})^2 ,&\text{if } n\ge5.
\end{cases}$$
When $n=2$, the class $\llbracket\rho\rrbracket$ is a primitive element of $\mathbb{Z}^2$, when $n=3$ it is twice a generator of $\mathbb{Z}$, and otherwise $\llbracket\rho\rrbracket=0$.
\end{repcorollary}

Corollary~\ref{pres:cab} has an application to actions of $\aut(F_n)$ on CAT(0) spaces. With such actions, elements with non-zero \emph{translation length} have infinite order in the abelianisation of their centraliser.

\begin{repcorollary}{c:transl}
If $n\ge4$, Nielsen automorphisms always act by zero translation length whenever $\aut(F_n)$ acts isometrically on a proper CAT(0) space.
\end{repcorollary}

This improves on a result of Bridson, who showed the above for $n \geq 6$. Furthermore, Bridson \cite{B:rd} describes actions of $\aut(F_3)$ on CAT(0) spaces where Nielsen automorphisms have positive translation length. Hence the requirement that $n \geq 4$ in this corollary is as strong as possible. 

The authors would like to thank Martin Bridson for his advice and encouragement on this paper. The first author also wants to thank his PhD advisor Carl-Friedrich B\"odigheimer and the International Max Planck Research School for Moduli Spaces (IMPRS) in Bonn, Germany, for the support of his visit at the University of Oxford in 2011. The second author was supported by the EPSRC of Great Britain and the University of Utah. Furthermore, both authors would like to thank the referee for a detailed and considered referee report.

\section{Background}\label{s:background}

This section consists of background material on graphs of groups and their automorphisms. We take most our notation from~\cite{CL}. These concepts are also defined in \cite{B:ct} with slightly different notation.
\subsection{Graphs of groups}

A \emph{graph of groups} $\mathcal{G}$ is a tuple $$\mathcal{G}=(\Gamma, (G_v)_{v \in V(\Gamma)}, (G_e)_{e \in E(\Gamma)}, (f_e)_{e \in E(\G)})$$ such that:

\begin{itemize}
\item $\G$ is a finite, connected graph in the sense of Serre (cf. I \textsection2.1 in \cite{S}) with vertex set $V(\Gamma)$ and edge set $E(\G)$. 
\item Each $G_e$, $G_v$ is a group.
\item If $\tau(e)$ is the terminal vertex of an edge $e$, we have an injective \emph{edge homomorphism} $f_e\co G_e \to G_{\tau(e)}$.
\item For any edge $e$, we have $G_e=G_{\bar{e}}$, where $\bar e$ denotes the edge $e$ with reversed orientation.\end{itemize}

We let $\iota(e)=\tau(\bar{e})$ denote the initial vertex of an edge $e$.

\subsection{The path group and related subsets}

The \emph{path group} of $\mathcal{G}$, denoted $\Pi(\mathcal{G})$, is defined by taking the free group $F$ generated by the letters $(t_e)_{e \in E(\G)}$ and quotienting out the free product $(\ast_{v\in V(\G)} G_v) \ast F$ by the relations:

\begin{itemize}
\item $t_e=t_{\bar{e}}^{-1}$ for all $e \in E(\G)$,
\item $t_ef_e(a)t_e^{-1}=f_{\bar{e}}(a)$ for all $e \in E(\G)$ and $a \in G_e$.
\end{itemize}

We say that an element $g \in \Pi(\mathcal{G})$ is connected if there exists a (possibly trivial) path $e_1,\ldots,e_k$ in $\G$ starting from a vertex $v_0$ and elements $g_0,g_1,\ldots,g_k$ such that $g_0 \in G_{v_0}$, $g_i \in G_{\tau(e_i)}$ for each $i\ge1$ and: $$g=g_0t_{e_1}g_1t_{e_2} \cdots g_{k-1}t_{e_k}g_k.$$ We define $\pi_1(\mathcal{G},v,w)$ to be the set of elements of $\Pi(\mathcal{G})$ represented by connected words whose underlying paths start at $v$ and end at $w$. If $v=w$, the set forms a subgroup of $\Pi(\mathcal G)$ -- the \emph{fundamental group of the graph of groups} -- and is denoted $\pi_1(\mathcal{G},v)$.

Given any element $x$ of a group $G$, let $\ad_x$ be the inner automorphism given by the map $g \mapsto xgx^{-1}$. In this paper, automorphisms always act on the left (so that $\ad_{xy}=\ad_x\ad_y$). If $W \in \pi_1(\mathcal{G},v,w)$ then the restriction of $\ad_W\co \Pi(\mathcal{G}) \to \Pi(\mathcal{G})$ to $\pi_1(\mathcal{G},w)$ induces an isomorphism between $\pi_1(\mathcal{G},w)$ and $\pi_1(\mathcal{G},v)$.

\subsection{Automorphisms of graphs of groups.}

Let $\mathcal{G}$ be a graph of groups. An \emph{automorphism of $\mathcal{G}$} is a tuple of the form $$(H_\G,(H_v)_{v \in V(\G)},(H_e)_{e \in E(\G)}, (\delta(e))_{e \in E(\G)}),$$ where

\begin{itemize}
\item $H_\G\co \G \to \G$ is a graph automorphism,
\item $H_v\co G_v \to G_{H_\G(v)}$ is a group isomorphism,
\item $H_e=H_{\bar{e}}\co G_e \to G_{H_\G(e)}$ is a group isomorphism,
\item $\delta(e)$ is an element of $G_{\tau(H_\G(e))}$,
\end{itemize}

with the additional compatibility requirement that
\begin{equation}\label{hed1}
H_{\tau(e)}(f_e(a))=\delta(e)f_{H_\G(e)}(H_e(a))\delta(e)^{-1}
\end{equation}
for all $e \in E(\G)$ and $a \in G_e$. We shall often look at the case where $H_\G$ is the identity on $\G$ and $H_e$ is the identity map on each edge group. Here the compatability requirement can be phrased in the simpler sense, that:
\begin{equation}\label{hed2} H_{\tau(e)}(f_e(a))=\delta(e)f_e(a)\delta(e)^{-1} \end{equation} for all edges $e \in E(\G)$ and $a \in G_e$. 

\subsection{The automorphism group of $\mathcal{G}$}

The set of automorphisms of $\mathcal{G}$ forms a group, which we call $\aut(\mathcal{G})$. If $H$ and $H'$ are two automorphisms of $\mathcal{G}$, then their product $H''=H\circ H'$ is defined as follows
(for simplicity we write $H(v)$ and $H(e)$ instead of $H_\G(v)$ and $H_\G(e)$): 
\begin{align*} H''_\G&=H_\G H_\G', \\ H_e''&=H_{H'(e)}H'_e,\displaybreak[0]\\ H_v''&=H_{H'(v)}H_v', \\ \delta''(e)&=H_{H'(\tau(e))}(\delta'(e))\delta(H'(e)).\end{align*}

This operation is associative and has an identity element where $H_\G,H_v,H_e$ are all the identity automorphisms and each $\delta(e)$ is trivial. Furthermore, each $H \in \aut(\mathcal{G})$ has an inverse $H^{-1}$ defined by taking $(H^{-1})_\G=(H_\G)^{-1}$, $(H^{-1})_e=(H_{H_\G^{-1}(e)})^{-1}$, $(H^{-1})_v=(H_{H_\G^{-1}(v)})^{-1}$, and $\delta^{-1}(e)=H^{-1}_{H^{-1}(\tau(e))}(\delta(H^{-1}(e))^{-1})$. 

\subsection{The action of $\aut(\mathcal{G})$ on $\Pi(\mathcal{G})$} \label{s:action}

An element $H\in \aut(\mathcal{G})$ induces an automorphism $H_*\co \Pi(\mathcal{G}) \to \Pi(\mathcal{G})$ by taking:
\begin{align*} g &\mapsto H_v(g), &&g \in G_v,\\
t_e &\mapsto \delta(\bar{e})t_{H(e)}\delta(e)^{-1}, &&t_e \in F.
\end{align*}
The map $H_*$ takes connected words to connected words, so for each vertex $v$ of $\Gamma$ there is an induced map $H_{*v}\co \pi_1(\mathcal{G},v) \to \pi_1(\mathcal{G},H_\G(v))$. If $H_\G(v)=v$, then $H_{*{v}} \in \aut(\pi_1(\mathcal{G},v))$. Similarly, if $H_\G(v)=w$, we can choose an element $W \in \pi_1(\mathcal{G},v,w)$ so that $\ad_W H_{*v} \in  \aut(\pi_1(\mathcal{G},v))$. If $W, W' \in \pi_1(\mathcal{G},v,w)$ then $\ad_WH_{*v}$ and $\ad_{W'}H_{*v}$ differ by $\ad_{WW'^{-1}}$ in $\aut(\pi_1(\mathcal{G},v))$, so $H$ determines an element $\widehat{H}$ of $\out(\pi_1(\mathcal{G},v))$. 

We denote by $\aut(\G)$ the group of automorphisms of the graph $\G$, and by $\aut(\G,v)$ the subgroup of graph automorphisms fixing the base vertex $v$. Let $\aut(\mathcal{G},v)$ be the subgroup of $\aut(\mathcal{G})$ consisting of elements such that $H_\G\in\aut(\G,v)$. The next lemma follows from the discussion above and the definition of multiplication in $\aut(\mathcal{G})$.

\begin{lemma}\label{l:aut} The map $H \mapsto \widehat{H}$ induces a homomorphism $$U\co \aut(\mathcal{G}) \to \out(\pi_1(\mathcal{G},v))$$ and the map $H \mapsto H_{*v}$ induces a homomorphism \[ \pushQED{\qed} V\co \aut(\mathcal{G},v) \to \aut(\pi_1(\mathcal{G},v)). \qedhere\popQED \]\end{lemma}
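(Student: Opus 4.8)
The plan is to verify directly that the two assignments $H \mapsto \widehat{H}$ and $H \mapsto H_{*v}$ respect the multiplication in $\aut(\mathcal{G})$, using the explicit formula for the product $H'' = H.H'$ given above together with the description of the induced map $H_*$ on $\Pi(\mathcal{G})$. Since the argument for $V$ is a special case of the argument for $U$ (restricting to the subgroup $\aut(\mathcal{G},v)$, where $H_\G(v)=v$, so no conjugating element $W$ is needed), I would treat $U$ first and then remark that $V$ follows.

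First I would check that the assignment $H \mapsto H_*$ is itself a homomorphism $\aut(\mathcal{G}) \to \aut(\Pi(\mathcal{G}))$, i.e.\ that $(H.H')_* = H_* H'_*$. On vertex-group elements $g \in G_v$ this is immediate from $H''_v = H_{H'(v)} H'_v$. On the generators $t_e$ one computes $H'_*(t_e) = \delta'(\bar e) t_{H'(e)} \delta'(e)^{-1}$, applies $H_*$, and uses the formula $\delta''(e) = H_{H'(\tau(e))}(\delta'(e))\,\delta(H'(e))$ (and the corresponding identity for $\bar e$, noting $\tau(\bar e)=\iota(e)$) to match this against $\delta''(\bar e) t_{H''(e)} \delta''(e)^{-1}$; this is a routine but slightly fiddly bookkeeping step with the $\delta$-cocycle. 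Then, since $H_*$ carries connected words based at $v$ to connected words based at $H_\G(v)$, one gets the induced maps on the $\pi_1$'s and the identity $(H.H')_{*v} = H_{*H'_\G(v)} \circ H'_{*v}$, where the first factor is the map $\pi_1(\mathcal{G},H'_\G(v)) \to \pi_1(\mathcal{G},H_\G(v))$.

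To pass to $\out$, I would fix a choice, for each pair of vertices $v,w$ in the same orbit, of an element $W_{v,w} \in \pi_1(\mathcal{G},v,w)$, and define $\widehat{H}$ as the outer class of $\ad_{W_{v,H_\G(v)}} H_{*v}$; the discussion preceding the lemma shows this is independent of the choice of $W$. For multiplicativity of $U$ one then has to compare $\ad_{W} (H.H')_{*v}$ with $(\ad_{W_1} H_{*v'}) \circ (\ad_{W_2} H'_{*v})$, where $v' = H'_\G(v)$, $W_2 = W_{v,v'}$, $W_1 = W_{v',H_\G(v')}$, and $W$ connects $v$ to $H_\G(v') = (H.H')_\G(v)$. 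Using $(H.H')_{*v} = H_{*v'} \circ H'_{*v}$ and the fact that $H_{*v'}$ conjugates $\pi_1(\mathcal{G},v')$-paths appropriately (so that $H_{*v'} \circ \ad_{W_2} = \ad_{H_{*}(W_2)} \circ H_{*v'}$ as maps on the relevant path sets), the composite $(\ad_{W_1} H_{*v'})(\ad_{W_2} H'_{*v}) = \ad_{W_1 H_*(W_2)} (H.H')_{*v}$, and $W_1 H_*(W_2) \in \pi_1(\mathcal{G}, v, H_\G(v'))$ is a legitimate choice of connecting element, so the two agree in $\out(\pi_1(\mathcal{G},v))$. I would then note that $V$ is simply the restriction of this construction to $\aut(\mathcal{G},v)$ where every $W$ may be taken trivial, giving a genuine homomorphism into $\aut(\pi_1(\mathcal{G},v))$ rather than $\out$.

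The main obstacle I anticipate is purely organisational rather than conceptual: keeping the base points and the direction of the induced isomorphisms straight, and correctly handling the cocycle identity for $\delta''$ when checking $(H.H')_* = H_*H'_*$ on the $t_e$. In particular one must be careful that $H_*$ does not fix $\pi_1(\mathcal{G},v')$ pointwise but maps it isomorphically to $\pi_1(\mathcal{G},H_\G(v'))$, so the "conjugation commutes with $H_*$" step really produces a path $H_*(W_2)$ in a different (but correct) path set; once this is set up cleanly the verification is formal. I would therefore phrase the proof as: (1) $H \mapsto H_*$ is a homomorphism into $\aut(\Pi(\mathcal{G}))$; (2) restriction to connected words based at $v$ is functorial in the appropriate sense; (3) the outer-automorphism class is well-defined and the bookkeeping above shows $U$ and $V$ are homomorphisms.
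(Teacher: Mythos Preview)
Your proposal is correct and is exactly the verification the paper has in mind: the paper does not give a proof at all, but simply remarks that the lemma ``follows from the discussion above and the definition of multiplication in $\aut(\mathcal{G})$,'' which is precisely the bookkeeping you have spelled out. Your check that $(H.H')_* = H_*H'_*$ on the generators $t_e$ via the $\delta''$-formula, followed by the basepoint-and-conjugation argument for $U$, is the intended (and only reasonable) route.
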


The image of the map $U$ in $\out(\pi_1(\mathcal{G},v))$ can be thought of as the automorphisms which fix the splitting of $\pi_1(\mathcal{G},v)$ given by $\mathcal{G}$.  Bass and Jiang \cite{BJ} study $\im U$ by making use of the action of $\pi_1(\mathcal{G},v)$ on the Bass--Serre tree $T_\mathcal{G}$ associated to $\mathcal{G}$. They focus on the situation where the action is \emph{minimal} (so has no invariant subtrees) and non-abelian (the action does not fix any end of $T_\mathcal{G}$). They show the following:

\begin{theorem}[\cite{BJ}, Theorem 6.4] \label{t:BJ}
Suppose that the action of $\pi_1(\mathcal{G},v)$ on the Bass--Serre tree $T_\mathcal{G}$ is minimal and non-abelian. Let $H \in \ker U$. Then:

\begin{enumerate}
\item $H_\G = \id_{\G}$.
\item For each vertex $w \in V(\G)$ and each edge $e \in E(\G)$ we have $H_w= \ad(g_w)$ and $H_e=\ad(g_e)$ for some $g_w \in G_w$ and $g_e \in G_e$.
\end{enumerate}
\end{theorem}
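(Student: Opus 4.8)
The plan is to use the functoriality of Bass--Serre theory. The tree $T_\mathcal{G}$ together with the action of $G:=\pi_1(\mathcal{G},v)$ is built canonically from $\mathcal{G}$, so every $H\in\aut(\mathcal{G})$ induces a graph automorphism $\widetilde{H}\co T_\mathcal{G}\to T_\mathcal{G}$ that is equivariant with respect to $H_{*v}$; after twisting by a path element $W\in\pi_1(\mathcal{G},v,H_\G(v))$ we may take $H_{*v}$ to be an honest automorphism $\theta\in\aut(G)$ and $\widetilde{H}$ to be $\theta$-equivariant, meaning $\widetilde{H}(gx)=\theta(g)\widetilde{H}(x)$. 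Conversely the tuple $(H_\G,(H_w),(H_e),(\delta(e)))$ is recovered from the pair $(\theta,\widetilde{H})$ by recording how $\widetilde{H}$ permutes a fixed transversal of vertices and edges of $T_\mathcal{G}$ and how it conjugates the corresponding stabilisers, and under this dictionary $U(H)=[\theta]\in\out(G)$. So the hypothesis $H\in\ker U$ says precisely that $\theta$ is inner, say $\theta=\ad_\gamma$ for some $\gamma\in G$.

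The first key step is to show that $\widetilde{H}$ is nothing but the action of $\gamma$ on $T_\mathcal{G}$. Writing $\phi$ for the composite of $\widetilde{H}$ with the action of $\gamma^{-1}$, the relation $\theta=\ad_\gamma$ gives $\phi(gx)=g\,\phi(x)$ for all $g\in G$, so $\phi$ is a genuine $G$-equivariant automorphism of $T_\mathcal{G}$. Here the hypotheses enter: for a minimal, non-abelian (hence irreducible) action, the centraliser of $G$ in $\aut(T_\mathcal{G})$ is trivial. One proves this in the standard way: $\phi$ commutes with every hyperbolic $g\in G$, so it preserves $\mathrm{Axis}(g)$ and acts on it as a translation; picking two hyperbolic elements with distinct axes (which exist because the action is irreducible), $\phi$ preserves both axes and the shortest path joining them (or their overlap), hence fixes a point, hence — being a translation along each axis — fixes one whole axis pointwise; by $G$-equivariance $\phi$ then fixes every $G$-translate of that point, and by minimality it fixes all of $T_\mathcal{G}$. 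Thus $\phi=\mathrm{id}$ and $\widetilde{H}$ is the action of $\gamma$.

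Both conclusions now follow by reading the dictionary backwards. The action of $G$ on $T_\mathcal{G}$ descends to the identity on $\G=G\backslash T_\mathcal{G}$, and $H_\G$ is exactly the map that $\widetilde{H}$ induces on this quotient; since $\widetilde{H}$ is translation by $\gamma\in G$, we get $H_\G=1$, which is (1). For (2), fix a vertex $w$ with chosen lift $\widetilde{w}\in T_\mathcal{G}$, so $\mathrm{Stab}(\widetilde{w})=G_w$. Since $\widetilde{H}\widetilde{w}=\gamma\widetilde{w}$ lies in the $G$-orbit of $\widetilde{w}$, we may write $\widetilde{H}\widetilde{w}=h_w\widetilde{w}$ with $h_w\in G$, where $h_w$ is well-defined only modulo $G_w$; the dictionary then computes $H_w(a)=h_w^{-1}\gamma a\gamma^{-1}h_w$ for $a\in G_w$, and since $h_w\in\gamma G_w$ this is conjugation by an element of $G_w$, i.e.\ $H_w=\ad(g_w)$ with $g_w\in G_w$. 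The argument for $H_e=\ad(g_e)$ with $g_e\in G_e$ is identical, using a chosen lift of the edge $e$ and its stabiliser.

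The step demanding the most care is the final paragraph's bookkeeping, together with the triviality of the centraliser of an irreducible action in $\aut(T_\mathcal{G})$. In particular one must check that the residual indeterminacy in the relation $\widetilde{H}\widetilde{w}=h_w\widetilde{w}$ is genuinely an element of the vertex group $G_w$ and not merely of the ambient group $G$, since this is exactly what forces the conjugators $g_w$, $g_e$ to lie in $G_w$, $G_e$ rather than in $\pi_1(\mathcal{G},w)$. I would also flag that the non-abelian hypothesis cannot be dropped: if $G$ fixed an end of $T_\mathcal{G}$, or $T_\mathcal{G}$ were a line, then translations along that line would give nontrivial $G$-equivariant automorphisms and the conclusion would fail. (A purely algebraic proof, manipulating normal forms in $\Pi(\mathcal{G})$ directly, is also possible, but the tree argument above is the one that isolates where minimality and non-abelianness are used.)
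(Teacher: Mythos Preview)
The paper does not prove this statement itself; it is quoted as Theorem~6.4 of Bass--Jiang \cite{BJ} and used as a black box throughout Section~\ref{s:background}. Your argument is essentially the Bass--Jiang proof: lift to the Bass--Serre tree, observe that $H\in\ker U$ produces (after cancelling the inner part $\ad_\gamma$) a $G$-equivariant automorphism $\phi$ of $T_\mathcal{G}$, and then invoke the key lemma that a minimal non-abelian action has trivial centraliser in $\aut(T_\mathcal{G})$; reading the dictionary backwards then gives (1) and (2).

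Your sketch of the centraliser lemma via two hyperbolic axes is the standard one and is correct. The only configuration it does not literally cover is $T_\mathcal{G}$ a line, where all hyperbolic elements share the same axis; but there the non-abelian hypothesis forces a dihedral action, and a translation commuting with a point-reflection is trivial, so $\phi=\mathrm{id}$ still follows. The bookkeeping you flag at the end --- that the conjugators land in $G_w$ and $G_e$ rather than merely in $G$ --- is exactly the content of Bass's covering-theory correspondence between $\aut(\mathcal{G})$ and equivariant tree automorphisms, and your caution there is well placed: it is the one place where a careless argument could produce only ``inner in $G$'' rather than ``inner in the vertex/edge group''.
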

These conditions are not sufficient to determine when $H \in \ker U$, however item (1) implies the following:

\begin{proposition} \label{p:graph}
Suppose that the action of $\pi_1(\mathcal{G},v)$ on the Bass--Serre tree $T_\mathcal{G}$ is minimal and non-abelian. The homomorphism $$\alpha \co \aut(\mathcal{G}) \to \aut(\G)$$ given by $H \mapsto H_\G$ descends to a homomorphism \[ \pushQED{\qed} \bar\alpha \co \im U \to \aut(\G).\qedhere\popQED\]
\end{proposition}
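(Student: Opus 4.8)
The plan is to show that the homomorphism $\alpha$ factors through $U$. Concretely, it suffices to establish the inclusion $\ker U \subseteq \ker \alpha$ of normal subgroups of $\aut(\mathcal{G})$; the universal property of the quotient then yields a (unique) homomorphism $\bar\alpha \co \im U \to \aut(\G)$ with $\bar\alpha \circ U = \alpha$, which is the assertion.

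First I would note that $\alpha$ is genuinely a homomorphism: from the multiplication rule in $\aut(\mathcal{G})$ we have $(H.H')_\G = H_\G H'_\G$, so $\alpha(H.H') = \alpha(H)\alpha(H')$. Similarly $U$ is a homomorphism by Lemma~\ref{l:aut}, and $\ker U$ is a normal subgroup of $\aut(\mathcal{G})$, so the quotient $\aut(\mathcal{G})/\ker U \cong \im U$ makes sense.

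The key step is then immediate from the cited theorem of Bass and Jiang. By hypothesis the action of $\pi_1(\mathcal{G},v)$ on the Bass--Serre tree $T_\mathcal{G}$ is minimal and non-abelian, so Theorem~\ref{t:BJ} applies: for any $H \in \ker U$, item (1) of that theorem gives $H_\G = 1$, that is, $\alpha(H) = 1$. Hence $\ker U \subseteq \ker \alpha$. Consequently, if $H, H' \in \aut(\mathcal{G})$ satisfy $U(H) = U(H')$, then $U(H^{-1}.H') = 1$, so $H^{-1}.H' \in \ker U \subseteq \ker \alpha$, whence $\alpha(H) = \alpha(H')$. Thus $\bar\alpha(U(H)) := \alpha(H)$ is well defined on $\im U$, and it is a homomorphism because $\alpha$ is; moreover $\bar\alpha \circ U = \alpha$ by construction.

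I do not expect a genuine obstacle here: once Theorem~\ref{t:BJ} is available, the entire content of the proposition is carried by its item (1), and what remains is the formal factorisation argument above. The only point meriting a moment's care is to confirm that the standing hypotheses of the proposition---minimality and non-abelianness of the action on $T_\mathcal{G}$---are exactly the hypotheses of Theorem~\ref{t:BJ}, so that it may be invoked verbatim.
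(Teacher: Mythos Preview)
Your proposal is correct and matches the paper's approach exactly: the paper simply states that the proposition is implied by item (1) of Theorem~\ref{t:BJ}, and your argument fills in precisely the routine factorisation details (that $\ker U \subseteq \ker \alpha$ forces $\alpha$ to descend to $\im U$).
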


Let $\aut^0(\mathcal{G})=\{H \in \aut(\mathcal{G}):H_\G=1\}$ be the kernel of the map $\alpha$, and $\im U ^0$ the kernel of the map $\bar \alpha$ (equivalently, $\im U ^0$ is the image of $\aut^0(\mathcal{G})$ in $\out(\pi_1(\mathcal{G},v))$. Item (2) in Theorem \ref{t:BJ} implies:

\begin{proposition}\label{p:BJ2}
Suppose that the action of $\pi_1(\mathcal{G},v)$ on the Bass--Serre tree $T_\mathcal{G}$ is minimal and non-abelian. The homomorphism $$ A \co \aut^0(\mathcal{G}) \to \bigoplus_{w\in V(\G)}\out(G_w) $$ given by $H \mapsto (H_w)_{w \in V(\G)}$ descends to a homomorphism \[ \pushQED{\qed}  \bar A \co \im U ^0 \to \bigoplus_{w\in V(\G)}\out(G_w). \qedhere\popQED\] \end{proposition}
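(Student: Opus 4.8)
The plan is to verify the universal property of quotients: the map $A$ is already defined on $\aut^0(\mathcal{G})$, the group $\im U^0$ is by construction a quotient of $\aut^0(\mathcal{G})$, and so it suffices to check that $A$ vanishes on the kernel of that quotient map.

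First I would record that $A$ is genuinely a homomorphism. If $H,H'\in\aut^0(\mathcal{G})$ then $H_\G=H'_\G=1$, so $H'(w)=w$ for every vertex $w$, and the multiplication formula in $\aut(\mathcal{G})$ gives $(H.H')_w=H_{H'(w)}H'_w=H_wH'_w$. Thus $H\mapsto H_w$ is a homomorphism $\aut^0(\mathcal{G})\to\aut(G_w)$; post-composing with the projection $\aut(G_w)\to\out(G_w)$ and assembling over all $w\in V(\G)$ yields $A$.

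Next I would identify the relevant kernel. By definition $\im U^0$ is the image of $\aut^0(\mathcal{G})$ under $U$, so $\im U^0\cong\aut^0(\mathcal{G})/(\ker U\cap\aut^0(\mathcal{G}))$. Item (1) of Theorem~\ref{t:BJ} says that every $H\in\ker U$ has $H_\G=1$, i.e.\ $\ker U\subseteq\aut^0(\mathcal{G})$; hence $\ker U\cap\aut^0(\mathcal{G})=\ker U$ and $\im U^0\cong\aut^0(\mathcal{G})/\ker U$. The key step is then immediate from item (2) of Theorem~\ref{t:BJ}: if $H\in\ker U$ then $H_w=\ad(g_w)$ for some $g_w\in G_w$, so $H_w$ is an inner automorphism of $G_w$ and is therefore trivial in $\out(G_w)$. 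Consequently $A(H)=0$ for all $H\in\ker U$, so $A$ factors through $\aut^0(\mathcal{G})/\ker U=\im U^0$, producing the homomorphism $\bar A$ as claimed.

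I do not expect a genuine obstacle here: the content is entirely carried by Theorem~\ref{t:BJ}, and the minimality and non-abelianness hypotheses are used only to invoke it. The one point requiring care is the bookkeeping above --- that $\ker U$ is already contained in $\aut^0(\mathcal{G})$, so that restricting $U$ to $\aut^0(\mathcal{G})$ does not shrink its kernel --- together with the elementary observation that inner automorphisms become trivial in the outer automorphism group.
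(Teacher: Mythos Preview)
Your proposal is correct and follows exactly the approach the paper indicates: the paper does not give a separate proof but simply prefaces the proposition with ``Item (2) in Theorem~\ref{t:BJ} implies'', and your argument spells out precisely that implication. Your additional observation that item~(1) forces $\ker U\subseteq\aut^0(\mathcal{G})$ is correct but not strictly needed---it suffices that $A$ vanishes on $\ker U\cap\aut^0(\mathcal{G})$, which item~(2) already guarantees.
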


The kernel of the map $\bar A$ can still be quite complicated, and is described by Bass and Jiang by a 4 term filtration. However, in our situation the kernel will be much simpler, and is describable only in terms of \emph{Dehn twist automorphisms}, which we discuss in the next section.

\subsection{Dehn twist automorphisms.} \label{s:dtdef}

\begin{definition}
An automorphism $D$ of a graph of groups $\mathcal{G}$ is called \emph{Dehn twist} if:
\begin{itemize}
\item $D_\G=\id_{\G}$,
\item $D_w=\id_{G_w}$ for all $w\in V(\G)$,
\item $D_e=\id_{G_e}$ for all $e\in E(\G)$,
\item there is an element $\gamma_e$ in the centre $Z(G_e)$ of each edge group such that $\delta(e)=f_e(\gamma_e)$.
\end{itemize}
\end{definition}

Every collection $(\gamma_e)_{e\in E(\G)}$ with each $\gamma_e\in Z(G_e)$ defines a Dehn twist. To see this, we have to verify the compatability condition \eqref{hed2} on page \pageref{hed2}. As $D$ has trivial vertex group automorphisms and $\gamma_e \in Z(G_e)$, we have:
$$H_{\tau(e)}(f_e(a))=f_e(a)=f_e(\gamma_e)f_e(a)f_e(\gamma_e)^{-1}=\delta(e)f_e(a)\delta(e)^{-1},$$
for any $a \in G_e$, as required. We say that the element $z_e=\g_e\g_{\bar{e}}^{-1}$ is the \emph{twistor} of the edge $e$. It is easy to verify that Dehn twists form a subgroup of $\aut(\mathcal{G})$.

\begin{definition}
Let $G$ be any group. An element $\phi\in\aut(G)$ or $\out(G)$ is a Dehn twist automorphism if there exists a graph of groups $\mathcal{G}$ and an isomorphism $\rho:G\to\pi_1(\mathcal{G},v)$ such that $\rho\phi\rho^{-1}$ is represented by a Dehn twist on $\mathcal{G}$.
\end{definition}

\begin{remark}\label{rm:dteq}
Our definition of Dehn twist here coincides with the notion of Dehn twist in \cite{CL} defined by a set of twistors $(z_e)_{e \in E}$ such that each $z_e \in Z(G_e)$ and $z_e=z_{\bar{e}}^{-1}$. Conversely, if we are given a set of twistors $(z_e)_{e\in E(\G)}$ such that each $z_e\in Z(G_e)$ and $z_{\bar e}=z_e^{-1}$,  we may take an orientation $E^+$ of $\G$ (a subset of $E(\G)$ such that for every edge $e$ exactly one element of $\{e,\bar{e}\}$ lies in $E^+$), and define a Dehn twist in our sense by taking
$$\gamma_e=\begin{cases}
z_e^{-1},&\text{if }e\in E^+,\\
1,&\text{if }e\notin E^+.
\end{cases}$$
\end{remark}

\subsection{The subgroup of Dehn twists in $\aut(\mathcal{G})$.}\label{s:dt2}

\begin{definition}Let $DA(\mathcal{G})$ and $DO(\mathcal{G})$ be the images of the subgroup of Dehn twists in $\aut(\mathcal{G})$ in $\aut(\pi_1(\mathcal{G},v))$ and $\out(\pi_1(\mathcal{G},v))$ respectively.\end{definition}

To look at these groups, we recall the following proposition from \cite{CL}:

\begin{proposition}[\cite{CL}, Proposition 5.4] \label{p:dt} Let $\mathcal{G}$ be a graph of groups with the property:\\
$(*)$ for every edge $e \in E(\G)$ there is an element $r_e \in G_{\tau(e)}$ with $$f_e(G_e) \cap r_ef_e(G_e)r_e^{-1} = 1.$$ Then two Dehn twists $D=(\mathcal{G},(\g_e)_{e \in E(\G)})$ and $D'=(\mathcal{G}, (\g_e')_{e \in E(\G)})$ with twistors $z_e=\g_e\g_{\bar{e}}^{-1}$ and $z_e'=\g_e'\g^{'-1}_{\bar{e}}$ determine the same outer automorphism of $\pi_1(\mathcal{G},v)$ if and only if $z_e=z'_e$ for all $e \in E(\G)$. 
\end{proposition}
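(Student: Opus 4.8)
The statement is an ``if and only if''; the converse (equal twistors give equal outer automorphisms) holds for every graph of groups, so I would dispose of it first and reserve $(*)$ for the forward direction. For the converse, the plan is to compute the automorphism $D_*$ of the path group $\Pi(\mathcal{G})$ induced by a Dehn twist $D=(\mathcal{G},(\g_e)_{e\in E(\G)})$. Since $D_\G$, the vertex maps and the edge maps are all trivial, $D_*$ fixes every vertex group pointwise and
$$t_e\longmapsto \delta(\bar e)\,t_e\,\delta(e)^{-1}=f_{\bar e}(\g_{\bar e})\,t_e\,f_e(\g_e)^{-1}.$$
Writing $\g_e=z_e\g_{\bar e}$ (legitimate since $\g_e,\g_{\bar e}\in Z(G_e)$) and applying the defining relation $t_ef_e(a)t_e^{-1}=f_{\bar e}(a)$ with $a=\g_{\bar e}$, the right-hand side collapses to $t_ef_e(z_e)^{-1}$. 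Hence $D_*$ depends only on the twistors, so $z_e=z_e'$ for all $e$ forces $D_*=D_*'$ on $\Pi(\mathcal{G})$, whence $D_{*v}=D_{*v}'$ and in particular $\widehat D=\widehat{D'}$.

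For the forward direction, suppose $\widehat D=\widehat{D'}$. A short computation with the multiplication in $\aut(\mathcal{G})$ shows that $E:=D'.D^{-1}$ is again a Dehn twist, with edge element $f_e(\g_e^{-1}\g_e')$ and hence twistor $z_e^{-1}z_e'$ on each $e$, and that $E_{*v}=\ad_{W_0}$ for some $W_0\in\pi_1(\mathcal{G},v)$. So it suffices to prove: \emph{a Dehn twist $E$ on a graph of groups satisfying $(*)$ that induces an inner automorphism of $\pi_1(\mathcal{G},v)$ has all twistors trivial.} The key input is a centraliser lemma: \emph{if $(*)$ holds and $G_w\neq1$, the centraliser of $G_w$ in $\pi_1(\mathcal{G},w)$ is $Z(G_w)$.} I would prove this on the Bass--Serre tree $T_\mathcal{G}$, with $G_w=\mathrm{Stab}(\tilde w)$: if $g$ centralises $G_w$ then $G_w\le C(g)$ preserves $\mathrm{Fix}(g)$ or $\mathrm{Axis}(g)$; in the hyperbolic case $G_w$ cannot reverse the axis (that would give $g^2=1$), so it fixes it pointwise, and in either case, unless $g$ already fixes $\tilde w$, the group $G_w$ fixes an edge $\tilde e$ incident to $\tilde w$, whence $G_w=\mathrm{Stab}(\tilde e)=hf_e(G_e)h^{-1}$ for some $h\in G_w$, forcing $f_e(G_e)=G_w$ — which $(*)$ forbids once $G_w\neq1$. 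So $g\in G_w$.

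I would then fix a spanning tree $Y$ of $\G$, let $P_w\in\pi_1(\mathcal{G},v,w)$ be the $Y$-path from $v$, and set $X_w:=E_*(P_w)^{-1}W_0P_w\in\pi_1(\mathcal{G},w)$ (so $X_v=W_0$). Evaluating $E_{*v}=\ad_{W_0}$ on $P_wgP_w^{-1}$ for $g\in G_w$ shows $X_w$ centralises $G_w$, hence $X_w\in Z(G_w)$ by the centraliser lemma (a vertex with $G_w=1$ has only trivial edge groups at it, hence trivial twistors, so is vacuous). Comparing the two expressions for $E_*(P_{\tau(e)})$ across a tree edge $e$, and evaluating $E_{*v}$ on the loop $P_{\iota(e)}t_eP_{\tau(e)}^{-1}$ for a non-tree edge, and using $E_*(t_e)=t_ef_e(z_e)^{-1}$ together with the standard normal-form fact that $t_e^{-1}X_{\iota(e)}t_e$ lands in $G_{\tau(e)}$ only when $X_{\iota(e)}\in f_{\bar e}(G_e)$, one finds in every case that $X_{\tau(e)}\in f_e(G_e)$. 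Since $X_{\tau(e)}$ is central in $G_{\tau(e)}$ it commutes with the conjugator $r_e$ from $(*)$, so $X_{\tau(e)}\in f_e(G_e)\cap r_ef_e(G_e)r_e^{-1}=1$; thus $X_w=1$ for all $w$. Feeding $X_{\iota(e)}=X_{\tau(e)}=1$ back into the edge relations and using injectivity of the edge homomorphisms then yields $z_e=1$ for every $e$, i.e.\ $z_e=z_e'$.

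The delicate point is the way $(*)$ enters: it supplies only \emph{one} good conjugator $r_e$ per edge, so to use it one must first arrange that the tested element (here $X_{\tau(e)}$) is central in $G_{\tau(e)}$, so that conjugation by $r_e$ fixes it; merely knowing ``$X_w$ centralises $G_w$'' would not place $X_w$ inside an edge group after intersecting with $r_ef_e(G_e)r_e^{-1}$. This is why the centraliser lemma, which forces $X_w\in Z(G_w)$, is the real engine, and its proof (especially ruling out the reflecting subcase in the hyperbolic situation) is the one genuinely geometric step. The rest — the spanning-tree bookkeeping, the normal-form fact, and the handling of edges or vertices with trivial groups — is routine.
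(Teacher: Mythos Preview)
The present paper does not give its own proof of this proposition; it is quoted from Cohen--Lustig \cite{CL} as a background result and used as a black box. So there is no in-paper argument to compare against, and your proposal should be assessed on its own merits.

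As such, your approach is sound and essentially complete. The computation $D_*(t_e)=t_ef_e(z_e)^{-1}$ is correct and immediately gives the ``if'' direction. For the ``only if'' direction, the reduction to showing that an inner Dehn twist has trivial twistors is the right move, and the centraliser lemma (proved on the Bass--Serre tree) is indeed the engine: your argument that $G_w$ fixing an edge forces $f_e(G_e)=G_w$, which $(*)$ forbids for $G_w\neq1$, is correct, as is the hyperbolic subcase. The spanning-tree bookkeeping, the normal-form step forcing $X_{\iota(e)}\in f_{\bar e}(G_e)$, and the final use of $(*)$ via centrality of $X_{\tau(e)}$ all check out.

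One point deserves more care than your one-line dismissal: vertices with $G_w=1$. The centraliser lemma gives no information there, so a priori $X_w$ is an arbitrary element of $\pi_1(\mathcal G,w)$, and you need $X_w\in G_w$ to run the normal-form argument across the adjacent edge. Saying such vertices are ``vacuous'' because their incident twistors vanish is not enough: you still have to propagate the argument through them. This is easy to repair --- all incident edge groups are trivial, so the edge relations reduce to $X_w=t_e^{-1}X_{w'}t_e$, and combining the constraints at the nearest vertices with nontrivial groups forces (again by normal form) the intervening $X$'s to be trivial --- but it should be said explicitly rather than waved away.
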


If each edge group $G_e \cong \mathbb{Z}$, then if we take an orientation $E^+$ of $E(\G)$ (as in Remark \ref{rm:dteq}), we have $\mathbb{Z}^{|E^+|}$ choices for Dehn twists on $\mathcal{G}$ with distinct image in $\out(\pi_1(\mathcal{G},v))$. Multiplication of two Dehn twists is given by multiplying the twistors on each edge. Therefore Proposition~\ref{p:dt} implies:

\begin{proposition}\label{p:dt2} Let $\mathcal{G}$ be a graph of groups with property $(*)$ such that each edge group $G_e \cong \mathbb{Z}$. Then $DA(\mathcal{G})$ and $DO(\mathcal{G})$ are free abelian groups of rank equal to the number of geometric (or unoriented) edges of $\G$, i.e. the size of an orientation of $E(\G)$. \qed
\end{proposition}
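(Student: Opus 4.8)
The plan is to identify both $DA(\mathcal G)$ and $DO(\mathcal G)$, via the twistor map, with the group of all admissible twistor systems. Write $\mathcal D$ for the subgroup of Dehn twists in $\aut(\mathcal G)$ (note $\mathcal D\subseteq\aut(\mathcal G,v)$ since $D_\G=1$) and set
$$Z=\Bigl\{(z_e)_{e\in E(\G)}\ :\ z_e\in Z(G_e),\ z_{\bar e}=z_e^{-1}\Bigr\}\ \le\ \prod_{e\in E(\G)}Z(G_e).$$
First I would record that for any orientation $E^+$ of $\G$ the restriction $(z_e)_{e\in E(\G)}\mapsto(z_e)_{e\in E^+}$ is an isomorphism $Z\xrightarrow{\ \sim\ }\prod_{e\in E^+}Z(G_e)$, with inverse extending a family on $E^+$ by $z_{\bar e}:=z_e^{-1}$. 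Since $G_e\cong\mathbb Z$ is abelian, $Z(G_e)=G_e\cong\mathbb Z$, so $Z\cong\mathbb Z^{|E^+|}$ is free abelian of rank equal to the number of geometric edges of $\G$; it then remains to identify $DA(\mathcal G)$ and $DO(\mathcal G)$ with $Z$.

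Next I would check that $\tau\co\mathcal D\to Z$, $D\mapsto(z_e^D)_{e\in E(\G)}$ (where $z_e^D$ is the twistor of $e$ for $D$), is a surjective homomorphism. For surjectivity, given $(z_e)\in Z$ the Dehn twist with $\gamma_e=z_e$ for $e\in E^+$ and $\gamma_e=1$ otherwise has twistor system $(z_e)$. That $\tau$ is a homomorphism comes directly from the multiplication rule in $\aut(\mathcal G)$: when $D,D'\in\mathcal D$, all their graph, vertex and edge components are trivial, so in $D.D'$ one has $\delta''(e)=\delta'(e)\delta(e)=f_e(\gamma'_e\gamma_e)$; since $\gamma_e,\gamma_{\bar e}\in Z(G_e)=Z(G_{\bar e})$ commute, $z_e^{D.D'}=z_e^{D}z_e^{D'}$ (which also shows $\mathcal D$ is itself abelian).

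The substance is then to show that the kernels of $V\co\mathcal D\to\aut(\pi_1(\mathcal G,v))$ and of $U\co\mathcal D\to\out(\pi_1(\mathcal G,v))$ both equal $\ker\tau$. For $U$ this is exactly Proposition~\ref{p:dt}, which applies as $\mathcal G$ has property $(*)$: two Dehn twists agree in $\out(\pi_1(\mathcal G,v))$ precisely when their twistor systems agree, so $DO(\mathcal G)=U(\mathcal D)\cong\mathcal D/\ker\tau\cong Z$. For $V$ the inclusion $\ker V\subseteq\ker U=\ker\tau$ is automatic (trivial automorphism forces trivial outer class), and for the reverse I would compute directly on the path group: if $D\in\ker\tau$, i.e.\ $\gamma_e=\gamma_{\bar e}$ for all $e$, then $D_*$ fixes each $G_v$ pointwise while
$$D_*(t_e)=f_{\bar e}(\gamma_{\bar e})\,t_e\,f_e(\gamma_e)^{-1}=t_ef_e(\gamma_e)t_e^{-1}\cdot t_ef_e(\gamma_e)^{-1}=t_e$$
by the defining relation $t_ef_e(a)t_e^{-1}=f_{\bar e}(a)$ of $\Pi(\mathcal G)$; hence $D_*=\mathrm{id}$ and $D\in\ker V$. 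Thus $DA(\mathcal G)=V(\mathcal D)\cong Z$ as well, and combining with the first paragraph proves the proposition.

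The only nontrivial input is Cohen and Lustig's Proposition~\ref{p:dt}, which is assumed; the rest is bookkeeping. The one point that genuinely needs an argument rather than a citation is the reverse inclusion $\ker\tau\subseteq\ker V$ — that a twistor-trivial Dehn twist is already trivial in $\aut(\pi_1(\mathcal G,v))$, not merely in $\out(\pi_1(\mathcal G,v))$ — so I would regard that short path-group computation, together with keeping the $\aut$/$\out$ distinction straight throughout, as the main thing to get right.
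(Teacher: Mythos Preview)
Your proof is correct and follows essentially the same approach as the paper: identify the group of Dehn twists, via the twistor map, with $\prod_{e\in E^+}Z(G_e)\cong\mathbb Z^{|E^+|}$, using Proposition~\ref{p:dt} for the $\out$ statement. Your treatment is in fact more careful than the paper's, which gives only a one-sentence argument and does not separately verify the $DA(\mathcal G)$ case; your explicit path-group computation showing $\ker\tau\subseteq\ker V$ fills that small gap.
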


Condition $(*)$ is satisfied if $f_e(G_e)$ is a proper malnormal subgroup of $G_{\tau(e)}$ for all $e$ (a subgroup $H$ of a group $G$ is malnormal if $gHg^{-1}\cap H = 1$ unless $g \in H$). With some further restrictions, the kernel of the map $\bar{A}$ defined in Proposition \ref{p:BJ2} can be given in terms of Dehn twists.

\begin{theorem}\label{t:vertex}
Suppose that the action of $\pi_1(\mathcal{G},v)$ on the Bass--Serre tree $T_\mathcal{G}$ is minimal and non-abelian. Further  suppose that the centre of $\pi_1(\mathcal{G},v)$ is trivial and the image of each edge group in an adjacent vertex group is malnormal. Then the kernel of the map $$\bar{A} \co \im U^0 \to \bigoplus_{w\in V(\G)}\out(G_w)$$ given in Proposition \ref{p:BJ2} is equal to $DO(\mathcal{G})$.
\end{theorem}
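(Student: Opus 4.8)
The plan is to combine Theorem~\ref{t:BJ} with Proposition~\ref{p:dt2}. First I would observe that the hypotheses of the present theorem imply those of Theorem~\ref{t:BJ} and Proposition~\ref{p:BJ2}: the action on $T_\mathcal{G}$ is minimal and non-abelian, and malnormality of the edge images (together with $f_e$ non-surjective, which follows because the centre of $\pi_1(\mathcal{G},v)$ is trivial so no edge group can equal an adjacent vertex group) gives condition $(*)$, so $DO(\mathcal{G})$ is free abelian of the stated rank by Proposition~\ref{p:dt2}. Next I would check the easy inclusion $DO(\mathcal{G}) \subseteq \ker \bar A$: a Dehn twist $D$ has $D_\G = 1$, so it lies in $\aut^0(\mathcal{G})$, and $D_w = 1$ for every vertex $w$, so its image under $A$ (hence under $\bar A$) is trivial.

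For the reverse inclusion, take $H \in \aut^0(\mathcal{G})$ whose image $\widehat H \in \im U^0$ lies in $\ker \bar A$; I want to show $\widehat H \in DO(\mathcal{G})$. Since $\bar A(\widehat H) = 1$, each $H_w$ is an inner automorphism of $G_w$, say $H_w = \ad(g_w)$ for some $g_w \in G_w$. The strategy is to modify $H$ by an element of $\ker U$ so that the vertex automorphisms become trivial: by Theorem~\ref{t:BJ}(2), elements of $\ker U$ themselves have each coordinate automorphism inner, and by composing $H$ with a suitable element of $\ker U$ (realising the correction $\ad(g_w^{-1})$ at each vertex, which is possible because such tuples of inner automorphisms extend to genuine automorphisms of $\mathcal{G}$ via an appropriate choice of the $\delta(e)$) I can assume $H_w = 1$ for all $w$ while keeping $\widehat H$ unchanged. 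Similarly $H_e$ is an automorphism of the cyclic group $G_e \cong \mathbb{Z}$; the only possibilities are $\pm 1$, and I would argue that the compatibility condition \eqref{hed1} forces $H_e = 1$ once the vertex automorphisms are trivial and malnormality is in force (a nontrivial twistor cannot conjugate $f_e(G_e)$ to its inverse inside a vertex group with malnormal edge image). At that point $H$ satisfies $H_\G = 1$, $H_w = 1$, $H_e = 1$, so the only remaining data are the elements $\delta(e)$, and compatibility \eqref{hed2} says $\delta(e)$ centralises $f_e(G_e)$; malnormality of $f_e(G_e)$ in $G_{\tau(e)}$ then forces $\delta(e) \in f_e(G_e)$, i.e. $\delta(e) = f_e(\gamma_e)$ for some $\gamma_e \in G_e = Z(G_e)$. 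Hence $H$ is a Dehn twist and $\widehat H \in DO(\mathcal{G})$.

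The main obstacle I anticipate is the bookkeeping in the middle step: when I compose $H$ with an element of $\ker U$ to kill the vertex automorphisms, I need to know precisely which tuples $(\ad(h_w))_w$ of inner vertex automorphisms are realised by automorphisms of $\mathcal{G}$ lying in $\ker U$, and to track how the $\delta(e)$ transform under this composition (using the multiplication formula $\delta''(e) = H_{H'(\tau(e))}(\delta'(e))\delta(H'(e))$) so that I can still conclude $\delta(e) \in f_e(G_e)$ at the end. This is where Bass--Jiang's analysis of $\ker U$ is doing the real work, and care is needed because the correction at a vertex interacts with the edges incident to it; I expect to phrase this cleanly by first reducing to $H_w = 1$ and $H_e = 1$ and only then analysing the residual $\delta(e)$, rather than trying to do everything simultaneously.
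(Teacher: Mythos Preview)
Your approach is genuinely different from the paper's. The paper does not carry out the hands-on correction argument you sketch; it simply invokes the filtration $H^{(V)} \rhd H^{(V,E)} \rhd H^{(V,E]} \rhd H^{(V,EZ]}$ from Theorem~8.1 of Bass--Jiang \cite{BJ} and checks that malnormality collapses the first two quotients while trivial centre kills $H^{(V,EZ]}$, leaving $H^{(V,E]}=DO(\mathcal{G})$. Your direct argument is more self-contained and illuminating, and for the applications later in the paper (where $G_e\cong\mathbb{Z}$) it works.

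However, there is a genuine gap for the theorem \emph{as stated}: you assume each edge group is infinite cyclic when you write ``$H_e$ is an automorphism of the cyclic group $G_e\cong\mathbb{Z}$; the only possibilities are $\pm1$.'' The hypotheses of Theorem~\ref{t:vertex} do not include this. Without it, after your first correction (making $H_w=1$) the compatibility relation only gives $f_e(a)=\delta(e)f_e(H_e(a))\delta(e)^{-1}$; malnormality then forces $\delta(e)\in f_e(G_e)$ and hence $H_e$ is \emph{inner}, but not necessarily trivial, and the resulting $\gamma_e$ need not lie in $Z(G_e)$. To finish in general you would need a second correction killing the inner edge automorphisms, and you would have to check that this correction itself lies in $\ker U$ (equivalently, that its induced map on $\pi_1(\mathcal{G},v)$ is inner). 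This second step, together with the role of the trivial-centre hypothesis, is exactly what Bass--Jiang's filtration is packaging; your proposal does not address it.

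Two smaller points. First, your claim that ``$f_e$ non-surjective\ldots follows because the centre of $\pi_1(\mathcal{G},v)$ is trivial'' is not correct in general (a surjective edge map at one vertex of a large graph need not produce central elements), but this is tangential since the theorem does not require you to know $DO(\mathcal{G})$ is free abelian. Second, when you compose with an element of $\ker U$ to kill the $H_w$, you should verify that the explicit automorphism you build (with $K_w=\ad(g_w^{-1})$, $K_e=1$, $\delta_K(e)=g_{\tau(e)}^{-1}$) really does induce an inner automorphism of $\pi_1(\mathcal{G},v)$; this is an easy direct computation, but Theorem~\ref{t:BJ} alone, which you cite, only gives the converse implication.
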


\begin{proof}
We give a brief description of how this is implied by Theorem 8.1 of \cite{BJ}. Bass and Jiang  use the notation $\ker \bar{A}=H^{(V)}$ and describe this group in terms of a chain  of normal subrgoups: $$H^{(V)} \rhd H^{(V,E)} \rhd H^{(V,E]} \rhd H^{(V,EZ]}.$$ The quotient $H^{(V)}/H^{(V,E)}$ is described by non-inner automorphisms of $G_e$ induced by conjugation by its normalisers in the two adjacent vertex groups (part (5) of \cite[Theorem 8.1]{BJ}). As $G_e$ is malnormal in each vertex group its image is equal to its normaliser and the quotient $H^{(V)}/H^{(V,E)}$ is trivial. The quotient $H^{(V,E)}/H^{(V,E]}$ is trivial if the centraliser of $f_e(G_e)$ in $G_{\tau(e)}$ is equal to the centre of $f_e(G_e)$ (part (6) of \cite[Theorem 8.1]{BJ}). This is also implied by the malnormality of $f_e(G_e)$ in $G_{\tau(e)}$.
Finally, if the centre of $\pi_1(\mathcal{G},v)$ is trivial, then the subgroup $H^{(V,EZ]}$ is trivial (part (8) of \cite[Theorem 8.1]{BJ}).  The subgroup $H^{(V,E]}$ defined in Section 7.4 of \cite{BJ} coincides with $DO(\mathcal{G})$. 
\end{proof}

We will be looking at a particular class of graphs of groups which satisfy all the above hypotheses.

\section{Centralisers of efficient Dehn twists in $\outn$}\label{s:cdt}

Cohen and Lustig give a notion of when a Dehn twist is \emph{efficient}. This may be thought of as when the graph of groups $\mathcal{G}$ that the Dehn twist is defined on is, in a certain sense, optimal.

\begin{definition}\label{d:bonding}
Let $D$ be the Dehn twist given by $(\mathcal{G},(\gamma_e)_{e\in E(\G)})$ with twistors $z_e=\g_e\g_{\bar e}^{-1}$. Two edges $e'$ and $e''$ with common terminal vertex $w$ are called
\begin{itemize}
\item {\em positively bonded}, if there exist $m,n\ge1$ such that $f_{e'}(z_{e'}^m)$ and $f_{e''}(z_{e''}^n)$ are conjugate in $G_w$,
\item {\em negatively bonded}, if there exist $m\ge1$ and $n\le-1$ such that $f_{e'}(z_{e'}^m)$ and $f_{e''}(z_{e''}^n)$ are conjugate in $G_w$.
\end{itemize}
\end{definition}

\begin{definition}[cf. Definition 6.2 in \cite{CL}]\label{efficient}
A Dehn twist $D$ given by $(\mathcal{G},(\gamma_e)_{e\in E(\G)})$ is called \emph{efficient} if:
\begin{enumerate}
\item $\mathcal{G}$ is \emph{minimal:} If $w$ has valence one and $w=\tau(e)$, then the edge map $f_e\co G_e\rightarrow G_w$ is not surjective.
\item There is \emph{no invisible vertex:} There is no 2-valent vertex $w$ such that both edge maps $f_{e_i}\co G_{e_i}\rightarrow G_w$ are surjective, where $\tau(e_1)=\tau(e_2)=w$ and $e_1\neq e_2$.
\item \emph{No unused edges:} For every edge $e$, we have $z_e \neq 0$, or equivalently $\gamma_{\bar e}\neq\gamma_e$.
\item \emph{No proper powers:} If $r^p\in f_e(G_e)$ for some $p\neq0$, then $r\in f_e(G_e)$.
\item Whenever $w=\tau(e_1)=\tau(e_2)$, then $e_1$ and $e_2$ are not \emph{positively bonded}.
\end{enumerate}
\end{definition}

Suppose that $\pi_1(\mathcal{G},v)\cong F_n$ and $D$ is an efficient Dehn twist on $\mathcal{G}$. The above conditions imply that each edge group is infinite cyclic and each vertex group is free of rank at least two (see Proposition 6.4 of \cite{CL} for more detail). In particular, no edge map of $\mathcal{G}$ is surjective. The action on the Bass-Serre tree $T_\mathcal{G}$ is minimal and non-abelian, and $\mathcal{G}$ satisfies condition $(*)$ of Proposition~\ref{p:dt}.

\subsection{Representing centralisers by abstract automorphisms}

We shall now restrict ourselves to the case where $\pi_1(\mathcal{G},v)$ is a free group of rank $n\ge2$, and $D=(\mathcal{G},(\gamma_e)_{e\in\G})$ is an efficient Dehn twist defined on $\mathcal{G}$ (we shall later see that every Dehn twist automorphism has an efficient representative). We fix an isomorphism $F_n\cong\pi_1(\mathcal{G},v)$, so that $D_{*v}$ and $\widehat{D}$ are identified with elements of $\aut(F_n)$ and $\out(F_n)$ respectively.

The action of $F_n$ on the Bass--Serre tree $T_\mathcal{G}$ of $\mathcal{G}$ is minimal and very small, and so by endowing the edges of $\mathcal{G}$ with varying lengths, it defines a simplex fixed by $\widehat{D}$ in the boundary of Outer space (this is described in detail in \cite{CL2}). Cohen and Lustig proved that this simplex contains all simplicial actions fixed by $\widehat{D}$ in the boundary of Outer space, and use this to show that elements of $C(\widehat{D})$ may be represented by elements of  $\aut(\mathcal{G})$. It is not true that the image of every element of $\aut(\mathcal{G})$ in $\outn$ lies in $C(\widehat{D})$. For this reason we need to pass to a subgroup:

\begin{definition}
We define $$\aut(\mathcal{G},\widehat{D})=\{H \in \aut(\mathcal{G}) :H_e(z_e)=z_{H(e)} \text{ for all } e \in E(\G) \} . $$ This is the subgroup of $\aut(\mathcal{G})$ consisting of elements $H$ which preserve the twistors of $D$. We will also need to look at the subgroup of this group preserving the basepoint, which we define by: $$\aut(\mathcal{G},D_{*v})=\{H \in \aut(\mathcal{G},\widehat{D}): H_\G(v)=v \}.$$ 
\end{definition}

Proposition 7.1 of \cite{CL} may be rephrased as follows:

\begin{proposition}\label{p:autgd}
Let $D=(\mathcal{G},(\g_e)_{e \in E(\G)})$ be an efficient Dehn twist. The maps $H \mapsto \widehat{H}$ and $H\mapsto H_{*v}$ induce surjective homomorphisms
\begin{align*}\outgd&\to C(\widehat{D}),\\
\aut(\mathcal G, D_{*v})&\to C(D_{*v}).
\end{align*}\end{proposition}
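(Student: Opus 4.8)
The plan: since both maps are restrictions of the homomorphisms $U$ and $V$ of Lemma~\ref{l:aut}, they are automatically homomorphisms, and the statement — which is essentially \cite[Proposition 7.1]{CL} rewritten in the present notation — amounts to checking that their images land in the stated centralisers and that they are surjective. I would first dispose of the inclusions. Given $H\in\aut(\mathcal{G},\widehat{D})$, a direct calculation with the multiplication formulas in $\aut(\mathcal{G})$ shows that the conjugate $HDH^{-1}$ again has trivial graph map, trivial vertex automorphisms and trivial edge automorphisms, so it is a Dehn twist on $\mathcal{G}$; unwinding the formula for $\delta$ of a product, its twistor on an edge $e$ works out to be $H_{H^{-1}(e)}(z_{H^{-1}(e)})$, which equals $z_e$ precisely because $H$ preserves twistors. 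On the other hand the action of a Dehn twist on $\Pi(\mathcal{G})$ fixes every vertex group and sends $t_e\mapsto f_{\bar e}(z_{\bar e})t_e$, so it depends only on the twistors; hence $(HDH^{-1})_*=D_*$ on $\Pi(\mathcal{G})$, and restricting to $\pi_1(\mathcal{G},v)$ gives $\widehat{H}\widehat{D}\widehat{H}^{-1}=\widehat{D}$ and, when $H_\G(v)=v$, also $H_{*v}D_{*v}H_{*v}^{-1}=D_{*v}$. So $\aut(\mathcal{G},\widehat{D})$ maps into $C(\widehat{D})$ and $\aut(\mathcal{G},D_{*v})$ maps into $C(D_{*v})$.

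The surjectivity onto $C(\widehat{D})$ is the heart of the matter, and I would obtain it from Cohen and Lustig's analysis of the fixed set of $\widehat{D}$ in the boundary of Outer space. Endowing the edges of $\mathcal{G}$ with varying lengths, the Bass--Serre tree $T_\mathcal{G}$ sweeps out a simplex $\sigma$ of projectivised $F_n$-trees fixed by $\widehat{D}$, and by \cite{CL2} every simplicial action fixed by $\widehat{D}$ already lies in the closure of $\sigma$. If $\phi\in C(\widehat{D})$ then $\phi\cdot\sigma$ is again a simplex of $\widehat{D}$-fixed simplicial trees, hence contained in the closure of $\sigma$, and since $\phi$ preserves dimension this forces $\phi\cdot\sigma=\sigma$; this says that $\phi$ carries $T_\mathcal{G}$ to an $F_n$-equivariantly homothetic tree with quotient graph of groups $\mathcal{G}$, and unwinding this through the marking $F_n\cong\pi_1(\mathcal{G},v)$ yields $\phi=\widehat{H}$ for some $H\in\aut(\mathcal{G})$. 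It remains to check that such an $H$ preserves twistors. As above, $HDH^{-1}$ is a Dehn twist on $\mathcal{G}$ with twistor $H_{H^{-1}(e)}(z_{H^{-1}(e)})$ on the edge $e$, and $\widehat{HDH^{-1}}=\widehat{H}\widehat{D}\widehat{H}^{-1}=\widehat{D}$ because $\phi=\widehat{H}$ centralises $\widehat{D}$; since $\mathcal{G}$ satisfies condition $(*)$ of Proposition~\ref{p:dt}, that proposition forces the twistors of $HDH^{-1}$ and $D$ to agree, i.e. $H_{H^{-1}(e)}(z_{H^{-1}(e)})=z_e$ for every $e$, equivalently $H_f(z_f)=z_{H(f)}$ for all $f$. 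Hence $H\in\aut(\mathcal{G},\widehat{D})$, proving surjectivity onto $C(\widehat{D})$.

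For the basepoint statement, take $\phi\in C(D_{*v})\subseteq\aut(\pi_1(\mathcal{G},v))$; its outer class lies in $C(\widehat{D})$, so by the previous paragraph it is realised by some $H\in\aut(\mathcal{G},\widehat{D})$. I would then appeal to the part of Cohen and Lustig's argument dealing with base vertices in order to adjust $H$, within $\aut(\mathcal{G},\widehat{D})$, to an element with $H_\G(v)=v$, so that $H\in\aut(\mathcal{G},D_{*v})$. Now $H_{*v}$ and $\phi$ both induce $\widehat{\phi}$, so they differ by an inner automorphism $\ad_g$ of $\pi_1(\mathcal{G},v)$; composing $H$ with an automorphism of $\mathcal{G}$ realising $\ad_g$ — such an automorphism fixes $v$ and trivially preserves twistors — produces a preimage of $\phi$ in $\aut(\mathcal{G},D_{*v})$, giving surjectivity of the second map.

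The step I expect to be the main obstacle is the surjectivity onto $C(\widehat{D})$: the inclusions and the twistor bookkeeping are routine unravellings of the definitions, and the relevant direction of Proposition~\ref{p:dt} is already available, but the fact that an element of $C(\widehat{D})$ may be realised by an automorphism of $\mathcal{G}$ at all genuinely relies on the rigidity of the simplex $\sigma$ in the boundary of Outer space, which is the substantive input borrowed from \cite{CL} (and \cite{CL2}). A secondary nuisance is the base-vertex bookkeeping in the $\aut$ case, where one must descend from outer automorphisms to honest ones while keeping $H_\G(v)=v$.
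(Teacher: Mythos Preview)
Your proposal is correct. The paper gives no proof of its own here---it simply records the statement as a rephrasing of \cite[Proposition~7.1]{CL}---and your sketch faithfully reconstructs that argument: the inclusions via the twistor calculation for $HDH^{-1}$, surjectivity onto $C(\widehat D)$ from the rigidity of the simplex $\sigma$ in the boundary of Outer space together with Proposition~\ref{p:dt} to force twistor preservation, and the base-vertex bookkeeping in the $\aut$ case deferred back to Cohen--Lustig, exactly as you flag.
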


This allows us to describe the centralisers of efficient Dehn twists by the Bass--Jiang decomposition of the subgroup of $\out(F_n)$ preserving $\mathcal{G}$.

\subsection{A short exact sequence for $C^0(\widehat{D})$}

We define: $$\aut^0(\mathcal{G},\widehat{D})=\aut(\mathcal{G},\widehat{D})\cap \aut^0(\mathcal{G}).$$ This is the subgroup of Dehn twists which act trivially on $\G$ and preserve the twistors of $\widehat{D}$. We denote by $C^0(\widehat{D})$ the image of $\aut^0(\mathcal{G},\widehat{D})$ in $C(\widehat{D})$. By Proposition \ref{p:graph} the homomorphism $$\alpha:\aut(\mathcal{G},\widehat{D}) \to \aut(\G)$$  given by $H \mapsto H_\G$ descends to a homomorphism $$\bar \alpha: C(\widehat{D}) \to \aut(\G)$$ with kernel $C^0(\widehat{D})$. Also, by Proposition~\ref{p:BJ2} the homomorphism $$A:\aut^0(\mathcal{G},\widehat{D}) \to \bigoplus_{w\in V(\G)} \out(G_w)$$ given by $H \mapsto (H_w)_{w \in V(\G)}$ descends to a homomorphism $$\bar{A}:C^0(\widehat{D}) \to \bigoplus_{w\in V(\G)} \out(G_w).$$  All Dehn twists in $\aut(\mathcal{G})$ lie in $\aut^0(\mathcal{G},\widehat{D})$ so by Theorem~\ref{t:vertex} the kernel of $\bar{A}$ is the whole of $DO(\mathcal{G})$. To complete our decomposition of $C(\widehat{D})$ we only need to describe the image of $\bar A$.

\begin{definition}\label{d:conjclass} We  pick a generator $a_e$ of each edge group $G_e$. Let $\mathcal{C}_w$ be the set of conjugacy classes in $G_w$ defined by: $$\mathcal{C}_w=\{[f_e(a_e)]:e \in E(\G), \tau(e)=w\}.$$
Let $\aut(G_w,{\C_w})$ and $\out(G_w,{\C_w})$ be the subgroups of $\aut(G_w)$ and $\out(G_w)$ respectively consisting of automorphisms that fix every conjugacy class in the finite set $\mathcal{C}_w$.\end{definition}

\begin{lemma}\label{l:dt} The image of the map $\bar{A} \co C^0(\widehat{D}) \to \bigoplus_{w \in V(\G)}\out(G_w)$ (equivalently, the map $A \co \aut^0(\mathcal{G},\widehat{D}) \to \bigoplus_{w \in V(\G)}\out(G_w)$) is equal to $\bigoplus_{w \in V(\G)} \out(G_w,\mathcal{C}_w)$. \end{lemma}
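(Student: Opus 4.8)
The plan is to prove the two containments separately, using the compatibility condition \eqref{hed1} in one direction and an explicit construction of an automorphism of $\mathcal{G}$ in the other.

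\medskip

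\emph{Image lies in $\bigoplus_w \out(G_w,\mathcal{C}_w)$.} Take $H \in \aut^0(\mathcal{G},\widehat{D})$, so $H_\G = 1$ and $H_e(z_e) = z_e$ for every edge $e$. Since each edge group is infinite cyclic with generator $a_e$, preserving the twistor $z_e$ forces $H_e$ to fix $a_e$ (here we use that in an efficient Dehn twist no edge is unused, so $z_e \neq 1$, hence $H_e$ cannot invert $a_e$; and $\mathrm{Aut}(\mathbb{Z}) = \{\pm 1\}$). Actually, even without fixing $a_e$ on the nose, what matters is the conjugacy class: the compatibility condition \eqref{hed1} with $H_\G(e) = e$ reads $H_{\tau(e)}(f_e(a)) = \delta(e) f_e(H_e(a)) \delta(e)^{-1}$, so $H_{\tau(e)}$ sends the conjugacy class $[f_e(a_e)]$ to $[f_e(H_e(a_e))] = [f_e(a_e)^{\pm 1}]$. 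To rule out the inverse, note that $H$ preserves $z_e$ and $H_e = H_{\bar e}$, so $H_e$ acts the same way on $a_e$ and $a_{\bar e}$; combined with $z_e = \gamma_e \gamma_{\bar e}^{-1}$ being $H_e$-invariant and the fact (from efficiency) that $f_e$ and $f_{\bar e}$ are injective with non-surjective, malnormal image, one sees $H_e$ must fix $a_e$. Hence $H_{\tau(e)}$ fixes $[f_e(a_e)] \in \mathcal{C}_{\tau(e)}$ for every $e$ adjacent to $w$, i.e. $H_w \in \aut(G_w,\mathcal{C}_w)$ and its class lies in $\out(G_w,\mathcal{C}_w)$.

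\medskip

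\emph{Surjectivity onto $\bigoplus_w \out(G_w,\mathcal{C}_w)$.} Conversely, given $(\phi_w)_{w \in V(\G)}$ with each $\phi_w \in \out(G_w,\mathcal{C}_w)$, I want to build $H \in \aut^0(\mathcal{G},\widehat{D})$ with $H_w$ representing $\phi_w$. The idea is to set $H_\G = 1$, $H_e = \mathrm{id}$ on each edge group (which is legitimate precisely because $\phi_w$ fixes each class $[f_e(a_e)]$), lift each $\phi_w$ to an actual automorphism $\Phi_w \in \aut(G_w)$, and then choose the correction elements $\delta(e) \in G_{\tau(e)}$ to repair compatibility. Concretely: since $\Phi_w$ fixes $[f_e(a_e)]$ for each $e$ with $\tau(e) = w$, there is $g_e \in G_w$ with $\Phi_w(f_e(a_e)) = g_e f_e(a_e) g_e^{-1}$; put $\delta(e) = g_e$. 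Then \eqref{hed2} (the simplified compatibility, valid since $H_\G$ and $H_e$ are identities) holds on the generator $a_e$ and hence on all of $G_e \cong \mathbb{Z}$. This produces a genuine $H \in \aut(\mathcal{G})$ with $H_\G = 1$. It preserves every twistor since $H_e(z_e) = z_e$, so $H \in \aut^0(\mathcal{G},\widehat{D})$, and $A(H) = ([\Phi_w])_w = (\phi_w)_w$ in $\bigoplus_w \out(G_w)$. The one subtlety is the ambiguity in the choice of $g_e$: two choices differ by an element of the centraliser of $f_e(a_e)$ in $G_w$, which by malnormality is just $f_e(G_e)$, so the resulting $H$'s differ by a Dehn twist — harmless for computing the image of $A$, but worth noting, and it is exactly why the kernel of $\bar A$ turned out to be $DO(\mathcal{G})$ in Theorem~\ref{t:vertex}.

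\medskip

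The main obstacle is the sign issue in the first part: ensuring that an element of $C^0(\widehat{D})$ cannot send some $[f_e(a_e)]$ to $[f_e(a_e)^{-1}]$. This is where the hypotheses of efficiency — no unused edges, no proper powers, malnormal edge images — and the constraint $H_e(z_e) = z_{H(e)}$ defining $\aut(\mathcal{G},\widehat{D})$ all get used; everything else is a routine unwinding of \eqref{hed1}, \eqref{hed2} and the construction of an automorphism of a graph of groups. I would make sure to state cleanly that on a $\mathbb{Z}$ edge group, being twistor-preserving is equivalent to being the identity, which makes both directions transparent.
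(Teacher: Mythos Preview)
Your proof is correct and follows essentially the same route as the paper's: use $H_\G=1$ and $H_e(z_e)=z_e$ together with $z_e\neq 1$ and $\aut(\mathbb{Z})=\{\pm 1\}$ to conclude $H_e=1$, then read off conjugacy-class preservation from the compatibility condition; for surjectivity, lift each $\phi_w$ and choose $\delta(e)$ to witness the preserved conjugacy class. Your ``Actually'' paragraph and the appeal to malnormality and the no-proper-powers condition are unnecessary detours --- the paper handles the sign issue in one line using only that $z_e\neq 1$ in a cyclic edge group, and neither of those further efficiency hypotheses is actually invoked in this lemma.
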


\begin{proof} Let $H \in \aut^0(\mathcal{G},\widehat{D})$. As $H_\G=1$ and $H$ preserves twistors, we have $H_e(z_e)=z_e$ for any edge. As each twistor is nontrivial and each edge group is cyclic this implies that $H_e=1$. The consistency condition for elements of $\aut(\mathcal{G})$  (equation \eqref{hed2} on page \pageref{hed2}) then implies: \begin{equation*}  H_{\tau(e)}(f_e(a))=\delta(e)f_e(a)\delta(e)^{-1}. \end{equation*} Applying this equation over all edges with $\tau(e)=w$ shows that $H_w$ fixes every conjugacy class in $\mathcal{C}_w$, so lies in $\aut(G_w,\C_w)$.

It only remains to show that $A$ is surjective. To do this, we take an element $H_w \in \aut(G_w,\mathcal{C}_w)$ for each vertex of $\G$ and build an element $H \in \autog$ with $H_w$ as the vertex automorphism at $w$. As $H_w \in \aut(G_w,\mathcal{C}_w)$, the conjugacy class of $f_e(a_e)$ is preserved by $H_w$, so there exists $\delta(e) \in G_w$ such that $H_w(f_e(a_e))=\delta(e)f_e(a_e)\delta(e)^{-1}$. As $a_e$ generates $G_e$, this identity holds for every element $a \in G_e$, and we may define $H$ to be the automorphism of $\mathcal{G}$ given by the trivial graph automorphism $H_\G=1$, the vertex automorphisms $(H_w)_{w \in V(\G)}$, 
trivial edge automorphisms, and twisting factors $(\delta(e))_{e \in E(\G)}$.
\end{proof}

\begin{remark}
The above lemma is true for the map $A$ in the more general situation that $\widehat{D}$ has nontrivial twistors and cyclic edge groups. However, efficiency is required for the correspondence between $C^0(\widehat{D})$ and $\aut(\mathcal{G},\widehat{D})$, and therefore the description of the map $\bar{A}$.
\end{remark}

Assembling the work in this section provides our main theorem for centralisers of Dehn twist automorphisms in $\out(F_n)$:

\begin{theorem}\label{t:ses}
Let $D$ be an efficient Dehn twist on a graph of groups $\mathcal{G}$ with $\pi_1(\mathcal{G},v)\cong F_n$. Let $C(\widehat{D})$ be the centraliser of $D$ in $\outn$. There exists a homomorphism: \begin{align*} \bar{\alpha}\co &C(\widehat{D}) \to \aut(\G),\end{align*} with kernel $C^0(\widehat{D})$ a finite index subgroup fitting into the exact sequence: \begin{gather*}1\to DO(\mathcal{G}) \to C^0(\widehat{D}) \to \bigoplus_{w \in V(\G)} \out(G_w,\mathcal{C}_w)\to1,  \end{gather*}
where $DO(\mathcal{G})$ is a free abelian group of Dehn twists of rank equal to the number of geometric edges of $\mathcal{G}$. \qed \end{theorem}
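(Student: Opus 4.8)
The plan is to assemble the results of this subsection; there is essentially no new difficulty, only bookkeeping. First I would record that the hypotheses of Definition~\ref{efficient}, together with $\pi_1(\mathcal{G},v)\cong F_n$ for $n\ge2$, supply exactly the standing assumptions required by the Bass--Jiang machinery. As noted after Definition~\ref{efficient}, each vertex group is free of rank at least two, each edge group is infinite cyclic, the $F_n$-action on $T_\mathcal{G}$ is minimal and non-abelian, and $\mathcal{G}$ satisfies property $(*)$ of Proposition~\ref{p:dt}. Since $n\ge2$ the centre of $\pi_1(\mathcal{G},v)$ is trivial, and since ``no proper powers'' (condition~(4)) says each $f_e(G_e)$ is root-closed in the free group $G_{\tau(e)}$, a standard fact about free groups shows $f_e(G_e)$ is malnormal in $G_{\tau(e)}$. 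Thus Propositions~\ref{p:graph}, \ref{p:BJ2}, \ref{p:dt2} and Theorem~\ref{t:vertex} all apply to $\mathcal{G}$.

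Next I would produce the homomorphism $\bar\alpha$. By Proposition~\ref{p:autgd} the map $H\mapsto\widehat H$ gives a surjection $\aut(\mathcal{G},\widehat{D})\to C(\widehat{D})$, so $C(\widehat{D})\subseteq\im U$, and restricting the homomorphism $\bar\alpha\co\im U\to\aut(\G)$ of Proposition~\ref{p:graph} to $C(\widehat{D})$ gives the required $\bar\alpha\co C(\widehat{D})\to\aut(\G)$, sending $\widehat H$ to $H_\G$. Since $\bar\alpha$ is well defined on $\im U$, we have $\bar\alpha(\widehat H)=1$ exactly when $H_\G=1$, i.e.\ exactly when $\widehat H$ lies in the image of $\aut^0(\mathcal{G},\widehat{D})$; hence $\ker\bar\alpha=C^0(\widehat{D})$. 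As $\G$ is finite, $\aut(\G)$ is finite, so $C^0(\widehat{D})$ has finite index in $C(\widehat{D})$.

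Finally I would build the short exact sequence for $C^0(\widehat{D})$. Applying Proposition~\ref{p:BJ2} to $\aut^0(\mathcal{G},\widehat{D})\subseteq\aut^0(\mathcal{G})$ gives the descended map $\bar A\co C^0(\widehat{D})\to\bigoplus_{w}\out(G_w)$; Lemma~\ref{l:dt} identifies its image with $\bigoplus_w\out(G_w,\mathcal{C}_w)$, which is surjectivity onto the right-hand term. For the kernel, every Dehn twist $H$ of $\mathcal{G}$ has $H_\G=1$ and $H_e=1$, so $H_e(z_e)=z_e=z_{H(e)}$ and $H\in\aut^0(\mathcal{G},\widehat{D})$; hence $DO(\mathcal{G})\le C^0(\widehat{D})$. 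Theorem~\ref{t:vertex} says $\ker\bigl(\bar A\co\im U^0\to\bigoplus_w\out(G_w)\bigr)=DO(\mathcal{G})$, and since $C^0(\widehat{D})\subseteq\im U^0$ with $DO(\mathcal{G})\le C^0(\widehat{D})$, the kernel of $\bar A$ on $C^0(\widehat{D})$ is precisely $DO(\mathcal{G})$. By Proposition~\ref{p:dt2}, $DO(\mathcal{G})$ is free abelian of rank equal to the number of geometric edges of $\mathcal{G}$. Splicing these facts together yields the exact sequence $1\to DO(\mathcal{G})\to C^0(\widehat{D})\to\bigoplus_w\out(G_w,\mathcal{C}_w)\to1$. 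The only steps needing any care are verifying that the Bass--Jiang hypotheses follow from efficiency and that the descended maps of Propositions~\ref{p:graph} and~\ref{p:BJ2} genuinely apply to the twistor-preserving subgroups $\aut(\mathcal{G},\widehat{D})$ and $\aut^0(\mathcal{G},\widehat{D})$; both are routine, and I expect no substantive obstacle.
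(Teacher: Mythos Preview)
Your proposal is correct and follows essentially the same route as the paper: the theorem is obtained by assembling Proposition~\ref{p:autgd}, Proposition~\ref{p:graph}, Proposition~\ref{p:BJ2}, Lemma~\ref{l:dt}, Theorem~\ref{t:vertex}, and Proposition~\ref{p:dt2}, after noting that efficiency plus $\pi_1(\mathcal{G},v)\cong F_n$ supplies all the required Bass--Jiang hypotheses. Your extra care in spelling out malnormality (via condition~(4)) and the containment $DO(\mathcal{G})\le C^0(\widehat{D})\subseteq\im U^0$ is slightly more explicit than the paper, but the argument is the same.
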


\subsection{Finiteness properties}\label{s:finprop}

Every Dehn twist in $\out(F_n)$ may be represented by an efficient Dehn twist $D$ (\cite{CL}, Section 8.2.). Each group $\out(G_w,\mathcal{C}_w)$ in the exact sequence for $C^0(\widehat{D})$ has a finite-index torsion-free subgroup $A_w$ with finite classifying space $K(A_w,1)$ (\cite{CV}, Corollary~6.1.4.). Let $H$ be the intersection of the preimages of $A_w$ in $C^0(\widehat{D})$. Then $H$ fits in the exact sequence
$$1 \to DO(\mathcal{G}) \to H \to \bigoplus_{w \in V(\G)} A_w \to 1,$$
and, as both ends of this exact sequence have finite classifying spaces, so does $H$ (see, for example, Theorem 7.1.10 of \cite{G}).

\begin{corollary}\label{c:fpout} If $\phi \in \out(F_n)$ is a Dehn twist automorphism then $C(\phi)$ has a finite-index, torsion-free subgroup with finite classifying space. \qed \end{corollary}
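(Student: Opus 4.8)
The statement is, by design, almost a formal consequence of Theorem~\ref{t:ses} together with two external inputs (Cohen--Lustig's existence of efficient representatives and Culler--Vogtmann's virtual finiteness of relative automorphism groups), so the plan is simply to assemble these.

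\emph{Step 1: reduce to the efficient case.} By Section~8.2 of \cite{CL}, any Dehn twist $\phi\in\out(F_n)$ is conjugate in $\out(F_n)$ to $\widehat{D}$ for some \emph{efficient} Dehn twist $D$ on a graph of groups $\mathcal{G}$ with $\pi_1(\mathcal{G},v)\cong F_n$. Conjugation induces an isomorphism $C(\phi)\cong C(\widehat{D})$, so it suffices to produce the required subgroup inside $C(\widehat{D})$.

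\emph{Step 2: apply the short exact sequence.} By Theorem~\ref{t:ses} the homomorphism $\bar\alpha\co C(\widehat{D})\to\aut(\G)$ has finite image (as $\G$ is a finite graph), so its kernel $C^0(\widehat{D})$ has finite index in $C(\widehat{D})$, and it fits into
\[1\to DO(\mathcal{G})\to C^0(\widehat{D})\to\bigoplus_{w\in V(\G)}\out(G_w,\mathcal{C}_w)\to1 .\]
By Corollary~6.1.4 of \cite{CV}, each relative group $\out(G_w,\mathcal{C}_w)$ contains a finite-index, torsion-free subgroup $A_w$ admitting a finite classifying space. Let $H\le C^0(\widehat{D})$ be the preimage of $\bigoplus_{w}A_w$; since $\bigoplus_w A_w$ has finite index in $\bigoplus_w\out(G_w,\mathcal{C}_w)$, the group $H$ has finite index in $C^0(\widehat{D})$, hence in $C(\widehat{D})\cong C(\phi)$, and it sits in
\[1\to DO(\mathcal{G})\to H\to\bigoplus_{w\in V(\G)}A_w\to1 .\]

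\emph{Step 3: verify torsion-freeness and the finiteness property of $H$.} The kernel $DO(\mathcal{G})\cong\mathbb{Z}^m$ is torsion-free and has a finite classifying space (a torus of dimension $m$, the number of geometric edges of $\mathcal{G}$); the quotient $\bigoplus_w A_w$ is torsion-free with a finite classifying space (a finite product of such). An extension of a torsion-free group by a torsion-free group is torsion-free (an element of finite order maps to a finite-order, hence trivial, element of the quotient, so lies in the torsion-free kernel), and an extension of a group with finite classifying space by a group with finite classifying space again has a finite classifying space (e.g.\ Theorem~7.1.10 of \cite{G}). Thus $H$ is the required subgroup.

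I do not expect a genuine obstacle: all the real content sits in Theorem~\ref{t:ses} and in the cited results of Cohen--Lustig and Culler--Vogtmann. The only points needing a little care are that the chosen $A_w$ can be taken simultaneously finite-index, torsion-free, and of type $F$, and that the type-$F$ property (and torsion-freeness) genuinely transfer through the group extension defining $H$ — both of which are standard.
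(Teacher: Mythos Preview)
Your proof is correct and follows essentially the same route as the paper: reduce to an efficient representative via \cite{CL}, apply Theorem~\ref{t:ses}, pull back the finite-index torsion-free type-$F$ subgroups $A_w\le\out(G_w,\mathcal{C}_w)$ from \cite{CV}, and conclude via the standard permanence of type $F$ under extensions (Theorem~7.1.10 of \cite{G}). The only difference is cosmetic: you argue torsion-freeness of $H$ directly via the extension, whereas it also follows automatically from $H$ having a finite classifying space.
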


In particular, the centraliser of a Dehn twist automorphism in $\out(F_n)$ is finitely presented. In Section \ref{s:alg} we prove that the process of finding a presentation for the centraliser of a Dehn twist in $\aut(F_n)$ or $\out(F_n)$ can be made algorithmic.

\begin{remark}
The centraliser $C(\widehat{D})$ is a finite index subgroup of the stabiliser of the simplex corresponding to the tree $T_\mathcal{G}$ in the boundary of Outer space, and it follows that the stabilisers of these simplices satisfy the same finiteness properties. More generally, Guirardel and Levitt show that the group of automorphisms preserving any splitting of $F_n$ with cyclic subgroups is of type VF, and generalise this to toral relatively hyperbolic groups \cite{GL2}.\end{remark}

\section{Pointedly efficient Dehn twists}\label{s:peff}

It is shown in Section 8 of \cite{CL} that any Dehn twist automorphism in $\out(F_n)$ is represented by an efficient Dehn twist. This is not the case with $\aut(F_n)$. In this section we introduce the notion of a \emph{pointedly efficient} Dehn twist, and show that every Dehn twist automorphism in $\aut(F_n)$ is represented by a pointedly efficient Dehn twist.  We show that the centraliser of a pointedly efficient Dehn twist automorphism in $\aut(F_n)$ has a similar decomposition to the centraliser of a Dehn twist automorphism in $\out(F_n)$.

The definition of being pointedly efficient comes by treating the basepoint of the graph of groups differently, and is natural when working with automorphisms rather than the outer automorphism class they represent. For instance, every automorphism of linear growth in $\aut(F_n)$ is a root of a pointedly efficient Dehn twist (this follows from Propositions 4.25, 7.22, and Theorem 8.6 of \cite{R}).

\subsection{Building efficient representatives}

In \cite{CL}, Cohen and Lustig describe modifications of a Dehn twist $D$ and its underlying graph of groups $\mathcal{G}$. We briefly describe them here and refer the reader to Definition 8.2 of \cite{CL} for the precise definitions (\emph{positively} and \emph{negatively bonded} edges are described in Definition~\ref{d:bonding}).

\begin{enumerate}[(M1)]
\item {\em Transition to a proper subgraph:} Remove a valence one vertex $w$ when its corresponding edge map is surjective.
\item {\em Delete an invisible vertex with negatively bonded edges:} Remove  a valence two vertex $w$ when both edge maps are surjective and negatively bonded.
\item {\em Fold positively bonded edges:} Fold two positively bonded edges $e$ and $e'$ at a vertex $w$.
\item {\em Contract unused edges:} Collapse an edge $e$ with trivial twistor and replace the vertex group(s) with an HNN extension or an amalgam, depending on whether $e$ was a loop or not.
\item {\em Get rid of proper powers:} Adjoin a formal root to an edge group when $f_e(a_e)$ is a proper power. 
\end{enumerate}
If $D$ is a Dehn twist on $\mathcal{G}$ that fails (1), (2), (3), (4), or (5) in the definition of efficient (Definition \ref{efficient}) then we may apply one of the moves (M1)--(M5) to obtain a new Dehn twist $D'$ on a graph of groups $\mathcal{G}'$. Furthermore:

\begin{lemma}[Lemma 8.3, \cite{CL}] For any of the operations (M1)--(M5) and any vertices $w \in V(\G)$ and $w' \in V(\G')$ there exists an isomorphism $\rho: \pi_1(\mathcal{G},w) \to \pi_1(\mathcal{G}',w')$ such that $\widehat{D}'=\rho\widehat{D}\rho^{-1}$.\end{lemma}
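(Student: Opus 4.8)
The plan is to handle the five moves one at a time by a uniform scheme, after first reducing to a convenient pair of basepoints — a reduction which is immediate, since the class $\widehat D$ does not depend on the choice of basepoint in $\mathcal{G}$ (different basepoints are matched by $\ad_W$ for a path $W\in\pi_1(\mathcal{G},v,w)$) and likewise for $\widehat D'$ in $\mathcal{G}'$. For each of (M1)--(M5) the scheme is: (i) write down $\mathcal{G}'$ explicitly; (ii) exhibit a canonical isomorphism $\rho\co\pi_1(\mathcal{G},w)\to\pi_1(\mathcal{G}',w')$ for a suitable pair $w,w'$; (iii) identify the Dehn twist $D'$ on $\mathcal{G}'$, whose twistors are carried over from those of $D$ by the natural correspondence of edges; and (iv) check that $\widehat D'=\rho\widehat D\rho^{-1}$, typically by showing that $D'_{*w'}\circ\rho$ and $\rho\circ D_{*w}$ agree up to an inner automorphism.

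The collapse-type moves (M1), (M2), (M4) should all fall to a single computation resting on the fact that $\gamma_e,\gamma_{\bar e}\in Z(G_e)$. For (M4) the collapsed edge $e$ has trivial twistor, and $D_*(t_e)=\delta(\bar e)t_e\delta(e)^{-1}=f_{\bar e}(z_e)^{-1}t_e$ then equals $t_e$; since $D_*$ also fixes every vertex group, collapsing $e$ — replacing $G_{\iota(e)},G_{\tau(e)}$ by their amalgam over $G_e$, or $G_{\tau(e)}$ by an HNN extension if $e$ is a loop, neither of which changes $\pi_1$ — yields a $D'$ that is trivial on the new vertex group and agrees with $D$ on all surviving stable letters, so (iv) is immediate. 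For (M1), which deletes a valence-one vertex $w=\tau(e)$ with $f_e$ an isomorphism, every connected word representing an element of $\pi_1(\mathcal{G},\iota(e))$ and passing through $w$ shortens via $t_ef_e(a)t_e^{-1}=f_{\bar e}(a)$, so $\pi_1(\mathcal{G},\iota(e))=\pi_1(\mathcal{G}',\iota(e))$; the content is that this rewriting is $D_*$-equivariant, which I would establish by the computation $D_*(t_ef_e(a)t_e^{-1})=f_{\bar e}(a)$, the twisting factors cancelling because $\gamma_e$ and $\gamma_{\bar e}$ are central. Move (M2) is the same for a valence-two vertex $w$ between $e_1$ and $e_2$ with both edge maps isomorphisms: I would splice $e_1$ and $e_2$ into one edge with stable letter $t_{e_1}t_{\bar e_2}$, read off the new twistor from $z_{e_1}$, $z_{e_2}$ and the identification $G_{e_2}\to G_{e_1}$ induced by the edge maps (its nontriviality being exactly where negative bonding is used), and conclude by the same central-element cancellation.

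The fold and root moves (M3), (M5) are where the real work lies, since neither is a collapse: (M3) folds two positively bonded edges $e,e'$ at $w$ into one, and (M5) enlarges an edge group $G_e$ to the root-closure of $f_e(G_e)$ in $G_{\tau(e)}$ while adjoining a matching formal root on the $\iota(e)$ side. The hard part will be to verify that $\rho$ is still an \emph{isomorphism} — equivalently, that the move does not change $\pi_1$. For (M3) this is precisely what positive bonding buys: having $m,n\ge1$ with $f_e(z_e^m)$ conjugate to $f_{e'}(z_{e'}^n)$ in $G_w$ makes the two relevant edges of the Bass--Serre tree "parallel", so that the fold is $\pi_1$-surjective rather than merely $\pi_1$-injective — concretely one writes the stable letter of the folded-away edge as a word in the surviving stable letter and the vertex groups. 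For (M5) one checks that enlarging $G_e\cong\mathbb{Z}$ along an element which is a $p$-th power, together with the formal root adjunction, is a $\pi_1$-preserving refinement (when $\pi_1(\mathcal{G},w)\cong F_n$ the required root is already present, or can be absorbed). Once $\rho$ is in hand the new twistor is the root or combination of the old one(s) dictated by the bonding data — a $p$-th root of $z_e$ for (M5) — and (iv) again reduces to the central-element bookkeeping used above. I expect (M3) to be the genuine obstacle: once the fold, the new edge group, and the new twistor are pinned down, checking that the Dehn twists correspond should be routine. One could alternatively try to present all five moves uniformly as elementary deformations of the minimal $F_n$-tree $T_{\mathcal{G}}$ keeping it inside the $\widehat D$-fixed simplex of Outer space used by Cohen and Lustig, but the edge-by-edge route above seems more economical.
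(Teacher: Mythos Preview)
The paper does not prove this lemma: it is quoted as Lemma~8.3 of Cohen--Lustig \cite{CL} and stated without argument, and the proof of the subsequent lemma simply refers back to ``the proof of Lemma~8.3 of \cite{CL}''. So there is no in-paper proof against which to compare your proposal.

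That said, your outline is the natural case-by-case argument and, as far as one can infer from the citation, tracks what Cohen--Lustig do: reduce to convenient basepoints via basepoint-independence of $\widehat D$, then for each move exhibit $\rho$ explicitly and verify that the centrality of $\gamma_e\in Z(G_e)$ makes $D_*$ compatible with the relevant rewriting. Two small corrections. For (M2), negative bonding is a hypothesis governing \emph{when} the move is applied, not an ingredient in proving that $\rho$ intertwines $\widehat D$ and $\widehat{D'}$; your remark about it ensuring the new twistor is nontrivial is about efficiency of $D'$, which is beside the point here. For (M5), the mechanism is cleaner than your $F_n$-parenthetical suggests and needs no freeness hypothesis: the formal root adjoined on the $\iota(e)$ side is exactly $t_e r t_e^{-1}$ (where $r^p=f_e(a_e)$ in $G_{\tau(e)}$), an element already present in $\Pi(\mathcal{G})$, so a Tietze elimination shows $\pi_1$ is unchanged. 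Your flagging of (M3) as the step requiring genuine care is accurate.
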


In the $\aut(F_n)$ case, if we perform a move (M1) or (M2) when the vertex removed is equal to our chosen basepoint $v$, this may cause problems (we cannot choose an arbitrary basepoint). For this reason we define:

\begin{enumerate}[(M1*)]
\item Perform (M1) at a vertex $w\neq v$.
\item Perform (M2) at a vertex $w\neq v$.
\end{enumerate}

\begin{lemma} Let $D$ be a Dehn twist on a graph of groups $\mathcal{G}$ with basepoint $v$. For any of the operations (M1*), (M2*), (M3)--(M5) there exists a vertex $v' \in V(\G')$ and an isomorphism $\rho: \pi_1(\mathcal{G},v) \to \pi_1(\mathcal{G}',v')$ such that $D_{*v}'=\rho D_{*v}\rho^{-1}$.\end{lemma}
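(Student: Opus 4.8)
The plan is to follow the proof of Lemma~8.3 of \cite{CL} move by move, checking that in each of the five cases the isomorphism produced there can be chosen so as to conjugate the \emph{automorphism} $D_{*v}$ to $D'_{*v'}$, and not merely the outer class $\widehat{D}$ to $\widehat{D}'$. The point of restricting to (M1*) and (M2*) is precisely that this keeps the basepoint $v$ alive, which is what makes the strengthened statement meaningful.

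First I would pin down, for each move, the vertex $v'$ and the isomorphism $\rho$. Since (M1*) and (M2*) delete only a vertex $w\neq v$, and (M3) and (M5) delete no vertices at all, in those four cases the basepoint survives and we take $v'$ to be the vertex of $\G'$ to which $v$ is sent. In (M4) the basepoint $v$ is either untouched, in which case $v'=v$, or is an endpoint of the contracted edge $e$, in which case $v'$ is the new vertex carrying the amalgam $G_{\iota(e)}\ast_{G_e}G_{\tau(e)}$ (if $e$ is not a loop) or the HNN extension $G_x\ast_{G_e}$ (if $e$ is a loop at $x$). The isomorphism $\rho\co\pi_1(\mathcal{G},v)\to\pi_1(\mathcal{G}',v')$ is then the canonical one: a retraction deleting the $t_e$-syllables in (M1*) and (M2*), which is legitimate because the relevant edge maps are surjective; the folding map of (M3); the map deleting the $t_e$-syllables and reading adjacent vertex elements inside the amalgam --- or, in the loop case, interpreting $t_e$ as the stable letter of the HNN extension --- in (M4); and essentially the identity in (M5), where the only observation needed is that $\delta(e)=f_e(\gamma_e)$ is a power of $f_e(a_e)$, hence lies in the image of the enlarged edge map, so $D'$ is obtained by rewriting each twistor as a power of the new edge-group generator.

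Next I would verify the identity $D'_{*v'}\circ\rho=\rho\circ D_{*v}$ in each case, by a direct computation in the path group $\Pi(\mathcal{G})$. Away from the edge that is removed or altered, $D$ and $D'$ carry the same twistor data, so all the content is local to that edge. For (M1*) and (M2*) the key local fact is that $D_{*}$ sends a syllable $t_e\,f_e(a)\,t_{\bar e}$ to $f_{\bar e}(a)$: the conjugating factors $f_e(\gamma_e)$ and $f_{\bar e}(\gamma_{\bar e})$ commute past $f_e(a)$ because $\gamma_e,\gamma_{\bar e}\in Z(G_e)$, and this is exactly what $\rho$ does to the same syllable, matching $D'_{*v'}$ acting trivially on it. For (M4) the contracted edge is unused, so $\gamma_e=\gamma_{\bar e}$ and $D_{*}(t_e)=f_{\bar e}(\gamma_e)\,t_e\,f_e(\gamma_e)^{-1}$; in the amalgam $f_{\bar e}(\gamma_e)=f_e(\gamma_e)$, so after deletion of $t_e$ this becomes trivial, while in the HNN extension the defining relation $t_e\,f_e(a)\,t_e^{-1}=f_{\bar e}(a)$ together with $\gamma_e\in Z(G_e)$ gives $f_{\bar e}(\gamma_e)\,t_e\,f_e(\gamma_e)^{-1}=t_e$; either way $D_{*}(t_e)$ and $t_e$ have the same $\rho$-image, which matches $D'_{*v'}$ being trivial on the corresponding generator. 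For (M5) one simply compares the two formulas for the image of $t_e$ under $D_{*}$ and $D'_{*v'}$ and sees that they agree once $\gamma'_e$ is taken to be $\gamma_e$, regarded inside the enlarged edge group.

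The hard part will be the contraction move (M4), and within it the loop sub-case together with the sub-case where $v$ is an endpoint of $e$: there the isomorphism type of a vertex group genuinely changes, to an amalgam or an HNN extension, so one must carry normal forms through the change of presentation and recognise that the twist along the unused edge --- which is \emph{not} literally fixed by $D_{*}$ on $\Pi(\mathcal{G})$ --- becomes trivial after the collapse, using the new relations and the centrality of $\gamma_e$. It should also be noted that there is no purely formal shortcut: composing the Cohen--Lustig isomorphism with an inner automorphism to absorb the discrepancy between $D'_{*v'}$ and $\rho D_{*v}\rho^{-1}$ would require solving a twisted-conjugacy equation $k\,D_{*v}(k)^{-1}=g$ in $\pi_1(\mathcal{G}',v')$, which need not have a solution, so one genuinely has to return to the explicit maps of \cite{CL}.
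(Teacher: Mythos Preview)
Your proposal is correct and follows essentially the same approach as the paper: both argue by going back to the explicit isomorphisms in the proof of Lemma~8.3 of \cite{CL} and observing that, provided the basepoint $v$ is not deleted, those isomorphisms already intertwine $D_{*v}$ with $D'_{*v'}$ at the level of $\aut(F_n)$ rather than merely $\out(F_n)$. The paper's proof is a one-sentence appeal to this fact, whereas you carry out the case-by-case verification explicitly; your local computations for (M1*), (M2*) and the two sub-cases of (M4) are accurate, and your remark that one cannot simply absorb a discrepancy by an inner automorphism is a useful sanity check, though not needed for the argument itself.
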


\begin{proof}
This follows from the proof of Lemma 8.3 of \cite{CL}. The isomorphism $\rho$ may be chosen so that $D'$ represents the same element of $\aut(F_n)$ (rather than $\out(F_n)$) as long as we do not remove the base vertex using  moves (M1) or (M2).
\end{proof}

\subsection{Pointedly efficient Dehn twists}

The work above tells us that we need to be more careful when improving representatives of Dehn twist automorphisms in $\aut(F_n)$. This leads to the following definition:

\begin{definition}
Let $D$ be a Dehn twist on a graph of groups $\mathcal{G}$ with a chosen basepoint $v$. It is called {\em pointedly efficient} if
\begin{enumerate}[(1*)]
\item $\mathcal G$ is {\em minimal away from the basepoint:} if $w\neq v$ has valence one and $w=\tau(e)$, then the edge map $f_e$ is not surjective,
\item there is {\em no invisible vertex away from the basepoint:} There is no 2-valent vertex $w\neq v$ such that both edge maps $f_{e_i}$ are surjective, where $\tau(e_1)=\tau(e_2)=w$ and $e_1\neq e_2$,
\end{enumerate}
and the conditions (3)--(5) of Definition \ref{efficient} are satisfied.
\end{definition}

Proposition 8.4 of \cite{CL} tells us that for any Dehn twist one can iteratively apply the operations (M1)--(M5) only a finite number of times.  A Dehn twist is pointedly efficient if we are unable to apply any of the moves (M1*), (M2*), (M3)--(M5). The process of checking for (pointed) efficiency and applying the above moves can be made algorithmic (this is described in detail in \cite{CL}). Hence: 

\begin{proposition}\label{p:peff_rep}Iteratively applying moves (M1*), (M2*), (M3)--(M5) gives an algorithm to obtain a pointedly efficient representative from any Dehn twist representative of an element of $\aut(F_n)$. \qed \end{proposition}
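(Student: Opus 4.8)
The plan is to read off the statement from three facts, most of which are already in place: the moves (M1*), (M2*), (M3)--(M5) are effective; applying any one of them preserves the $\aut(F_n)$-conjugacy class of the automorphism under consideration; and the procedure of applying them cannot run forever, halting precisely at a pointedly efficient Dehn twist. Starting from any Dehn twist $D$ on a graph of groups $\mathcal{G}$ with basepoint $v$ and $\pi_1(\mathcal{G},v)\cong F_n$, each of (M1*), (M2*), (M3)--(M5) is algorithmic: the triggering conditions and the resulting modified graph of groups, twistors and basepoint are all written out explicitly in Definition 8.2 of \cite{CL}. Moreover, by the lemma immediately preceding the definition of pointed efficiency, applying such a move produces a Dehn twist $D'$ on a graph of groups $\mathcal{G}'$, a vertex $v'$, and an isomorphism $\rho\co\pi_1(\mathcal{G},v)\to\pi_1(\mathcal{G}',v')$ with $D'_{*v'}=\rho D_{*v}\rho^{-1}$. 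So every intermediate object produced is again a Dehn twist representative of our fixed element of $\aut(F_n)$.

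Next I would prove termination and identify the terminal state. A move (M1*) is simply a move (M1) performed at a non-basepoint vertex, and a move (M2*) is simply a move (M2) performed at a non-basepoint vertex; hence any run of the algorithm is, forgetting the stars, a sequence of moves drawn from (M1)--(M5). By Proposition 8.4 of \cite{CL} no such sequence is infinite, so the iteration halts after finitely many steps. When it halts, none of (M1*), (M2*), (M3)--(M5) applies; and a Dehn twist to which none of these moves applies is, by inspection of the definitions, exactly one that satisfies conditions (1*) and (2*) of the definition of pointed efficiency together with conditions (3)--(5) of Definition \ref{efficient}. Indeed, a failure of (1*) enables (M1*); failures of (3), (4), (5) enable (M4), (M5), (M3) respectively; and a failure of (2*) enables either (M3) or (M2*), as explained below.

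The one comparison requiring a word is this last one, between the move (M2*) and condition (2*). Condition (2*) forbids a $2$-valent vertex $w\neq v$ with both edge maps surjective, whereas move (M2*) is triggered only when, in addition, the two incident edges are negatively bonded. These agree because if $w$ is $2$-valent with both edge maps $f_{e_1},f_{e_2}$ surjective, then $G_w$ is infinite cyclic, so $f_{e_1}(z_{e_1})$ and $f_{e_2}(z_{e_2})$ --- both nontrivial once condition (3) holds, which we may assume --- lie in a common cyclic group and suitable nonzero powers of them coincide; the edges are therefore positively or negatively bonded, and in the positive case move (M3) (equivalently, a violation of condition (5), imposed at every vertex including the basepoint) applies instead. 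Hence no violation of a pointed-efficiency condition escapes the move list. This small amount of bookkeeping around $2$-valent vertices is the only genuine point to check; decidability of the moves and termination of the process are immediate from the cited work of Cohen and Lustig.
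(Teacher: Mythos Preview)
Your proof is correct and follows essentially the same approach as the paper: termination is inherited from Proposition~8.4 of \cite{CL}, and the terminal state is pointedly efficient because no move applies. The paper compresses this into two sentences and simply asserts the equivalence between ``no move applies'' and ``pointedly efficient''; your explicit treatment of the gap between condition~(2*) and move~(M2*) --- showing that a violation of (2*) forces either (M3) or (M2*) once (M4) is unavailable --- fills in a detail the paper leaves to the reader.
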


In order to use the results of Bass and Jiang, we require that the graph of groups $\mathcal{G}$ is minimal. This is not always the case for a pointedly efficient Dehn twist, as edge maps can be surjective at the base vertex $v$. We get around this by \emph{stabilisation}: we replace $G_v$ with $G_v \ast \mathbb{Z}$ so that the new graph of groups $\mathcal{G}'$ is now minimal.

\subsection{Stabilisation homomorphisms}

Given a graph of groups $\mathcal{G}$ with chosen basepoint $v$, the \emph{stabilisation} $\mathcal{G}'$ of $\mathcal{G}$ is the graph of groups obtained as follows: The underlying graph $\G$ is the same. $G_v$ is replaced by $G_v'=G_v\ast \mathbb{Z}$, whereas the other vertex groups and all edge groups are not modified. There is an obvious inclusion $i \co G_v \to G_v'$. We define $f_e'=f_e$ if $\tau(e) \neq v$, otherwise define $f_e'=i \circ f_e$. The injection $i$ then induces stabilisation maps:
\begin{align*} i_{G_v}\co &\aut(G_v) \to \aut(G'_v), \\  i_{F_n}\co &\aut(\pi_1(\mathcal{G},v)) \to\aut(\pi_1(\mathcal{G'},v))\cong\aut(F_{n+1}), \\ i_{\mathcal{G}}\co &\aut(\mathcal{G},v)\to\aut(\mathcal{G}',v).
\end{align*}

The first two maps are defined by extending the relevant automorphism to act trivially on the new free factor. $i_{\mathcal{G}}(H)=H'$ is defined to be the automorphism of $\mathcal{G}'$ with $H_v'=i_{G_v}(H_v)$ and $\delta'(e)=i(\delta(e))$ if $\tau(H_\G(e))=v$. The remaining data is the same as for $H$.

Given a Dehn twist $D=(\mathcal{G},(\gamma_e)_{e\in E(\G)})$ of $\mathcal{G}$, the same elements $\gamma_e$ define a Dehn twist $D'$ on $\mathcal{G}'$, which we refer to as the {\em stabilisation} of $D$. Then $i_{\mathcal{G}}(D)=D'$ and $i_{F_n}(D_{*v})=D_{*v}'$. One can check the following:

\begin{lemma}\label{l:peffst}
$D$ is pointedly efficient if and only if its stabilisation $D'$ is efficient. \qed
\end{lemma}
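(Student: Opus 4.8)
The plan is to prove Lemma~\ref{l:peffst} by comparing, condition by condition, the definition of \emph{pointedly efficient} for $D$ on $\mathcal{G}$ with the definition of \emph{efficient} for the stabilisation $D'$ on $\mathcal{G}'$. The key observation is that stabilisation alters only the vertex group $G_v$ (replacing it by $G_v \ast \mathbb{Z}$) and composes the edge maps into $v$ with the inclusion $i$; the underlying graph $\G$, the edge groups, the twistors $z_e$, and all other edge maps are untouched. So conditions (3), (4) and (5) — no unused edges, no proper powers, no positively bonded edges — are each visibly equivalent for $D$ and $D'$: condition (3) depends only on the twistors, which are identical; and conditions (4), (5) only ever compare elements $f_e(z_e^m)$ inside a vertex group, so for vertices $w \neq v$ nothing changes, while for the vertex $v$ conjugacy and power relations in $G_v'$ restrict to the same relations in $G_v$ because $G_v$ is a free factor of $G_v' = G_v \ast \mathbb{Z}$ (in particular $f_e'(a) = i(f_e(a))$ is a proper power in $G_v'$ iff $f_e(a)$ is a proper power in $G_v$, and the images $f_e'(G_{e})$ for edges into $v$ are the $i$-images of the $f_e(G_e)$, with conjugacy detected identically).

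Next I would handle the two conditions that actually differ, namely efficiency conditions (1) and (2) for $D'$ versus the ``away from the basepoint'' versions (1*) and (2*) for $D$. The point is precisely that stabilisation is designed to make the basepoint never cause a failure of minimality: if $w \neq v$ has valence one with $w = \tau(e)$, then $f_e' = f_e$ and $G_w' = G_w$, so $f_e'$ is surjective iff $f_e$ is; thus condition (1) for $D'$ restricted to vertices $w \neq v$ is exactly condition (1*) for $D$. For the vertex $v$ itself: if $v$ has valence one with $v = \tau(e)$, then $f_e' = i \circ f_e$ maps $G_e$ into $G_v \ast \mathbb{Z}$, which is never surjective (the image misses the $\mathbb{Z}$ factor), so condition (1) is \emph{automatically} satisfied at $v$ in $\mathcal{G}'$ regardless of $D$. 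Hence condition (1) for $D'$ holds iff (1*) holds for $D$. An identical argument handles (2) versus (2*): for a $2$-valent vertex $w \neq v$ nothing changes, and for the $2$-valent vertex $v$ at least one of the two edge maps $f_{e_i}' = i \circ f_{e_i}$ fails to be surjective onto $G_v \ast \mathbb{Z}$, so the ``invisible vertex'' configuration can never occur at $v$ in $\mathcal{G}'$; thus (2) for $D'$ is equivalent to (2*) for $D$.

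Assembling the five equivalences gives the lemma. I do not anticipate a genuine obstacle here; the only point requiring a little care is the claim that conjugacy, malnormality-flavoured, and proper-power conditions at the vertex $v$ are faithfully reflected between $G_v$ and $G_v \ast \mathbb{Z}$ — this rests on the standard facts that a free factor is retract-embedded (so two elements of $G_v$ are conjugate in $G_v \ast \mathbb{Z}$ iff they are conjugate in $G_v$, via the retraction $G_v \ast \mathbb{Z} \to G_v$) and that an element of $G_v$ is a proper power in $G_v \ast \mathbb{Z}$ iff it already is in $G_v$ (immediate from the Kurosh subgroup theorem, or from the retraction together with the fact that $G_v$ is root-closed in $G_v \ast \mathbb{Z}$, which in turn follows since $G_v$ is a vertex group of the splitting $G_v \ast \mathbb{Z}$ acting on its Bass--Serre tree and any root of an elliptic element is elliptic). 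Once those standard facts are invoked, the verification is the routine condition-by-condition check sketched above.
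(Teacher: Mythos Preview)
Your proposal is correct and is exactly the routine verification the paper has in mind: the paper does not actually give a proof of this lemma, merely prefacing it with ``One can check the following,'' and your condition-by-condition comparison (with the free-factor facts handling conditions (4) and (5) at $v$, and the non-surjectivity of $i\circ f_e$ onto $G_v\ast\mathbb{Z}$ disposing of (1) and (2) at $v$) is precisely that check.
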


We have the following commutative diagram:
\begin{equation}\label{cd}  \xymatrix{\autgd\ar[r]\ar[d]^{i_{\mathcal{G}}}&C(D_{*v})\ar[d]^{i_{F_n}}\\
\aut(\mathcal{G}',D'_{*v})\ar[r]&C(D'_{*v})} \end{equation}
where the vertical maps are given by stabilisation and the horizontal maps by $H\mapsto H_{*v}$. The vertical maps are clearly injective, which we can use to extend the results from Section~\ref{s:background}.

\begin{prop}\label{p:image}
Given $H'\in\aut(\mathcal G',v)$, we have $H'=i_\mathcal G(H)$ for some $H\in\aut(\mathcal G,v)$ if and only if $H_v'$ lies in the image of $i_{G_v}$. If additionally $H'\in\aut(\mathcal G',D_{*v}')$ (or $H'$ is a Dehn twist), then $H\in\aut(\mathcal G,D_{*v})$ (or $H$ is a Dehn twist respectively).
\end{prop}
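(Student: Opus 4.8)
The plan is to unwind the definition of the stabilisation map $i_\mathcal{G}$ and check that the obvious candidate for $H$ is well-defined. Recall that for $H \in \aut(\mathcal{G},v)$ the stabilised automorphism $H' = i_\mathcal{G}(H)$ has the same underlying graph automorphism $H_\G$, the same edge isomorphisms $H_e$, vertex isomorphisms $H_w' = H_w$ for $w \neq v$, $H_v' = i_{G_v}(H_v)$, and twisting elements $\delta'(e) = i(\delta(e))$ when $\tau(H_\G(e)) = v$ and $\delta'(e) = \delta(e)$ otherwise. So if we are handed $H' \in \aut(\mathcal{G}',v)$ with $H_v'$ in the image of $i_{G_v}$, the only possible preimage is the automorphism $H$ whose graph map, edge maps and vertex maps at $w \neq v$ agree with those of $H'$, whose vertex map at $v$ is the (unique, since $i_{G_v}$ is injective) automorphism $H_v$ of $G_v$ with $i_{G_v}(H_v) = H_v'$, and whose twisting elements are $\delta(e) = \delta'(e)$. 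The one thing needing proof is that this $H$ is genuinely an automorphism of $\mathcal{G}$, i.e. that it satisfies the compatibility relation \eqref{hed1}, and for that we also need to know that each $\delta(e)$ actually lies in the correct vertex group $G_{\tau(H_\G(e))}$ of $\mathcal{G}$ (not just of $\mathcal{G}'$).

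First I would dispose of the easy direction: if $H' = i_\mathcal{G}(H)$ then by construction $H_v' = i_{G_v}(H_v)$ lies in the image of $i_{G_v}$. For the converse, I would argue as follows. For edges $e$ with $\tau(H_\G(e)) \neq v$ nothing changes under stabilisation, so the relation \eqref{hed1} for $H$ is literally the relation for $H'$, and $\delta'(e) = \delta(e) \in G_{\tau(H_\G(e))}$ already. For edges $e$ with $\tau(H_\G(e)) = v$, the relation \eqref{hed1} for $H'$ reads
\[
i_{G_v}(H_v)\bigl(f_e'(a)\bigr) = \delta'(e)\, f_{H_\G(e)}'(H_e(a))\, \delta'(e)^{-1}
\]
for all $a \in G_e$; since $f_e' = i \circ f_e$ (as $\tau(e) = H_\G^{-1}(v)$ may or may not be $v$, but $f_{H_\G(e)}' = i \circ f_{H_\G(e)}$ because $\tau(H_\G(e)) = v$) and $i_{G_v}(H_v) \circ i = i \circ H_v$, this becomes $i\bigl(H_v(f_e(a))\bigr) = \delta'(e)\, i(f_{H_\G(e)}(H_e(a)))\, \delta'(e)^{-1}$. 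The key step — and the main obstacle — is to deduce from this identity, together with $\delta'(e) \in G_v' = G_v \ast \mathbb{Z}$, that in fact $\delta'(e) \in i(G_v)$, so that $\delta(e) := i_{G_v}^{-1}(\delta'(e))$ makes sense and the relation \eqref{hed1} for $H$ holds after applying the injection $i$. This is a statement about the free product $G_v \ast \mathbb{Z}$: if $i(x)$ and $i(y)$ are conjugate in $G_v \ast \mathbb{Z}$ by an element $g$, with $x, y$ nontrivial, then $g$ already lies in $i(G_v)$ — this is the standard fact that a factor of a free product is malnormal in it (conjugacy and intersection of conjugates of factor subgroups are controlled by the normal form / action on the Bass--Serre tree of the splitting $G_v \ast \mathbb{Z}$). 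One must check $f_e(a)$ is nontrivial, which holds because edge maps are injective and we only need the relation for nontrivial $a$ (for $a = 1$ it is vacuous); to run the malnormality argument cleanly it suffices to take a single nontrivial $a$, e.g.\ a generator of $G_e$, and conclude $\delta'(e) \in i(G_v)$, after which the relation for all $a$ follows by injectivity of $i$.

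Finally, the refinement: suppose additionally $H' \in \aut(\mathcal{G}',D_{*v}')$, i.e. $H_e'(z_e) = z_{H_\G(e)}$ for every edge $e$. Since the twistors of $D'$ are the same elements as the twistors of $D$ (the stabilisation leaves edge groups and the elements $\gamma_e$ unchanged) and $H_e' = H_e$, this is exactly the condition $H_e(z_e) = z_{H_\G(e)}$, so $H \in \aut(\mathcal{G},\widehat{D})$; combined with $H_\G(v) = v$ (which holds as $H' \in \aut(\mathcal{G}',v)$ and $H$ has the same graph map) we get $H \in \aut(\mathcal{G},D_{*v})$. If instead $H'$ is a Dehn twist, then $H_\G' = 1$, all $H_w' = 1$ and all $H_e' = 1$, with $\delta'(e) = f_e'(\gamma_e)$ for $\gamma_e \in Z(G_e)$; then $H_\G = 1$, $H_w = 1$ for $w \neq v$, $H_v = i_{G_v}^{-1}(1) = 1$, $H_e = 1$, and the preimages of the twisting elements are $\delta(e) = f_e(\gamma_e)$ (using $f_e' = i \circ f_e$ and injectivity of $i$), so $H$ is a Dehn twist on $\mathcal{G}$ with the same $\gamma_e$. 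This completes the argument.
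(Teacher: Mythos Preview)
Your proof is correct and follows essentially the same route as the paper: identify the candidate preimage $H$ from the data of $H'$, and use malnormality of $G_v$ in $G_v' = G_v \ast \mathbb{Z}$ together with the compatibility relation to force $\delta'(e) \in i(G_v)$ for the relevant edges. One small clean-up: since $H_\G(v)=v$ and $\tau(H_\G(e))=H_\G(\tau(e))$, the condition $\tau(H_\G(e))=v$ is equivalent to $\tau(e)=v$, so your parenthetical hedging (``may or may not be $v$'') is unnecessary---both $f_e'$ and $f_{H_\G(e)}'$ factor through $i$, exactly as you need.
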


\begin{proof}
If $H'=i_\mathcal{G}(H)$ for some $H \in \aut(\mathcal{G},v)$ then $H_v'=i_{\mathcal{G}_v}(H_v)$ by the definition of the map $i_\mathcal{G}$. 
 
Conversely suppose that $H'_v=i_{G_v}(H_v)$ for some $H_v \in \aut(G_v)$. We want to find $H \in \aut(\mathcal{G},v)$ with $i_\mathcal{G}(H)=H'$. We can build such a $H$ using the rest of the defining data of $H'$ as long as $\delta'(e) \in G_v\subset G_v'$ for each edge $e$ with $\tau(e)=v$. Suppose we have such an edge $e$ with edge group generated by $a$. Then by the compatibility condition for automorphisms of graphs of groups:
\begin{align*} \delta'(e) f'_{H'(e)}(H_e(a)) \delta'(e)^{-1} & = H'_v(f'_e(a)) \\ &= i_{G_v}(H_v)(f'_e(a)). \end{align*}
As $f_e'=i \circ f_e$ and $f_{H'(e)}'=i \circ f_{H'(e)}$, where $f_e$ and $f_{H'(e)}$ are the edge maps in $\mathcal{G}$, the elements $i_{G_v}(H_v)(f'_e(a_e))$ and $f_{H'(e)}'(H'_e(a_e))$ lie in $G_v\smallsetminus\{1\}$. As $G_v$ is malnormal in $G_v'$ this implies $\delta'(e)$ lies in $G_v$ also.

This proves the first sentence of the proposition. The other assertions follow because the edge group homomorphisms of $H$ and $H'=i_\mathcal G(H)$ agree.
\end{proof}

\subsection{Representing centralisers by abstract automorphisms}

\begin{proposition}
Suppose that $D$ is a  pointedly efficient Dehn twist on a graph of groups $\mathcal{G}$. Then the map $$V: \aut(\mathcal{G},D_{*v}) \to C(D_{*v})$$ given by $H\mapsto H_{*v}$ is surjective.
\end{proposition}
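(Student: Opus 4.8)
The plan is to bootstrap from the efficient case through the stabilisation square \eqref{cd}. Let $D'$ be the stabilisation of $D$; by Lemma~\ref{l:peffst} it is efficient, so Proposition~\ref{p:autgd} tells us that $H'\mapsto H'_{*v}$ maps $\aut(\mathcal{G}',D'_{*v})$ onto $C(D'_{*v})$. Given $\phi\in C(D_{*v})$, the element $i_{F_n}(\phi)$ commutes with $i_{F_n}(D_{*v})=D'_{*v}$ because $i_{F_n}$ is a homomorphism, so $i_{F_n}(\phi)\in C(D'_{*v})$; pick $H'\in\aut(\mathcal{G}',D'_{*v})$ with $H'_{*v}=i_{F_n}(\phi)$. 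The strategy is then to show that $H'$ necessarily lies in the image of the stabilisation map $i_{\mathcal{G}}$, apply Proposition~\ref{p:image} to obtain $H\in\aut(\mathcal{G},D_{*v})$ with $i_{\mathcal{G}}(H)=H'$, and conclude $H_{*v}=\phi$ from the commutativity of \eqref{cd} together with the injectivity of $i_{F_n}$.

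By Proposition~\ref{p:image} it is enough to check that $H'_v$ lies in the image of $i_{G_v}$, i.e.\ that $H'_v$ fixes the generator $x$ of the new free factor of $G'_v=G_v\ast\langle x\rangle$ and preserves $G_v$. Since $H'_\G(v)=v$, for any element $g\in G'_v$, regarded as a trivial loop at $v$, one has $H'_{*v}(g)=H'_v(g)$. Applying this with $g=x$ and using that $i_{F_n}(\phi)=H'_{*v}$ acts trivially on the new free factor $\langle x\rangle$ of $\pi_1(\mathcal{G}',v)=\pi_1(\mathcal{G},v)\ast\langle x\rangle$ gives $H'_v(x)=x$. Applying it with $g\in G_v\subseteq\pi_1(\mathcal{G},v)$ and using that $i_{F_n}(\phi)$ restricts to $\phi$ on $\pi_1(\mathcal{G},v)$ gives $H'_v|_{G_v}=\phi|_{G_v}$; in particular $H'_v(G_v)=\phi(G_v)$ is contained in both $G'_v$ and $\pi_1(\mathcal{G},v)$, hence in $G'_v\cap\pi_1(\mathcal{G},v)=G_v$ (the new factor $\langle x\rangle$ meets no edge group, so this intersection is immediate from the normal form in the free product).

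To promote the inclusion $H'_v(G_v)\subseteq G_v$ to an equality, note that since $H'_v$ is injective and fixes $\langle x\rangle$ it carries the free product $G'_v=G_v\ast\langle x\rangle$ onto $H'_v(G_v)\ast\langle x\rangle$; as $H'_v$ is surjective this forces $\phi(G_v)\ast\langle x\rangle=G_v\ast\langle x\rangle$, and applying the retraction $G'_v\to G_v$ that kills $x$ to both sides yields $\phi(G_v)=G_v$. Thus $H'_v$ preserves $G_v$ and fixes $x$, so $H'_v=i_{G_v}(H'_v|_{G_v})$. Proposition~\ref{p:image} now gives $H\in\aut(\mathcal{G},v)$ with $i_{\mathcal{G}}(H)=H'$, and, since $H'\in\aut(\mathcal{G}',D'_{*v})$, the same proposition gives $H\in\aut(\mathcal{G},D_{*v})$. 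Finally, commutativity of \eqref{cd} gives $i_{F_n}(H_{*v})=(i_{\mathcal{G}}H)_{*v}=H'_{*v}=i_{F_n}(\phi)$, whence $H_{*v}=\phi$.

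The only step I expect to require genuine care is the identification $G'_v\cap\pi_1(\mathcal{G},v)=G_v$ together with the free-product bookkeeping that upgrades $\phi(G_v)\subseteq G_v$ to $\phi(G_v)=G_v$; both hold because the $\mathbb{Z}$ adjoined in the stabilisation is a free factor of $G'_v$ disjoint from every edge group, so they are routine consequences of normal forms in $\pi_1(\mathcal{G}',v)=\pi_1(\mathcal{G},v)\ast\langle x\rangle$ — but they are precisely the places where the particular construction of $\mathcal{G}'$, rather than an arbitrary enlargement of $G_v$, is what makes the argument go through.
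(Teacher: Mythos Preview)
Your proof is correct and follows essentially the same route as the paper: stabilise, apply Proposition~\ref{p:autgd} to the efficient $D'$, verify that the resulting $H'_v$ lies in the image of $i_{G_v}$, invoke Proposition~\ref{p:image}, and descend via the commutative square~\eqref{cd}. The paper compresses your second and third paragraphs into the single line $H'_v=\phi'|_{G'_v}=i_{G_v}(\phi|_{G_v})$; the free-product bookkeeping you flag (that $G'_v\cap\pi_1(\mathcal{G},v)=G_v$ and that $\phi(G_v)=G_v$) is exactly what is needed to make that equation parse, so your extra care is justified rather than superfluous.
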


\begin{proof}
Let $\phi \in C(D_{*v})$ and let $\phi'=i_{F_n}(\phi)$. By Proposition~\ref{p:autgd}, the lower horizontal map in the commutative square \eqref{cd} is surjective, so there exists $H' \in \aut(\mathcal{G'},D'_{*v})$ such that $H_{*v}'=\phi'$. Restricting to the vertex group $G'_v=G_v \ast \mathbb{Z}$ gives: $$H'_v = \phi'|_{G'_v} = i_{G_v}(\phi|_{G_v}).$$  Proposition~\ref{p:image} then tells us that $H'=i_\mathcal{G}(H)$ for some $H \in \aut(\mathcal{G},D_{*v})$, and by commutativity of the square \eqref{cd} we have $H_{*v}=\phi$.
\end{proof}

\begin{proposition} \label{p:id}
Suppose that $D$ is a  pointedly efficient Dehn twist on a graph of groups $\mathcal{G}$ and $H \in \aut(\mathcal{G},D_{*v})$ with $H_{*v}=1$. Then:
\begin{enumerate}
\item $H_\G=1$.
\item For each vertex $w \neq v$ there exists $g_w \in G_w$ such that $H_w=ad(g_w)$.
\item $H_v=1$.
\end{enumerate}
\end{proposition}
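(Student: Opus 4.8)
The plan is to deduce this from the Bass--Jiang rigidity theorem (Theorem~\ref{t:BJ}) by passing to the stabilisation. Let $\mathcal{G}'$ be the stabilisation of $\mathcal{G}$ and put $H'=i_{\mathcal{G}}(H)$; directly from the definition of the stabilisation homomorphism $i_{\mathcal{G}}$ we have $H'\in\aut(\mathcal{G}',D'_{*v})$ and $H'_v=i_{G_v}(H_v)$. Since $D$ is pointedly efficient, Lemma~\ref{l:peffst} gives that $D'$ is efficient, so $\pi_1(\mathcal{G}',v)\cong F_{n+1}$ and the action of $\pi_1(\mathcal{G}',v)$ on $T_{\mathcal{G}'}$ is minimal and non-abelian; thus Theorem~\ref{t:BJ} is available for $\mathcal{G}'$. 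Commutativity of the square~\eqref{cd} yields $H'_{*v}=i_{F_n}(H_{*v})=i_{F_n}(1)=1$, so $\widehat{H'}=1$ and $H'\in\ker U$.

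Theorem~\ref{t:BJ} now gives $H'_\G=1$ and, for every vertex $u$ of $\G$, an element $g_u\in G'_u$ with $H'_u=\ad(g_u)$. Because stabilisation alters neither the underlying graph nor its automorphism, $H_\G=H'_\G=1$, giving (1); because it alters neither the vertex groups away from $v$ nor the corresponding vertex maps, for $w\neq v$ we get $H_w=H'_w=\ad(g_w)$ with $g_w\in G_w$, giving (2).

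For (3) we analyse $H'_v=\ad(g_v)$, where now $g_v\in G'_v=G_v\ast\mathbb{Z}$. On the one hand $H'_v=i_{G_v}(H_v)$ fixes a generator $t$ of the adjoined $\mathbb{Z}$ factor, so $g_v$ lies in the centraliser of $t$ in $G_v\ast\mathbb{Z}$, which is $\langle t\rangle$. On the other hand $H'_v$ preserves $G_v$, so $g_v$ normalises $G_v$; as $G_v$ is malnormal in $G_v\ast\mathbb{Z}$ (a fact already used in the proof of Proposition~\ref{p:image}), its normaliser equals $G_v$, so $g_v\in G_v$. Hence $g_v\in G_v\cap\langle t\rangle=\{1\}$, whence $H'_v=1$ and therefore $H_v=H'_v|_{G_v}=1$, proving (3). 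The only step that is not pure bookkeeping with the stabilisation maps is this last paragraph, namely identifying the centraliser of $t$ and the normaliser of $G_v$ inside the free product $G_v\ast\mathbb{Z}$, and even that is routine; the main point of the argument is simply that stabilising converts the pointedly efficient $D$ into an efficient Dehn twist so that the Bass--Jiang machinery applies.
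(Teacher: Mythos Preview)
Your argument for parts (1) and (2) is exactly the paper's: stabilise, use the commutative square~\eqref{cd} to see $H'_{*v}=1$, then invoke Theorem~\ref{t:BJ} on $\mathcal{G}'$ and pull the conclusions back along $i_{\mathcal{G}}$.

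For part (3), however, you take an unnecessarily indirect route. Your free-product analysis---$g_v$ centralises $t$ so lies in $\langle t\rangle$, and $g_v$ normalises $G_v$ so lies in $G_v$ by malnormality---is correct (provided $G_v\neq 1$, which holds since every edge group is infinite cyclic and injects into its adjacent vertex groups). But the paper dispatches (3) in one line without stabilising at all: by the very definition of the action of $\aut(\mathcal{G})$ on $\Pi(\mathcal{G})$, the map $H_{*v}$ restricts to $H_v$ on $G_v\subset\pi_1(\mathcal{G},v)$, so $H_{*v}=1$ forces $H_v=1$ immediately. Your approach works but obscures this simple structural fact; the paper's observation is worth internalising, since the identity $H_v=H_{*v}|_{G_v}$ is used again later (for instance in the proof of the surjectivity statement preceding Proposition~\ref{p:id}).
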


\begin{proof}
Following around the commutative square \eqref{cd} we see that $H_{*v}' = i_\mathcal{G}(H)_{*v}=i_{F_n}(H_{*v})=1$. As $T_{\mathcal{G}'}$ is minimal and non-abelian, Theorem~\ref{t:BJ} tells us that $H'_\G=1$ and $H'_w$ is inner for all $w \in V(\G)$. However $H'_\G=H_\G$ and $H'_w=H_w$ for all $w \neq v$. This proves parts (1) and (2). For part (3), we observe that $H_v$ is the restriction of $H_{*v}$ to the vertex group $G_v$, so must be the identity automorphism.
\end{proof}

\subsection{The pointed version of the exact sequence}

Part (1) of Proposition~\ref{p:id} implies that two different elements of $\aut(\mathcal{G},D_{*v})$ representing the same element of $C(D_{*v})$ must act on $\G$ in the same way. Hence:

\begin{proposition}\label{p:pgraph}
Let $D$ be a pointedly efficient Dehn twist on a graph of groups $\mathcal{G}$. Then the  homomorphism $$\beta: \aut(\mathcal{G},D_{*v}) \to \aut(\G,v)$$ given by $H \mapsto H_\G$ descends to a homomorphism \[\pushQED{\qed} \bar{\beta} : C(D_{*v}) \to \aut(\G,v).\qedhere\popQED\]
\end{proposition}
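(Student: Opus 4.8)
The plan is to mimic the argument used for Proposition~\ref{p:graph}, replacing the role of Bass--Jiang's Theorem~\ref{t:BJ}(1) with part (1) of Proposition~\ref{p:id}. Concretely, I want to show that the composite $\aut(\mathcal{G},D_{*v}) \xrightarrow{\beta} \aut(\G,v)$ is constant on the fibres of the surjection $V\co \aut(\mathcal{G},D_{*v}) \to C(D_{*v})$; once this is established, the universal property of the quotient (or rather, the fact that $V$ is surjective) produces the desired factorisation $\bar{\beta}$ with $\bar{\beta}\circ V = \beta$, and $\bar{\beta}$ is automatically a homomorphism because $V$ and $\beta$ are.

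First I would recall that by the Proposition preceding Proposition~\ref{p:id}, the map $V\co \aut(\mathcal{G},D_{*v}) \to C(D_{*v})$, $H \mapsto H_{*v}$, is surjective. So it suffices to check: if $H, H' \in \aut(\mathcal{G},D_{*v})$ satisfy $H_{*v} = H'_{*v}$, then $H_\G = H'_\G$. Consider $K = H^{-1}.H' \in \aut(\mathcal{G},D_{*v})$ (this subgroup is indeed a group, being the basepoint-fixing subgroup of $\aut(\mathcal{G},\widehat{D})$). Since automorphisms act on the left and $V$ is a homomorphism on $\aut(\mathcal{G},v) \supseteq \aut(\mathcal{G},D_{*v})$ by Lemma~\ref{l:aut}, we get $K_{*v} = (H^{-1})_{*v}H'_{*v} = (H_{*v})^{-1}H'_{*v} = 1$. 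Now Proposition~\ref{p:id}(1) applies to $K$ and gives $K_\G = 1$. But $K_\G = (H_\G)^{-1}H'_\G$ by the multiplication formula for $\aut(\mathcal{G})$, so $H_\G = H'_\G$, as required.

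Having this, I would define $\bar{\beta}\co C(D_{*v}) \to \aut(\G,v)$ by choosing, for each $\phi \in C(D_{*v})$, a preimage $H \in \aut(\mathcal{G},D_{*v})$ with $H_{*v} = \phi$ and setting $\bar{\beta}(\phi) = H_\G$; the previous paragraph shows this is well-defined, independent of the choice of $H$. It is a homomorphism: given $\phi_1, \phi_2$ with preimages $H_1, H_2$, the product $H_1.H_2$ is a preimage of $\phi_1\phi_2$, so $\bar{\beta}(\phi_1\phi_2) = (H_1.H_2)_\G = (H_1)_\G (H_2)_\G = \bar{\beta}(\phi_1)\bar{\beta}(\phi_2)$. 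By construction $\bar{\beta}\circ V = \beta$, so $\beta$ descends to $\bar{\beta}$ in the sense claimed.

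There is no serious obstacle here: the entire content has been front-loaded into Proposition~\ref{p:id}(1), which in turn rests on the stabilisation trick and the Bass--Jiang theorem applied to the (efficient, hence minimal and non-abelian) stabilised graph of groups $\mathcal{G}'$. The only point requiring a word of care is that the relevant maps are genuinely group homomorphisms on the basepoint-fixing subgroup $\aut(\mathcal{G},D_{*v})$ — this is covered by Lemma~\ref{l:aut} for $V$ and by the explicit multiplication formula for $\aut(\mathcal{G})$ for $\beta$ — and that $\aut(\mathcal{G},D_{*v})$ is closed under products and inverses, which is immediate from its definition as the intersection of two subgroups of $\aut(\mathcal{G})$.
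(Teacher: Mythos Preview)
Your proposal is correct and follows exactly the paper's approach: the paper simply states that part (1) of Proposition~\ref{p:id} implies two elements of $\aut(\mathcal{G},D_{*v})$ with the same image in $C(D_{*v})$ must have the same underlying graph automorphism, and you have spelled out the details of this implication. The extra care you take in verifying that $\bar\beta$ is a well-defined homomorphism is routine but entirely appropriate.
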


Let $\aut^0(\mathcal{G},D_{*v})= \ker \beta$ and let $C^0(D_{*v})= \ker \bar \beta$.

\begin{proposition}
Let $D$ be a  pointedly efficient Dehn twist on a graph of groups $\mathcal{G}$. Then there exists a surjective homomorphism $$B: \aut^0(\mathcal{G},D_{*v}) \to \aut(G_v,\mathcal{C}_v) \oplus \bigoplus_{w \neq v} \out(G_w,\mathcal{C}_w)$$ given by $H \mapsto (H_v, (\widehat{H_w})_{w \neq v})$  which descends to a homomorphism $$\bar{B} : C^0(D_{*v}) \to \aut(G_v,\mathcal{C}_v) \oplus \bigoplus_{w \neq v} \out(G_w,\mathcal{C}_w).$$
\end{proposition}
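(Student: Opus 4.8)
The plan is to transfer the argument of Lemma~\ref{l:dt} to the pointed setting and then feed in Proposition~\ref{p:id} to get the descent. First I would verify that $B$ is well defined, i.e. that for $H\in\aut^0(\mathcal{G},D_{*v})$ the vertex automorphism $H_w$ lies in $\aut(G_w,\mathcal{C}_w)$ for every $w$. Since $H_\G=1$ and $H$ preserves twistors we have $H_e(z_e)=z_{H(e)}=z_e$ for each edge $e$; recalling that the edge groups are infinite cyclic (they are unchanged by stabilisation, and the stabilisation $D'$ is efficient by Lemma~\ref{l:peffst}, with $\pi_1(\mathcal{G}',v)$ free) and that $z_e\neq1$ by condition (3) of Definition~\ref{efficient}, this forces $H_e=1$. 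Then the compatibility condition \eqref{hed2} reads $H_{\tau(e)}(f_e(a))=\delta(e)f_e(a)\delta(e)^{-1}$ for all $a\in G_e$, and running this over all $e$ with $\tau(e)=w$ shows $H_w$ fixes every conjugacy class in $\mathcal{C}_w$. In particular $H_v\in\aut(G_v,\mathcal{C}_v)$ and $\widehat{H_w}\in\out(G_w,\mathcal{C}_w)$ for $w\neq v$, so $B$ lands where claimed. That it is a homomorphism is then formal: since $H'_\G=1$ for $H'\in\aut^0(\mathcal{G},D_{*v})$, the multiplication rule in $\aut(\mathcal{G})$ gives $(H.H')_w=H_wH'_w$ for every $w$, and composing the coordinate maps at the vertices $w\neq v$ with $\aut(G_w)\to\out(G_w)$ is again a homomorphism.

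For surjectivity I would reverse this construction, exactly as in the proof of Lemma~\ref{l:dt}. Given $(\psi_v,(\psi_w)_{w\neq v})$ in the target, choose a lift $\tilde\psi_w\in\aut(G_w,\mathcal{C}_w)$ of $\psi_w$ for each $w\neq v$ — possible because $\out(G_w,\mathcal{C}_w)$ coincides with the image of $\aut(G_w,\mathcal{C}_w)$ in $\out(G_w)$, inner automorphisms preserving all conjugacy classes — and set $\tilde\psi_v=\psi_v$. For each edge $e$ with $\tau(e)=w$ the automorphism $\tilde\psi_w$ fixes $[f_e(a_e)]$, so pick $\delta(e)\in G_w$ with $\tilde\psi_w(f_e(a_e))=\delta(e)f_e(a_e)\delta(e)^{-1}$; as $a_e$ generates $G_e$ this identity then holds on all of $G_e$. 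Taking $H_\G=1$, $H_w=\tilde\psi_w$, trivial edge automorphisms, and twisting data $(\delta(e))_{e\in E(\G)}$ defines an automorphism $H$ of $\mathcal{G}$ (the condition \eqref{hed2} holds by the choice of $\delta(e)$); it preserves twistors since $H_e=1$ and $H(e)=e$, and it fixes $v$, so $H\in\aut^0(\mathcal{G},D_{*v})$, and by construction $B(H)=(\psi_v,(\psi_w)_{w\neq v})$.

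It remains to descend $B$ to $C^0(D_{*v})$. The map $q\co\aut^0(\mathcal{G},D_{*v})\to C^0(D_{*v})$, $H\mapsto H_{*v}$, is surjective: any $\phi\in C^0(D_{*v})$ lifts to some $H\in\aut(\mathcal{G},D_{*v})$ since $\aut(\mathcal{G},D_{*v})\to C(D_{*v})$ is surjective (shown above), and then $1=\bar\beta(\phi)=\beta(H)=H_\G$ puts $H$ in $\aut^0(\mathcal{G},D_{*v})$. So it suffices to check $\ker q\subseteq\ker B$, i.e. that $H\in\aut^0(\mathcal{G},D_{*v})$ with $H_{*v}=1$ has $B(H)=1$; but this is precisely Proposition~\ref{p:id}, which gives $H_v=1$ and $H_w=\ad(g_w)$ for $w\neq v$, so $B(H)=(1,(\widehat{\ad(g_w)})_{w\neq v})=1$. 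Hence $B$ descends to a homomorphism $\bar{B}\co C^0(D_{*v})\to\aut(G_v,\mathcal{C}_v)\oplus\bigoplus_{w\neq v}\out(G_w,\mathcal{C}_w)$. No single step is a real obstacle; the one thing to appreciate is the asymmetry between $v$ and the other vertices — at $v$ the honest automorphism $H_v$ descends, while at $w\neq v$ only its outer class does — and this asymmetry is exactly the content of Proposition~\ref{p:id} (which in turn rests on the minimality and non-abelianness of the action on $T_{\mathcal{G}'}$), so that the statement is genuinely more than the pointed restriction of Lemma~\ref{l:dt}.
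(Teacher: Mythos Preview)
Your proof is correct and follows essentially the same approach as the paper: the paper explicitly says that the image of $B$ is computed by following the proof of Lemma~\ref{l:dt} (with details omitted), and that descent to $C^0(D_{*v})$ comes from parts (2) and (3) of Proposition~\ref{p:id}. You have simply written out those omitted details carefully, including the justification that edge groups are infinite cyclic via stabilisation and Lemma~\ref{l:peffst}, and the verification that the surjection $\aut^0(\mathcal{G},D_{*v})\to C^0(D_{*v})$ exists so that the descent argument goes through.
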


\begin{proof}
The mapping $H \mapsto (H_v, (\widehat{H_w})_{w \neq v})$ defines a homomorphism $$B\co \aut^0(\mathcal{G},D_{*v}) \to \aut(G_v) \oplus \bigoplus_{w\neq v} \out(G_w).$$ To show that the image of $B$ is the group $\aut(G_v,\mathcal{C}_v) \oplus \bigoplus_{w \neq v} \out(G_w,\mathcal{C}_w)$, one follows the proof of Lemma~\ref{l:dt}, and we shall omit the details. The map descends to $C^0(D_{*v})$ by parts (2) and (3) of Proposition~\ref{p:id}.
\end{proof}

Next we show that $DA(\mathcal G)$ is the same for pointedly efficient case:

\begin{prop}
If $\mathcal G$ admits a pointedly efficient Dehn twist, then $DA(\mathcal G)$ is free abelian of rank equal to the number of geometric edges of the underlying graph $\G$.
\end{prop}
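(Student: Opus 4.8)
The plan is to reduce the pointedly efficient case to the efficient case via the stabilisation construction, exactly as in the proof of the preceding two propositions. Recall from Lemma~\ref{l:peffst} that the stabilisation $D'$ of $D$ on $\mathcal{G}'$ is an efficient Dehn twist, that $\mathcal{G}'$ has the same underlying graph $\G$ as $\mathcal{G}$, and hence the same number of geometric edges. By Proposition~\ref{p:dt2}, $DA(\mathcal{G}')$ is free abelian of rank equal to the number of geometric edges of $\G$, since $\mathcal{G}'$ satisfies condition $(*)$ of Proposition~\ref{p:dt} and its edge groups are infinite cyclic (these being the standard consequences of efficiency recorded after Definition~\ref{efficient}). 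So it suffices to show that the stabilisation map identifies $DA(\mathcal{G})$ with $DA(\mathcal{G}')$.

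First I would record that the subgroup of Dehn twists of $\aut(\mathcal{G})$ maps, under $i_{\mathcal G}$, into the subgroup of Dehn twists of $\aut(\mathcal{G}')$: this is immediate from the definition of $i_{\mathcal G}$, since a Dehn twist $D_0=(\mathcal{G},(\gamma_e))$ has trivial graph, vertex and edge automorphisms and twisting factors $\delta(e)=f_e(\gamma_e)$, and stabilisation replaces $\delta(e)$ by $i(\delta(e))=f'_e(\gamma_e)$ when $\tau(e)=v$ and leaves it unchanged otherwise, so $i_{\mathcal G}(D_0)$ is the Dehn twist on $\mathcal{G}'$ defined by the same elements $(\gamma_e)$. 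Passing to $\aut(\pi_1(\mathcal{G}',v))\cong\aut(F_{n+1})$ via $V$ and using the commutativity of the square analogous to \eqref{cd} for the full automorphism groups, we get a commutative square whose vertical maps are the (injective) stabilisation homomorphisms $i_{F_n}$ and whose horizontal maps send Dehn twists of the graph of groups onto $DA(\mathcal{G})$, respectively $DA(\mathcal{G}')$. Since $i_{F_n}$ is injective, $i_{F_n}$ restricts to an injection $DA(\mathcal{G})\hookrightarrow DA(\mathcal{G}')$, so $DA(\mathcal{G})$ is free abelian of rank \emph{at most} the number of geometric edges.

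The main obstacle is the reverse inclusion: I must show every Dehn twist of $\aut(\mathcal{G}')$ is, up to the relevant identifications, the stabilisation of a Dehn twist of $\aut(\mathcal{G})$ — i.e.\ that $i_{F_n}(DA(\mathcal{G}))=DA(\mathcal{G}')$. This follows from Proposition~\ref{p:image}: if $H'$ is a Dehn twist on $\mathcal{G}'$, then all its vertex automorphisms are trivial, in particular $H'_v=1=i_{G_v}(1)$ lies in the image of $i_{G_v}$, so $H'=i_{\mathcal G}(H)$ for some $H\in\aut(\mathcal{G},v)$, and the last assertion of Proposition~\ref{p:image} guarantees $H$ is itself a Dehn twist. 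Hence the image of the Dehn twist subgroup of $\aut(\mathcal{G}')$ in $\aut(F_{n+1})$ — namely $DA(\mathcal{G}')$ — lies in $i_{F_n}(DA(\mathcal{G}))$, giving equality. Combining with Proposition~\ref{p:dt2} applied to the efficient Dehn twist $D'$ on $\mathcal{G}'$, we conclude that $DA(\mathcal{G})\cong DA(\mathcal{G}')$ is free abelian of rank equal to the number of geometric edges of $\G$.
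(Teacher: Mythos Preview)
Your proof is correct and follows essentially the same approach as the paper: reduce to the efficient case via stabilisation (Lemma~\ref{l:peffst}), then show that $i_{F_n}$ restricts to an isomorphism $DA(\mathcal{G})\to DA(\mathcal{G}')$ by using injectivity of $i_{F_n}$ for one direction and Proposition~\ref{p:image} (applied with $H'_v=1$) for surjectivity. Your write-up is slightly more explicit in verifying that $i_{\mathcal G}$ carries Dehn twists to Dehn twists, but the argument is the same.
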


\begin{proof}
If $D$ is pointedly efficient on $\mathcal G$, then its stabilisation $D'$ is efficient on $\mathcal G'$. Since $DA(\mathcal G')$ is free abelian of the desired rank by Proposition \ref{p:dt2}, we only have to show that the stabilisation homomorphism $i_{F_n}:\aut(\pi_1(\mathcal G,v))\to\aut(\pi_1(\mathcal G',v))$ induces an isomorphism $DA(\mathcal G)\cong DA(\mathcal G')$.

Whenever $H$ is a Dehn twist on $\mathcal G$, then $i_{F_n}(H_{*v})=(i_\mathcal G(H))_{*v}$ lies in $DA(\mathcal G')$ because $i_\mathcal G(H)$ is a Dehn twist. Therefore $i_{F_n}(DA(\mathcal G))\subset DA(\mathcal G')$.

Conversely, if $H'$ is a Dehn twist on $\mathcal G'$, then Proposition \ref{p:image} shows $H'=i_\mathcal G(H)$ for some Dehn twist $H$ on $\mathcal G$, and $H_{*v}'=i_{F_n}(H_{*v})$. Therefore the map $DA(\mathcal G)\to DA(\mathcal G')$ is surjective. Finally, it is injective because it is the restriction of the injection $i_{F_n}$.
\end{proof}

\begin{proposition}
The kernel of $\bar{B}$ is the group $DA(\mathcal G)$ of Dehn twists.
\end{proposition}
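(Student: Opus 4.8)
The plan is to identify the kernel of $\bar B$ with $DA(\mathcal G)$ by a two-part argument: first that every element of $DA(\mathcal G)$ lies in the kernel, and second that the kernel contains nothing more. For the first inclusion, let $D''$ be a Dehn twist on $\mathcal G$. Its image $D''_{*v}$ lies in $C(D_{*v})$ (Dehn twists on a fixed graph of groups commute, since multiplication is given by multiplying twistors edge-by-edge), and its graph automorphism is trivial so it lies in $C^0(D_{*v})$. A Dehn twist has trivial vertex automorphisms, so $B(D'')=(1,(1)_{w\neq v})$, hence $\bar B$ kills it. Thus $DA(\mathcal G)\subseteq\ker\bar B$.

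For the reverse inclusion, take $\phi\in\ker\bar B$ and lift it to $H\in\aut^0(\mathcal G,D_{*v})$ with $B(H)=(1,(1)_{w\neq v})$, so $H_v=1$ and each $H_w$ ($w\neq v$) is inner. The goal is to show $\phi=H_{*v}$ lies in $DA(\mathcal G)$. The natural route is via the stabilisation, exactly as in the preceding propositions: apply $i_{\mathcal G}$ to get $H'=i_{\mathcal G}(H)\in\aut^0(\mathcal G',D'_{*v})$, where $D'$ is efficient by Lemma~\ref{l:peffst}. We have $H'_v=i_{G_v}(H_v)=1$ and $H'_w=H_w$ is inner for $w\neq v$, so $A(H')=(1)_{w\in V(\G')}$ in $\bigoplus_w\out(G_w)$; that is, the image of $H'$ in $\out(\pi_1(\mathcal G',v))$ lies in $\ker\bar A$. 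By Theorem~\ref{t:vertex} (whose hypotheses hold for the efficient Dehn twist $D'$: the action on $T_{\mathcal G'}$ is minimal and non-abelian, $\pi_1(\mathcal G',v)\cong F_{n+1}$ is centreless, and edge images are malnormal), this kernel is $DO(\mathcal G')$. Hence $\widehat{H'_{*v}}\in DO(\mathcal G')$, so $H'_{*v}$ agrees, up to an inner automorphism of $\pi_1(\mathcal G',v)$, with a Dehn twist $D'_0$ on $\mathcal G'$.

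The main obstacle is upgrading this to the level of $\aut$ rather than $\out$ and then descending through the stabilisation. First one must absorb the ambiguous inner automorphism: since $H'_{*v}$ fixes the basepoint conjugacy class and $D'$ is pointedly efficient at $v$ after all — actually $D'$ is efficient — one argues that the inner factor is in fact trivial, because $H'_{*v}$ fixes the vertex group $G'_v$ pointwise ($H'_v=1$) and $G'_v$ is malnormal and of rank $\geq 2$ in $\pi_1(\mathcal G',v)$, so only the trivial conjugator commutes with all of $G'_v$; this forces $H'_{*v}=(D'_0)_{*v}\in DA(\mathcal G')$. Then by the previous proposition $i_{F_n}$ restricts to an isomorphism $DA(\mathcal G)\xrightarrow{\sim}DA(\mathcal G')$, and $i_{F_n}(\phi)=i_{F_n}(H_{*v})=H'_{*v}\in DA(\mathcal G')$; injectivity of $i_{F_n}$ and surjectivity of $DA(\mathcal G)\to DA(\mathcal G')$ give $\phi\in DA(\mathcal G)$. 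Combined with the first inclusion, $\ker\bar B=DA(\mathcal G)$. The point requiring care is the malnormality argument eliminating the inner factor; alternatively one can invoke Proposition~\ref{p:dt} applied to $\mathcal G'$ directly, since condition $(*)$ holds for the efficient graph of groups, to conclude that the Dehn twist representative is unique and its $\aut$-class is pinned down by fixing $G'_v$ pointwise.
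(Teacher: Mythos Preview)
Your argument is correct and essentially mirrors the paper's proof: stabilise to the efficient $\mathcal G'$, use Theorem~\ref{t:vertex} to land in $DO(\mathcal G')$, eliminate the inner ambiguity by observing that both $H'_{*v}$ and the Dehn twist fix $G'_v$ pointwise and that $G'_v$ has rank $\geq 2$ (hence trivial centraliser in the free group $\pi_1(\mathcal G',v)$---malnormality is a red herring here), then pull back via $DA(\mathcal G)\cong DA(\mathcal G')$. The only cosmetic difference is that you lift $\phi$ to $H\in\aut^0(\mathcal G,D_{*v})$ before stabilising whereas the paper stabilises $\phi$ directly via $i_{F_n}$; these routes commute by the square~\eqref{cd}.
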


\begin{proof}
Since Dehn twists have trivial vertex group automorphisms, the inclusion $DA(\mathcal G)\subset\ker\bar B$ is clear. 

Let now $\phi\in C^0(D_{*v})$ satisfy $\bar B(\phi)=1$. We write $\phi'=i_{F_n}(\phi)$ and $\widehat{\phi'}$ for its outer automorphism class. By commutativity of the right hand square in the diagram
$$\xymatrix{
\aut^0(\mathcal{G},D_{*v})\ar@{->>}[r]\ar[d]&C^0(D_{*v})\ar@{->>}[r]^-{\bar B}\ar@{^(->}[d]^{i_{F_n}}&\aut(G_v,\C_v)\oplus\bigoplus_{w\neq v}\out(G_w,\C_w)\ar@{->}[d]^{i_{G_v}\oplus1}\\
\aut^0(\mathcal{G}',\widehat{D})\ar@{->>}[r]&C^0(\widehat{D'})\ar@{->>}[r]^-{\bar A'}&\bigoplus_{w \in V(\G)}\out(G'_w,\C_w)}$$
we have $\widehat{\phi'}\in\ker\bar A'$. As $D'$ is efficient, Theorem \ref{t:vertex} shows $\ker\bar A'=DO(\mathcal G')$. Therefore we have a Dehn twist $H'$ of $\mathcal G'$ such that $\widehat{\phi'}=\widehat{H'}$ and $\phi'=\ad_W\circ H_{*v}'$ for some $W\in\pi_1(\mathcal G',v)$. Since $\bar B(\phi)=1$, we have $\phi|_{G_v}=1$ and $\phi'|_{G_v'}=i_{G_v}(\phi|_{G_v})=1$. Since $H'$ is a Dehn twist, we also have $H_v'=1$, and we see that $\ad_W=1$ on $G_v'$. As $G_v'$ has rank at least two, we conclude $W=1$, so $i_{F_n}(\phi)=\phi'=H_{*v}'$. Proposition \ref{p:image} provides a Dehn twist $H$ of $\mathcal G$ such that $i_\mathcal G(H)=H'$. We now have
$$i_{F_n}(\phi)=\phi'=H_{*v}'=(i_\mathcal G(H))_{*v}=i_{F_n}(H_{*v}).$$
By injectivity of $i_{F_n}$ this implies $\phi=H_{*v}$, so $\phi\in DA(\mathcal G)$. Hence $\ker\bar B\subset DA(\mathcal G)$.
\end{proof}

Combining the propositions in this section gives our main theorem for Dehn twist automorphisms in $\aut(F_n)$:

\begin{theorem}\label{t:pses}
Let $D$ be a pointedly efficient Dehn twist on a graph of groups $\mathcal{G}$ with $\pi_1(\mathcal{G},v)\cong F_n$. Let $C(D_{*v})$ be the centraliser of $D$ in $\autn$. There exists a homomorphism
$$\bar{\beta}:C(D_{*v})\to\aut(\G,v)$$
with kernel $C^0(D_{*v})$ a finite index subgroup fitting into the exact sequence
$$1\to DA(\mathcal{G}) \to C^0(D_{*v}) \to \aut(G_v,\C_v)\oplus\bigoplus_{w\neq v}\out(G_w,\C_w) \to1,$$
where $DA(\mathcal{G})$ is a free abelian group of Dehn twists of rank equal to the number of geometric edges of $\mathcal{G}$. \qed
\end{theorem}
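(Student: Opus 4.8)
The plan is simply to assemble the propositions established in this subsection. The homomorphism $\bar\beta\co C(D_{*v})\to\aut(\G,v)$ and the fact that its kernel is $C^0(D_{*v})$ are exactly the content of Proposition~\ref{p:pgraph} together with the definition $C^0(D_{*v})=\ker\bar\beta$. For the assertion that the index is finite, I would observe that $\bar\beta$ factors through an injection $C(D_{*v})/C^0(D_{*v})\hookrightarrow\aut(\G,v)$, and that $\aut(\G,v)$ is finite because $\G$ is a finite graph; hence $[C(D_{*v}):C^0(D_{*v})]\le|\aut(\G,v)|<\infty$.

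For the short exact sequence I would use the last three propositions of the subsection. Dehn twists of $\mathcal G$ act trivially on $\G$ and form an abelian subgroup of $\aut(\mathcal G)$ (twistors multiply componentwise inside the centres of the edge groups), so $D_{*v}$ commutes with everything in $DA(\mathcal G)$ and therefore $DA(\mathcal G)\subseteq C^0(D_{*v})$; this inclusion is injective, being a restriction of the inclusion of subgroups of $\aut(F_n)$. The proposition giving the surjective homomorphism $\bar B\co C^0(D_{*v})\to\aut(G_v,\C_v)\oplus\bigoplus_{w\neq v}\out(G_w,\C_w)$ supplies exactness on the right, and the proposition identifying $\ker\bar B$ with $DA(\mathcal G)$ supplies exactness in the middle; together these yield the claimed exact sequence with $DA(\mathcal G)\to C^0(D_{*v})$ the inclusion. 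Finally, the rank statement is precisely the proposition asserting that $DA(\mathcal G)$ is free abelian of rank equal to the number of geometric edges of $\G$, which applies since $\mathcal G$ carries the pointedly efficient Dehn twist $D$.

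Since every piece has already been proved, there is no real obstacle remaining: the theorem is essentially a bookkeeping statement that packages the preceding results into one diagram. The only point worth spelling out is the finiteness of the index, and that is immediate from the finiteness of $\aut(\G,v)$ for a finite graph $\G$.
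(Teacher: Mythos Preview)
Your proposal is correct and matches the paper's own proof, which simply says the theorem follows by ``combining the propositions in this subsection.'' You have spelled out precisely which propositions are combined (Proposition~\ref{p:pgraph} for $\bar\beta$, the surjectivity of $\bar B$, the identification $\ker\bar B=DA(\mathcal G)$, and the rank computation for $DA(\mathcal G)$), and your added remark on finite index via the finiteness of $\aut(\G,v)$ is the intended justification.
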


\begin{remark}
The same reasoning as Corollary \ref{c:fpout} can also be used to show that the centraliser of any Dehn twist automorphism $\phi\in\aut(F_n)$ is of type VF. To see this, we first represent $\phi$ by some pointedly efficient Dehn twist. Then we use the exact sequence $1\to G_v\to\aut(G_v,\C_v)\to\out(G_v,\C_v)\to1$ to find a finite index subgroup of $\aut(G_v,\C_v)$ with a finite classifying space. Then we proceed as in Section \ref{s:finprop}.
\end{remark}

\section{Computing finite presentations}\label{s:alg}

In this section we describe how we can algorithmically determine a presentation for the centraliser of a Dehn twist automorphism in $\aut(F_n)$ or $\out(F_n)$. We have a description of a centraliser in terms of short exact sequences, and will use these to build the presentation.

\subsection{Short exact sequences}\label{s:ses}
 For this section, we will suppose that
$$1\to A\xrightarrow{\iota} B\xrightarrow{\pi} C\to1$$ is a short exact sequence of groups. Furthermore, suppose that $A$ and $C$ have finite presentations $A = \langle X | R \rangle$ and $C= \langle Z | S \rangle$, so that:
$$1\to \langle X | R \rangle\xrightarrow{\iota} B\xrightarrow{\pi} \langle Z | S \rangle\to1.$$

Let $\iota(X)$ be the image of $X$ in $B$ and let $\tilde Z$ be a subset of $B$ mapped bijectively to $Z$ under $\pi$. We use $\tilde z$ to denote the element of $\tilde Z$ mapped to $z \in Z$ under $\pi$. In this section we will use $w$, or other lower case letters, to denote words in the free group $F(\iota(X) \sqcup \tilde Z)$ with the basis  $\iota(X) \sqcup \tilde Z$ and $[w]$ to denote the element of $B$ determined by the word $w$. There are three types of relations in $B$ that we will make use of.

\begin{itemize}
\item \textbf{Kernel relators.} Suppose that $r=x_1^{\epsilon_1} \cdots x_k^{\epsilon_k}$ is an element of $R$, with each $x_i \in X$ and $\epsilon_i = \pm 1$. Let $$\bar{r}=\iota(x_1)^{\epsilon_1} \cdots \iota(x_k)^{\epsilon_k}.$$ Then $[\bar{r}]=1$ and we say that $\bar{r}$ is a \emph{kernel relator} in $F(\iota(X) \sqcup \tilde Z)$.
\item \textbf{Lifted relators.} Suppose that $s=z_1^{\epsilon_1} \cdots z_k^{\epsilon_k}$ is an element of $S$, with each $z_i \in Z$ and $\epsilon_i = \pm 1$. Let $\tilde s = \tilde z_1^{\epsilon_1} \cdots \tilde z_k^{\epsilon_k}$. As $\pi([\tilde s])=1$, it follows that $[\tilde s] \in \iota(A)$, so there exists a word $w_s=\iota(x_1)^{\epsilon_1} \cdots \iota(x_l)^{\epsilon_l}$ in $F(\iota(X))$ such that $[w_s]=[\tilde s]$. We say that $$\{ w_s= \tilde{s} : s \in S\}$$ is a set of \emph{lifted relators} in $F(\iota(X) \sqcup \tilde Z)$.
\item \textbf{Conjugation relators.} If $x \in X$, $z \in Z$, and $\epsilon \in \{1, -1\}$, then $\tilde z^\epsilon \iota(x) \tilde z^{-\epsilon}$ is mapped to the identity element under $\pi$. As above, there then exists a word $w_{x,z,\epsilon}=\iota(x_1)^{\epsilon_1} \cdots \iota(x_l)^{\epsilon_l}$ such that $[w_{x,z,\epsilon}]=[\tilde z^\epsilon \iota(x) \tilde z^{-\epsilon}]$ in $B$. We say that $$\{ w_{x,z,\epsilon}= \tilde z^\epsilon \iota(x) \tilde z^{-\epsilon} : x \in X, z \in Z, \epsilon=\pm 1\}$$ is a set of \emph{conjugation relators} in $F(\iota(X) \sqcup \tilde Z)$. \qedhere
\end{itemize}

The following proposition is a well-known result from combinatorial group theory:

\begin{proposition}\label{l:sespres} The group $B$ has a finite presentation given by the generating set $\iota(X) \sqcup \tilde Z$ and sets of kernel, lifted, and conjugation relators. \qed \end{proposition}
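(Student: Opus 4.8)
The plan is to realise $B$ as a quotient of the free product $A \ast F(\tilde Z)$, where $F(\tilde Z)$ is free on the lifts $\tilde z_1,\dots,\tilde z_m$, and then identify exactly which relations are needed. Let $\widehat{B}$ be the group presented by the generating set $\iota(X) \sqcup \tilde Z$ subject to the kernel relations, the lifted relations, and the conjugation relations. Since all three families of relations hold in $B$ (the kernel relations because $\iota$ is a homomorphism from the presented group $A$; the lifted relations and conjugation relations by the hypotheses that $w_s = \tilde s$ and $w_{i,j,\epsilon} = \tilde z_j^\epsilon \iota(x_i)\tilde z_j^{-\epsilon}$ in $B$), there is a surjective homomorphism $\theta \co \widehat{B} \to B$. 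The work is to show $\theta$ is injective.

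First I would establish a normal form: using the conjugation relations, every element of $\widehat{B}$ can be rewritten so that all occurrences of the $\iota(X)$-letters are collected to the left, i.e. written as $\iota(a)\,\tilde w$ where $a$ is a word in $X$ and $\tilde w$ is a word in $\tilde Z$ alone. Concretely, a conjugation relation lets one push a generator $\iota(x_i)^{\pm}$ rightward past any $\tilde z_j^{\pm}$ at the cost of replacing it with the word $w_{i,j,\epsilon}$ in $\iota(A)$; iterating, any word in the generators of $\widehat{B}$ equals one of the form $\iota(a)\tilde w$. This shows the subgroup $\bar A \le \widehat{B}$ generated by $\iota(X)$ is normal, and the map $\widehat{B} \to F(\tilde Z)/\langle\langle \tilde S \rangle\rangle = C$ (killing $\iota(X)$) is well-defined and has kernel exactly $\bar A$, so $\bar A = \ker(\widehat{B}\to C)$.

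Next, compatibility of $\theta$ with the projections to $C$ gives a commutative ladder
\begin{gather*}
1 \to \bar A \to \widehat{B} \to C \to 1,\\
1 \to \iota(A) \to B \to C \to 1,
\end{gather*}
with $\theta$ restricting to a surjection $\bar A \twoheadrightarrow \iota(A) \cong A$. By the five lemma it suffices to prove this restriction is injective, i.e. that $\bar A$ is not larger than $A$. But $\bar A$ is generated by $\iota(X)$ subject to at least the relations $R$ (the kernel relations are precisely the defining relations of $A$), so there is a surjection $A \twoheadrightarrow \bar A$; composing with $\bar A \twoheadrightarrow \iota(A) \cong A$ gives a surjective endomorphism of $A$. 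This endomorphism is the identity on generators, hence is $\mathrm{id}_A$, forcing both surjections to be isomorphisms. Therefore $\theta|_{\bar A}$ is an isomorphism onto $\iota(A)$, and the five lemma gives that $\theta$ is an isomorphism, so the displayed presentation defines $B$.

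The main obstacle is the normal-form/collecting step in the second paragraph: one must check that the lifted relations $\{w_s = \tilde s\}$, after collecting $\iota(X)$-letters to the left, really do land inside the relations $R$ of $A$ — equivalently, that the kernel of $F(\tilde Z) \to C$, when its generators are expressed via the $\tilde S$, is absorbed — so that killing $\iota(X)$ genuinely yields $C$ and nothing smaller, and dually that no extra collapsing of $\bar A$ is introduced by the lifted relations. This is the standard argument for presenting a group extension (cf. the "presentation of an extension" construction in Johnson or Brown), and the verification is routine bookkeeping once the rewriting moves above are set up; I would present it at roughly the level of detail given here, citing a combinatorial group theory reference for the general extension presentation rather than re-deriving it in full.
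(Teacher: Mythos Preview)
Your argument is correct and is the standard proof that a group extension inherits a presentation from presentations of kernel and quotient. The paper offers no proof at all---it states the proposition as ``an exercise in combinatorial group theory'' and moves on---so there is nothing substantive to compare your approach against; you have simply carried out the exercise.

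One cosmetic slip: you say the $\iota(X)$-letters are collected ``to the left'' but then describe pushing them ``rightward past any $\tilde z_j^{\pm}$''; pushing rightward would collect them to the right, yielding $\tilde w\,\iota(a)$ rather than $\iota(a)\,\tilde w$. This is harmless---either direction works, and all you actually need from that step is that $\bar A$ is normal in $\widehat{B}$---but the two sentences should agree.
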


Given presentations of $A$ and $C$, more information is needed in order to algorithmically find lifted and conjugation relators and obtain a presentation of $B$. Roughly speaking, one needs to be able to calculate effectively in $B$, have a description of the map $\iota$, and we need a way to lift the generating set $Z$ of $C$ up to $\tilde Z \subset B$.

\begin{proposition} \label{p:crit}
Given a short exact sequence $$1\to \langle X | R \rangle\xrightarrow{\iota} B\xrightarrow{\pi} \langle Z | S \rangle\to1,$$ suppose that:
\begin{enumerate}
\item The group $B$ is a subgroup of a finitely generated group $G$. The group $G$ has a finite generating set $\langle Y \rangle$ and we have a solution to word problem with this generating set.
\item We have a description of each element of $\iota(X)$ as a word in $F(Y)$, and for each $z \in Z$ there is an algorithm to find a word $w_z$ in $F(Y)$ such that $[w_z] \in B$ and $\pi([w_z])=z$. \end{enumerate}
Then there is an algorithm to find a presentation of $B$. 
\end{proposition}

\begin{proof} 
In this case, we take each lift $\tilde{z}$ to be given by $[w_z]$. For an algorithmic version of Proposition~\ref{l:sespres}, one needs a method to find left hand words of lifted relators and conjugation relators. This is possible as the word problem is solvable in the ambient group $G$. To find each lifted relator we take the element $\tilde{s}$, choose an appropriate ordering of $F(\iota(S))$, and test words  $w_s \in F(\iota(S))$ in order until we find one such that $[w_s]=[\tilde{s}]$. The same method applies to find conjugation relators.
\end{proof}

We have taken care to avoid the assumption that we are given a finite generating set for $B$, as in general this is something that will be obtained from the algorithm. In this paper, $B$ will always be a subgroup of $\aut(F_n)$ or $\out(F_n)$, so condition (1) holds. We will also have a detailed enough description of the maps involved in order to find elements as in (2). One final note: in practice, it is far more convenient to describe automorphisms in terms of how they act on a fixed generating set of $F_n$ rather than as products of elements of a finite generating set of $\aut(F_n)$, so the generating set $\langle Y \rangle$ of the ambient group will not be used explicitly in the work that follows.

\subsection{Other Ingredients}
We will also need a pair of useful procedures:
\begin{itemize}
\item\textbf{The McCool complex.} In \cite{MC},  McCool describes an algorithm to build a 2-dimensional finite CW complex whose fundamental group can be identified with $\out(F_n,\C)$ for any set $\C$ of conjugacy classes (as well as a version for $\aut(F_n,\mathcal{C})$). This clearly leads to an algorithm providing a finite presentation for such a group.
\item\textbf{Whitehead's algorithm.} (\cite{LS}, Proposition I.4.21.) If we have two sets of conjugacy  classes $\alpha_1,\ldots,\alpha_t$ and $\alpha_1',\ldots,\alpha_t'$ in $F_n$ then it is decidable whether there is an automorphism $\phi \in \aut(F_n)$ such that $\phi(\alpha_1)=\alpha_1',\phi(\alpha_2)=\alpha_2',\ldots,\phi(\alpha_t)=\alpha_t'$. One can find such an automorphism if it exists.
\end{itemize}

\subsection{An algorithm to find a presentation of a centraliser}\label{s:alg_steps}

Suppose that we are given a Dehn twist representative for some $\phi\in\aut(F_n)$ or $\out(F_n)$.  To obtain a finite presentation of $C(\phi)$, one proceeds as follows:

\textbf{Step 1.} Convert the Dehn twist to a (pointedly) efficient representative $D$ as in Section \ref{s:peff}. By Theorem~\ref{t:ses} (in $\out(F_n)$) or Theorem~\ref{t:pses} (in $\aut(F_n)$), there is a short exact sequence 
$$1\to DO(\mathcal{G}) \to C^0(\widehat{D}) \to \bigoplus_{w \in V(\G)} \out(G_w,\mathcal{C}_w)\to1$$ or
$$1\to DA(\mathcal{G}) \to C^0(D_{*v}) \to \aut(G_v,\C_v)\oplus\bigoplus_{w\neq v}\out(G_w,\C_w) \to1.$$

\textbf{Step 2.} Find a presentation of the right hand term in this first short exact sequence by building the McCool complex for each summand.

\textbf{Step 3.} Use the respective short exact sequence to determine a presentation of $C^0(\widehat{D})$ or $C^0(D_{*v})$ using the method in Section~\ref{s:ses}. Part (2) of Proposition~\ref{p:crit} is satisfied as we have an explicit description of elements in the left-hand group as Dehn twists, and for each element $(H_w)_{w \in V(\G)}$ in the right-hand term of the exact sequence one can build a graph of groups automorphism $H$ mapping to $(H_w)_{w \in V(\G)}$ as in the proof of Lemma~\ref{l:dt}, then compute the automorphism represented by $H$ to lift generators of the right-hand group to the centraliser.

\textbf{Step 4.} Now turn to the second short exact sequence: $$1\to C^0(\widehat{D})\to C(\widehat{D})\overset{\bar\alpha}{\to}\text{im }\bar\alpha \to 1$$
or$$1\to C^0(D_{*v})\to C(D_{*v})\overset{\bar\beta} \to \text{im }\bar\beta\to 1.$$ We need to find a presentation for the image of $\bar\alpha$ or $\bar\beta$ in $\aut(\G)$: one must find which graph automorphisms can be induced by an arbitrary element $H$ of $\aut(\mathcal{G},\widehat{D})$ or $\aut(\mathcal{G},D_{*v})$. For each edge $e$ with twistor $z_e$ fix a generator $a_e\in G_e\cong\mathbb{Z}$ and $n_e>0$ such that $z_e=a_e^{n(e)}$. Since $z_{\bar e}=z_e^{-1}$, we get $a_{\bar e}=a_e^{-1}$ and $n(\bar{e})=n(e)$ for every edge $e$. In the $\out$ case, a graph isomorphism $h\in\aut(\G)$ is in the image of $\bar\alpha$ if and only if $n_{h(e)}=n_e$ for all $e$, and for each vertex $w$ there is an isomorphism from $G_w$ to $G_{h(w)}$ mapping each conjugacy class $[f_e(a_e)]$ in $G_w$ to the conjugacy class $[f_{h(e)}(a_{h(e)})]$ in $G_{h(w)}$. To check this, one first checks whether the ranks of the free groups agree and, if so, one uses Whitehead's algorithm to test for the existence of an automorphism preserving the prescribed conjugacy classes. In the $\aut$ case, the image of $\bar\beta$ is determined in the same way, 
but 
only for graph automorphisms fixing the basepoint $v$.

We can now choose a finite presentation for this subgroup of $\aut(\G)$, e.g. take all group elements as generators and the obvious relators given by group multiplication.

\textbf{Step 5.}  Compute a presentation of the centraliser $C(\widehat{D})$ (or $C(D_{*v})$) using the second exact sequence and the presentation of $C^0(\widehat{D})$ (or $C^0(D_{*v})$) and the presentation of $\text{im } \bar{\alpha}$ (or $\text{im } \bar{\beta}$, respectively). 

The conditions for Proposition~\ref{p:crit} are satisfied as we found automorphisms of graphs of groups representing elements of $\text{im } \bar{\alpha}$ and $\text{im } \bar{\beta}$ in Step 4 and can use these to find pre-images of elements of the right-hand group in the centraliser.

\begin{remark}
Although this is an algorithm to compute an explicit finite presentation, the McCool complex usually has a huge number of cells. Hence it is hard to write down the resulting presentation of the centraliser by hand. Therefore it is desirable to simplify these presentations. For the stabiliser of conjugacy classes of basis elements this has been done in \cite{JW}, and the first author describes simplified presentations for stabilisers of more general elements in \cite{R}.
\end{remark}

\section{Centraliser of a Nielsen automorphism} \label{s:Nielsen}

In this section we apply the work from the rest of the paper to Nielsen automorphisms, which have particularly simple Dehn twist representatives. We give a presentation for the centraliser of a Nielsen automorphism, use this presentation to compute the abelianisation of the centraliser, and finally describe how this computation restricts actions of $\aut(F_n)$ on CAT(0) spaces. 

\subsection{Nielsen automorphisms of $F_n$ as Dehn twists}\label{s:nielsen_dt}

Let $\G$ be the graph with one vertex $v$ and one loop $e$ (that is two oriented edges $e$ and $\bar e$). We take $G_v$ to be a free group with basis $B,b,c_1,\ldots,c_{n-2}$. Let $G_e$ be infinite cyclic with generator $r$. The edge maps are defined by $f_e(r)=b$ and $f_{\bar e}(r)=B$.

The fundamental group $\pi_1(\mathcal{G},v)$ is the full path group $\Pi(\mathcal{G})$ here. It is generated by $a:=t_e$, $B$, $b$, and the $c_i$ subject to the relation $aba^{-1}=B$. In other words, it is the free group with basis $a,b,c_1,\ldots,c_{n-2}$. We define a Dehn twist $D$ by $\gamma_e=r^{-1}$ and $\gamma_{\bar e}=1$. Then $\delta(e)=f_e(\gamma_e)=b^{-1}$ and $\delta(\bar e)=f_{\bar e}(\gamma_{\bar e})=1$. It follows that $D_{*v}$ maps $a$ to $ab$ and fixes $b$ and all $c_i$. Hence $\rho:=D_{*v}$ is a Nielsen automorphism. Note that $D$ is efficient. (See Sections \ref{s:action}, \ref{s:dtdef}, and \ref{s:cdt} for definitions and terminology.)

We may now compute an explicit presentation of the centraliser of $\rho$ by the algorithm outlined in Section \ref{s:alg_steps}. As $DA(\mathcal{G})$ is infinite cyclic and generated by $\rho$, the short exact sequence for $C^0(\rho)$ simplifies to
\begin{equation}\label{sesn}
1\to\langle\rho\rangle\to C^0(\rho)\to\aut(G_v,\C_v)\to1,
\end{equation}
where $\C_v=\{[b],[B]\}$ is a set of two conjugacy classes of basis elements. Hence the first item we need is a presentation of $\aut(G_v,\C_v)$. This was found in \cite{JW}, and we will review it in Proposition \ref{p:JW} below.

\subsection{A presentation for $\aut(G_v,\C_v)$}

In the following, we use $P_{i,j}$ to denote the automorphism of either $F_n=\langle a,b,c_1,\ldots,c_{n-2}\rangle$ or $G_v=\langle B,b,c_1,\ldots,c_{n-2}\rangle$ which permutes the basis elements $c_i$ and $c_j$. Similarly $I_i$ denotes the automorphism mapping $c_i$ to $c_i^{-1}$ and fixing the other basis elements.

If $y$ and $z$ are elements of a fixed basis of a free group and $\epsilon\in\{\pm1\}$, then $(y^\epsilon;z)$ is the automorphism fixing all basis elements different from $y$ and sending $y$ to $yz$ if $\epsilon=1$, and to $z^{-1}y$ if $\epsilon=-1$. Moreover $(y^\pm;z)$ is the partial conjugation $y\mapsto z^{-1}yz$ fixing the other basis elements.

We warn the reader that $(y^\pm;z)$ is {\em not} an abbreviation for $(y;z)$ or $(y^{-1};z)$, instead it denotes the composition of those two. The partial conjugation $(y^\pm;z)$ is usually used in a generating set when $(y;z)$ and $(y^{-1};z)$ are not elements of the given subgroup but the partial conjugation is.

The following is the special case $k=2$ of Proposition 7.1 in \cite{JW}. Let $y_1:=B=aba^{-1}$ and $y_2:=b$.

\begin{proposition}[\cite{JW}, Proposition~7.1] \label{p:JW}
The group $\aut(G_v,\C_v)$ is generated by the following elements:\\

\begin{tabbing}
$P_{i,j}$\qquad\=for $1\le i,j\le n-2$ and $i\neq j$,\\
$I_i$\>for $1\le i\le n-2$,\\
$(c_i^\epsilon;z)$\>for $1\le i\le n-2$, $\epsilon=\pm1$ and $c_i\neq z\in\{c_1,\ldots,c_{n-2},y_1,y_2\}$,\\
$(y_i^{\pm};z)$\>for $i\in\{1,2\}$ and $y_i\neq z\in\{c_1,\ldots,c_{n-2},y_1,y_2\}$.
\end{tabbing}
For all possible choices $z,z_i\in\{c_1,\ldots,c_{n-2},y_1,y_2\}$ and $w,w_i=c_{j_i}^{\delta_i}$ or $y_{j_i}^\pm$, whenever the symbols involved give well-defined generators or inverses from the list above, the following list is a collection of defining relators:
\begin{tabbing} 
Q1\qquad\=Relators in $\aut(F_{n-2})$ for $\{(c_i^\epsilon;c_j),P_{i,j},I_j\}$,\\
Q2\>$(w_1;z_1)(w_2;z_2)=(w_2;z_2)(w_1;z_1)$ for $w_1\neq w_2$ and $z_i^{\pm1}\notin\{w_1,w_2\}$,\\
Q3.1\>$(y_i^\pm;c_j)P_{j,l}=P_{j,l}(y_i^\pm;c_l)$,\\
Q3.2\>$(y_i^\pm;c_j)I_j=I_j(y_i^\pm;c_j^{-1})$,\\
Q3.3\>$P_{j,l}$, $I_j$ commute with $(y_1^\pm;y_2)$, $(y_2^\pm;y_1)$,\\
Q3.4\>$(c_j^\epsilon;y_i)P_{j,l}=P_{j,l}(c_l^\epsilon;y_i)$,\\
Q3.5\>$(c_j;y_i)I_j=I_j(c_j^{-1};y_i)$,\\
Q4.1\>$(w;c_j^{-\eta})(c_j^\eta;z)(w;c_j^\eta)=(w;z)(c_j^\eta;z)$,\\
Q4.2\>$(y_i^\pm;z^{-\epsilon})(w;y_i)(y_i^\pm;z^\epsilon)=(w;z^\epsilon)(w;y_i)(w;z^{-\epsilon})$,\\
Q5\>$(c_j^{-\eta};y_i)(y_i^\pm;c_j^\eta)=(y_i^\pm;c_j^\eta)(c_j^\eta;y_i^{-1})$.
\end{tabbing}
\end{proposition}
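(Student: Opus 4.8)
This proposition is the case $k=2$ of Proposition~7.1 of \cite{JW}, so the plan is to reconstruct that argument in the situation at hand. What we must present is $\aut(G_v,\C_v)=\aut(F_n,\{[B],[b]\})$, the stabiliser in $\aut(F_n)$ of the ordered pair of conjugacy classes of two elements $B=y_1$, $b=y_2$ of a free basis, the remaining basis elements being $c_1,\dots,c_{n-2}$. The two ingredients will be McCool's generation result and the simple connectivity of the McCool complex \cite{MC}, followed by Tietze transformations to reach the stated form.

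\textbf{Generators.} First I would invoke McCool's peak-reduction theorem: the stabiliser of a tuple of conjugacy classes in $\aut(F_n)$ is generated by the Whitehead automorphisms it contains. It then remains to classify which Whitehead automorphisms fix the ordered pair $([B],[b])$. These are: permutations $P_{i,j}$ and inversions $I_i$ of the $c$'s; the transvections $(c_i^{\epsilon};z)$ for \emph{any} target $z$, since altering a $c_i$ does not affect $[B]$ or $[b]$; and the partial conjugations $(y_i^{\pm};z)$, which send $y_i$ to a conjugate and hence preserve its conjugacy class. A transvection with target $y_1$ or $y_2$ changes the corresponding conjugacy class and is therefore excluded — this is exactly why no generators $(y_i^{\epsilon};z)$ with $\epsilon=\pm1$ appear in the list. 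One checks that the displayed generators are precisely these basic Whitehead automorphisms, together with enough $c$-transvections to recover all of $\aut(F_{n-2})$, so they generate.

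\textbf{Relations.} Second, for the relations I would use the fact that the McCool complex is a simply connected $2$-complex whose $2$-cells record the relations of bounded length among Whitehead automorphisms. Reading these off and organising them by type produces exactly: the relations Q1 internal to the $\aut(F_{n-2})$ block; the commutator relations Q2 between automorphisms with disjoint support; the mixed commutators Q3.1--Q3.5 expressing compatibility of the partial conjugations $(y_i^{\pm};c_j)$ and the $c$-transvections $(c_j^{\epsilon};y_i)$ with the permutations and inversions $P_{j,l}$, $I_j$; and the Steinberg/Magnus-type relations Q4.1, Q4.2, Q5 among transvections and partial conjugations that share a letter. A sequence of Tietze transformations eliminating the redundant Whitehead automorphisms in favour of the chosen generating set then puts the presentation into the stated form; specialising the general-$k$ bookkeeping of \cite{JW} to $k=2$ gives precisely the families Q1--Q5 above.

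\textbf{Main obstacle.} The hard part will be the last step: verifying that the relation families Q1--Q5 are \emph{complete}, i.e.\ that attaching the corresponding $2$-cells already kills $\pi_1$ of the McCool complex, and that the Tietze reduction introduces no further relators. This is where the geometry of peak reduction does the real work and where \cite{JW} carries out the detailed combinatorial verification; for the present paper it suffices to quote that result at $k=2$.
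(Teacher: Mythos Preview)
Your plan is essentially right, and in fact you correctly anticipate what the paper does: it does not prove this proposition at all but simply states it as the special case $k=2$ of Proposition~7.1 in \cite{JW}. Your sketch of the McCool--Jensen--Wahl argument (peak reduction for generators, simple connectivity of the McCool complex for relations, then Tietze reduction) is a fair outline of how that result is obtained.

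There is, however, one genuine point you miss which the paper flags explicitly: the presentation in \cite{JW} is not quite correct as published --- the relations Q3.3 through Q3.5 are \emph{missing} there. The paper remarks that this gap was noticed independently by Andrew Putman and by the first author. So ``quote \cite{JW} at $k=2$'' is not literally sufficient; one must also observe that these extra commutation relations (between $P_{j,l}$, $I_j$ and the mixed generators $(y_1^\pm;y_2)$, $(y_2^\pm;y_1)$, $(c_j^\epsilon;y_i)$) are needed and add them by hand. Your outline of the McCool argument would in principle produce them, but since you explicitly propose to cite \cite{JW} for the completeness step, you should be aware that the cited source requires this correction.
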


Here we read compositions from right to left. We warn the reader that the articles \cite{JW} and \cite{MC} use the opposite convention.

In the original article \cite{JW} the relators Q3.3 through Q3.5 are missing. It was independently noticed by Andrew Putman and by the first author that these relators have to be added.

\subsection{The presentation for $C(\rho)$}

By definition, $C^0(\rho)$ is the kernel of the map $\bar\alpha$ in Proposition \ref{p:pgraph}. Since $\G$ is a loop with a single vertex, $\aut(\G)=\aut(\G,v)$ is cyclic of order 2 generated by the isometry swapping $e$ with $\bar{e}$. The map $\bar{\beta}$ is surjective: Indeed, for $\theta\in C(\rho)$ defined by
$$\theta(x)=\begin{cases}a^{-1} ,&\text{if }x=a,\\ab^{-1}a^{-1} ,&\text{if }x=b,\\c_i ,&\text{if }x=c_i,\end{cases}$$
the graph automorphism $\bar{\beta}(\theta)$ is non-trivial, as $a$ is identified with $t_e$. Hence $C^0(\rho)$ is a subgroup of index 2 in $C(\rho)$. 

For $z\in F_n$ let $\gamma_z$ be the endomorphism of $F_n$ defined by:
$$\gamma_z(x)=\begin{cases}
              az ,&\text{if } x=a,\\
	      z^{-1}bz ,&\text{if } x=b,\\
	      c_i ,&\text{if } x=c_i.
              \end{cases}$$
For notational convenience we sometimes write $(\gamma_\bullet;z)$ instead of $\gamma_z$. We will always choose $z$ so that $\gamma_z$ is an automorphism.
\begin{theorem}\label{pres:c}
The centraliser in $\aut(F_n)$ of the Nielsen automorphism $\rho$ is generated by the following elements:
\begin{tabbing}
$P_{i,j}$\hspace{1cm}\=for $1\le i,j\le n-2$ and $i\neq j$,\\
$I_i$\>for $1\le i\le n$,\\
$(c_i^\epsilon;z)$\>for $1\le i\le n-2$, $\epsilon=\pm1$, and $c_i\neq z\in\{aba^{-1},b,c_1,\ldots,c_{n-2}\}$,\\
$\gamma_z$\>for $z\in\{aba^{-1},c_1,\ldots,c_{n-2}\}$,\\
$(a^{-1};z)$\>for $z\in\{b,c_1,\ldots,c_{n-2}\}$,\\
$\rho$,\\
$\theta$.
\end{tabbing}
For all possible choices $z,z_i\in\{aba^{-1},b,c_1,\ldots,c_{n-2}\}$ and $u,u_i\in\{c_1^{\pm1},\ldots,c_{n-2}^{\pm1},a^{-1},\gamma_\bullet\}$, whenever the symbols involved give well-defined generators or inverses from the list above, the following list is a collection of defining relators:
\begin{tabbing}
R1\qquad\=Relators in $\aut(F_{n-2})$ for $\{(c_i^\epsilon;c_j),P_{i,j},I_j\}$,\\
R2.1\>$(c_i^\epsilon;z_1)(c_j^\eta;z_2)=(c_j^\eta;z_2)(c_i^\epsilon;z_1)$ for $c_i^\epsilon\neq c_j^\eta$ and $z_1^{\pm1},z_2^{\pm1}\notin\{c_i,c_j\},$\\
R2.2\>$(c_i^\epsilon;z_1)(a^{-1};z_2)=(a^{-1};z_2)(c_i^\epsilon;z_1)$ for $z_j^{\pm1}\notin\{aba^{-1},c_i\}$,\\
R2.3\>$(c_i^\epsilon;z_1)\gamma_{z_2}=\gamma_{z_2}(c_i^\epsilon;z_1)$ for $z_j^{\pm1}\notin\{b,c_i\}$,\\
R2.4\>$(a^{-1};c_i)\gamma_{c_j}=\gamma_{c_j}(a^{-1};c_i)$,\\
R3.1\>$(a^{-1};c_j)P_{j,l}=P_{j,l}(a^{-1};c_l)$,\\
R3.2\>$(a^{-1};c_j)I_j=I_j(a^{-1};c_j^{-1})$,\\
R3.3\>$\gamma_{c_j}\circ P_{j,l}=P_{j,l}\circ\gamma_{c_l}$,\\
R3.4\>$\gamma_{c_j}\circ I_j=I_j\circ\gamma_{c_j}^{-1}$,\\
R3.5\>$P_{j,l}$, $I_j$ commute with $(a^{-1};b)$ and $\gamma_{aba^{-1}}$,\\
R3.6\>$(c_j^\epsilon;z)P_{j,l}=P_{j,l}(c_l^\epsilon;z)$ for $z=aba^{-1}$ or $z=b$,\\
R3.7\>$(c_j;z)I_j=I_j(c_j^{-1};z)$ for $z=aba^{-1}$ or $z=b$,\\
R4.1\>$(u;c_j^{-\eta})(c_j^\eta;z)(u;c_j^\eta)=(u;z)(c_j^\eta;z)$,\\
R4.2\>$(a^{-1};z^{-\epsilon})(u;aba^{-1})(a^{-1};z^\epsilon)=(u;z^\epsilon)(u;aba^{-1})(u;z^{-\epsilon})$,\\
R4.3\>$\gamma_z^{-\epsilon}(u;b)\gamma_z^\epsilon=(u;z^\epsilon)(u;b)(u;z^{-\epsilon})$,\\
R5.1\>$(c_j^{-\eta};aba^{-1})(a^{-1};c_j^\eta)=(a^{-1};c_j^\eta)(c_j^\eta;ab^{-1}a^{-1})\rho$,\\
R5.2\>$(c_j^{-\eta};b)\gamma_{c_j}^\eta\circ\rho=\gamma_{c_j}^\eta(c_j^\eta;b^{-1})$,\\
R6\>$\rho$ commutes with all generators,\\
R7\>$\theta^2=1$,\\
R8.1\>$\theta\circ P_{i,j}=P_{i,j}\circ\theta$,\\
R8.2\>$\theta\circ I_i=I_i\circ\theta$,\\
R8.3\>$\theta\circ(c_i^\epsilon;c_j)=(c_i^\epsilon;c_j)\circ\theta$,\\
R8.4\>$\theta\circ(c_i^\epsilon;aba^{-1})=(c_i^\epsilon;b^{-1})\circ\theta$,\\
R8.5\>$\theta\circ\gamma_{c_i}=(a^{-1};c_i)\circ\theta$,\\
R8.6\>$\theta\circ\gamma_{aba^{-1}}=(a^{-1};b^{-1})\circ\theta$.
\end{tabbing}
\end{theorem}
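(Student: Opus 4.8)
The plan is to apply the algorithm of Section~\ref{s:alg} directly to the efficient Dehn twist $D$ of Section~\ref{s:Nielsen}, keeping explicit track of each short exact sequence. Since $DA(\mathcal G)=\langle\rho\rangle$ and $\mathcal G$ is a loop on one vertex, the relevant data are the presentation~\eqref{sesn} of $C^0(\rho)$ as an extension of $\aut(G_v,\C_v)$ by $\langle\rho\rangle$, and the extension $1\to C^0(\rho)\to C(\rho)\xrightarrow{\bar\beta}\mathbb Z/2\to 1$ coming from the graph automorphism swapping $e$ with $\bar e$. First I would take the Jensen--Wahl presentation of $\aut(G_v,\C_v)$ recorded in Proposition~\ref{p:JW}, with $y_1=B=aba^{-1}$ and $y_2=b$, and translate its generators and relations into the ambient group $\aut(F_n)$: the generators $(y_i^\pm;z)$ of $\aut(G_v,\C_v)$ are \emph{not} automorphisms of $F_n$ on the nose, so I would choose explicit lifts. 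The natural choice is to lift $(y_2^\pm;z)=(b^\pm;z)$ to $\gamma_z$ (which conjugates $b$ by $z$ and multiplies $a$ by $z$ on the right, so that $aba^{-1}\mapsto a z^{-1}bz a^{-1}$, matching the action on $G_v$), and to lift $(y_1^\pm;z)$ using the relation in $\aut(G_v,\C_v)$ that expresses $(y_1^\pm;z)$ via $(y_2^\pm;z)$, $\rho$, and elements of $\aut(F_{n-2})$; this is where $\rho$ first enters the relation set and is the source of the terms R5.1 and R5.2. The generators $(a^{-1};z)$ for $z\neq b$ then arise because once $\gamma_z=(b^\pm;z)$-lift is available, we still need a lift of $(y_1^\pm;z)$ for $z$ a $c_j$, and $(a^{-1};z)$ together with $\gamma_{aba^{-1}}$ generate these; the precise bookkeeping of which lifts generate $C^0(\rho)$ is Step~1.

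Second, I would run Proposition~\ref{l:sespres} on the sequence~\eqref{sesn}: the kernel relation is $[\rho,\rho]=1$, trivial; the lifted relations are the images of relations Q1--Q5 under our chosen lift, and the conjugation relations record $[\tilde z,\rho]$ for each lifted generator $\tilde z$. Because $\rho=D_{*v}$ is itself a Dehn twist lying in the centre of $C(\rho)$ — indeed it lies in $DA(\mathcal G)$ which is central in $C^0(\rho)$, and one checks directly that $\rho$ commutes with $\theta$ too — every conjugation relation collapses to ``$\rho$ commutes with the generator'', giving R6 and making $\rho$ central. The lifted relations Q1--Q5 become R1--R5: Q1 is R1 verbatim (inside $\aut(F_{n-2})$, untouched by the lifting); the commutation relations Q2, Q3.* become R2, R3.* after substituting $\gamma_\bullet$ for $(y_2^\pm;\cdot)$, $(a^{-1};\cdot)$ and $\gamma_{aba^{-1}}$ for $(y_1^\pm;\cdot)$; the ``complicated'' relations Q4.1, Q4.2, Q5 become R4.1, R4.2, R4.3, R5.1, R5.2, with the extra factors of $\rho$ appearing precisely because the lift of $(y_1^\pm;z)$ is only equal to a product involving $\rho$. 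I would verify R5.1 and R5.2 by a direct computation in $\aut(F_n)$ evaluating both sides on the basis $a,b,c_1,\dots,c_{n-2}$; this is the one place real calculation is unavoidable. This yields a presentation of $C^0(\rho)$.

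Third, for $C(\rho)$ I apply Proposition~\ref{l:sespres} once more to $1\to C^0(\rho)\to C(\rho)\xrightarrow{\bar\beta}\mathbb Z/2\to 1$, lifting the generator of $\mathbb Z/2$ to the explicit automorphism $\theta$ displayed before the theorem (one checks $\bar\beta(\theta)\neq 1$ since $a=t_e$ is sent to $a^{-1}=t_{\bar e}$, so $\theta\notin C^0(\rho)$). The single relator of $\mathbb Z/2$ lifts to $\theta^2$, and since $\theta^2$ fixes $a,b$ and each $c_i$ we get $\theta^2=1$ (R7); the conjugation relations are $\theta(\text{generator})\theta^{-1}=(\text{word in }C^0(\rho))$, and evaluating $\theta g\theta$ on the basis for each generator $g$ produces R8.1--R8.6 (for instance $\theta\gamma_{c_i}\theta$ sends $a\mapsto a^{-1}c_i$ wait — one recomputes and finds it equals $(a^{-1};c_i)$, and $\theta\gamma_{aba^{-1}}\theta=(a^{-1};b^{-1})$, which is why the generating set of $C(\rho)$ does not need a separate $\gamma_b$ or $(a^{-1};aba^{-1})$). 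Finally I would merge generators: Tietze transformations identify redundant lifts — e.g. $I_i$ for $n-1\le i\le n$ in the stated list are only formal placeholders and collapse, and any generator of the form $(a^{-1};aba^{-1})$ or $\gamma_b$ is eliminated using R8.5, R8.6 and R4.3 — to arrive at exactly the generating set and relation list R1--R8.6 in the theorem.

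\textbf{Main obstacle.} The principal difficulty is not conceptual but the careful choice of lifts: the Jensen--Wahl generator $(y_1^\pm;z)=(B^\pm;z)$ has no canonical preimage in $\aut(F_n)$, and different choices shuffle factors of $\rho$ (and of $\theta$) around the relations in ways that change their superficial form. I expect the bulk of the work to be (i) pinning down one consistent system of lifts, (ii) the explicit basis-level verification of R5.1, R5.2, R4.2, R4.3 and the conjugation relations R8.4--R8.6, and (iii) the Tietze bookkeeping that removes the redundant generators and confirms that no relation has been lost. The recently-corrected status of Q3.3--Q3.5 in \cite{JW} means I would also double-check that their lifts R3.5--R3.7 are genuinely present and independent.
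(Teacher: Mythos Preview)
Your overall architecture matches the paper exactly: apply Proposition~\ref{l:sespres} first to the sequence~\eqref{sesn} to present $C^0(\rho)$, then to $1\to C^0(\rho)\to C(\rho)\to\mathbb{Z}/2\to 1$ to bring in $\theta$. The kernel, lifted, and conjugation relations are identified correctly, and your treatment of the second extension (R7, R8) is fine.

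Where you go astray is the lift of $(y_1^\pm;z)$. You propose to lift it \emph{indirectly}, via some relation ``expressing $(y_1^\pm;z)$ through $(y_2^\pm;z)$, $\rho$, and $\aut(F_{n-2})$'', and you claim this is where $\rho$ first enters. This is both unnecessary and a misdiagnosis. The surjection $C^0(\rho)\to\aut(G_v,\C_v)$ is simply restriction to $G_v$, so lifting a generator means finding an automorphism of $F_n$ that restricts to it. The automorphism $(a^{-1};z)$ sends $a\mapsto z^{-1}a$ and fixes $b,c_i$; hence $B=aba^{-1}\mapsto z^{-1}Bz$, which is exactly $(y_1^\pm;z)$ on $G_v$. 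So $(a^{-1};z)$ \emph{is} the lift, directly --- no detour through $\rho$ or other relations is needed. Likewise $(c_i^\epsilon;y_1)$ lifts to $(c_i^\epsilon;aba^{-1})$ and $(c_i^\epsilon;y_2)$ to $(c_i^\epsilon;b)$ on the nose.

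With these direct lifts in hand, $\rho$ appears not through the choice of lift but through the \emph{lifted relations} mechanism of Proposition~\ref{l:sespres}: one lifts each Q--relation word-for-word, and for Q1--Q4 the two sides already agree in $\aut(F_n)$, while for Q5 the two sides differ by a power of $\rho$ (this is the ``direct calculation'' you correctly flag as unavoidable), producing R5.1 and R5.2. The conjugation relations are then just R6.

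Consequently your final ``Tietze bookkeeping'' step is almost entirely superfluous: the direct lifts already give precisely the generating set in the statement (note $\gamma_b$ and $(a^{-1};aba^{-1})$ never arise, since $(y_i^\pm;y_i)$ is not a Jensen--Wahl generator), and the range $1\le i\le n$ for $I_i$ in the statement is a typo for $1\le i\le n-2$, not something to be collapsed.
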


\begin{proof}
Recall the short exact sequence \eqref{sesn} on page \pageref{sesn}: $$1\to\langle\rho\rangle\to C^0(\rho)\to\aut(G_v,\C_v)\to1.$$ To get a presentation for $C^0(\rho)$, we now use Proposition \ref{l:sespres}. We first have to lift the generators of $\aut(G_v,\mathcal{C}_v)$ in Proposition \ref{p:JW} to $C^0(\rho)$. Since the surjection in the short exact sequence is given by restriction to the vertex group $G_v$, lifting means extending automorphisms from $G_v$ to all of $F_n$. The generators $P_{i,j}$, $I_i$, and $(c_i^\epsilon;c_j)$ are lifted to the elements of $C^0(\rho)$ called by the same name.
The elements $(c_i^\epsilon;y_1)$ and $(c_i^\epsilon;y_2)$ are lifted to $(c_i^\epsilon;aba^{-1})$ and $(c_i^\epsilon;b)$ respectively. The generator $(y_1^\pm;z)$ can be extended to $(a^{-1};z)$ and $(y_2^\pm;z)$ to $\gamma_z$. 

We now have a generating set for $C^0(\rho)$ consisting of these lifted generators together with $\rho$. The lifted relators are R1 through R5, which have (roughly) the same numbers as the corresponding relators in Proposition \ref{p:JW}. A direct inspection shows that $\rho$ only appears in R5. Since $\rho$ is central, the conjugation relators are simply the commutation rules R6. As the left hand term in this exact sequence is simply the infinite cyclic group generated by $\rho$, there are no kernel relators.

To get a presentation for $C(\rho)$, we apply Proposition \ref{l:sespres} to the short exact sequence $$1\to C^0(\rho)\to C(\rho)\overset{\bar\beta}{\to}\mathbb{Z}/2\mathbb{Z}\to1,$$ where $\mathbb{Z}/2\mathbb{Z}$ is identified with $\aut(\G,v)$, so that $\bar\beta(\theta)=-1$. A generating set for $C(\rho)$ is then given by $\theta$ and our chosen generators of $C^0(\rho)$. The relators are again R1 through R6 along with the lifted relator $\theta^2=1$ coming from $\mathbb{Z}/2\mathbb{Z}$, which we label R7, and the conjugation relators given by R8.
\end{proof}

\subsection{The abelianisation}

For an element $g$ in an arbitrary group $G$ let $\llbracket g\rrbracket$ denote its class in the abelianisation $H_1(G)=G/[G,G]$. We now study the abelianisation of $C(\rho)$.

\begin{corollary}\label{pres:cab} Let $\rho\in\aut(F_n)$ be a Nielsen automorphism. Then:
$$H_1(C(\rho))\cong \begin{cases}
\mathbb{Z}^2\oplus\mathbb{Z}/2\mathbb{Z} ,&\text{if } n=2,\\
\mathbb{Z}\oplus(\mathbb{Z}/2\mathbb{Z})^3 ,&\text{if } n=3,\\
(\mathbb{Z}/2\mathbb{Z})^3 ,&\text{if } n=4,\\
(\mathbb{Z}/2\mathbb{Z})^2 ,&\text{if } n\ge5.
\end{cases}$$
When $n=2$, the class $\llbracket\rho\rrbracket$ is a generator of $\mathbb{Z}^2$, when $n=3$ it is twice a generator of $\mathbb{Z}$, and otherwise $\llbracket\rho\rrbracket=0$.
\end{corollary}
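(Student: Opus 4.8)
The plan is to abelianise the presentation of $C(\rho)$ obtained in Theorem~\ref{pres:c}. Abelianising a finitely presented group is mechanical: one takes the free abelian group on the generating set and quotients by the subgroup generated by the relations, each read additively. So the first step is to write down the generator set additively. The generators split into four families: the ``rank $n-2$ part'' $\{(c_i^\epsilon;c_j),P_{i,j},I_j\}$, the mixed generators $\{(c_i^\epsilon;z) : z \in \{aba^{-1},b\}\}$ together with $\{(a^{-1};z),\gamma_z\}$, the central element $\rho$, and the order-two element $\theta$. I would first observe that, abelianised, the subgroup generated by $\{(c_i^\epsilon;c_j),P_{i,j},I_j\}$ is just $H_1(\aut(F_{n-2}))$, which is known: it is $\mathbb{Z}/2\mathbb{Z}$ for $n-2 \geq 3$ (i.e. $n\geq 5$), and has a slightly larger abelianisation for small rank ($\aut(F_1)=\mathbb{Z}/2\mathbb{Z}$ has $H_1=\mathbb{Z}/2\mathbb{Z}$, $\aut(F_0)$ trivial, $H_1(\aut(F_2))\cong (\mathbb{Z}/2\mathbb{Z})^2$). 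This already explains why the answer stabilises for $n\geq 5$ and jumps around for $n=2,3,4$.

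Next I would run through the relations R1--R8 and see what each contributes additively. R1 gives the relations of $H_1(\aut(F_{n-2}))$. R2 (commutators) and R6 ($\rho$ central) vanish in the abelianisation. The conjugation relations R3.1--R3.7, R8.1--R8.6 each say ``(generator) $=$ (generator)'' up to sending $c_j \to c_j^{-1}$ or $z \to z^{-1}$ or introducing $\rho$; read additively each either identifies two generator classes (possibly forcing some $2x=0$, e.g. from R3.4 $\gamma_{c_j} = -\gamma_{c_j}$ type relations, or R3.2, R3.7 forcing involution-type relations), or in the cases R5.1, R5.2 it expresses $\llbracket\rho\rrbracket$ in terms of the other generators. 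The quadratic relations R4.1--R4.3 and R5.1, R5.2 are the substantive ones: written additively, R4.1 becomes something like $(u;z) + (c_j^\eta;z) = 0$ after cancellation, R4.2 and R4.3 similarly collapse, and R5.1, R5.2 pin down $\llbracket\rho\rrbracket$. The key computational step is to carefully track, for a fixed index range, which classes survive: I expect the $\gamma_z$ and $(a^{-1};z)$ and $(c_i^\epsilon;z)$ classes to mostly die or be identified with $2$-torsion classes, leaving essentially two independent $\mathbb{Z}/2\mathbb{Z}$ generators plus the contribution of $\llbracket\rho\rrbracket$ and $\llbracket\theta\rrbracket$ (the latter of order $2$ by R7).

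The delicate point — and the main obstacle — is the small-rank bookkeeping. When $n$ is large there are ``generic'' relations available that kill most classes, but when $n=2$ or $n=3$ several generator families are empty (there are no $c_i$ at all when $n=2$, only one when $n=3$), so many relations simply do not exist, and classes that would otherwise be killed survive. In particular when $n=2$ the only generators are $\gamma_\bullet$-type, $(a^{-1};b)$, $\rho$, $\theta$; one has to check by hand which relations remain and verify the abelianisation is $\mathbb{Z}^2 \oplus \mathbb{Z}/2\mathbb{Z}$ with $\llbracket\rho\rrbracket$ primitive in the $\mathbb{Z}^2$ — this matches the classical fact that $C(\rho)$ for $n=2$ is closely related to a Heisenberg-type or $F_2$-by-$\mathbb{Z}$ group. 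For $n=3$ one must see why $\llbracket\rho\rrbracket$ becomes twice a generator: presumably relations R5.1 or R5.2 force $\llbracket\rho\rrbracket = -2\llbracket(a^{-1};c_1)\rrbracket$ or similar, while that generator class has infinite order. For $n=4$ one checks the $\mathbb{Z}$ summand present at $n=3$ acquires $2$-torsion. The cleanest way to organise this is to present, for each of the four cases, the reduced generating set after applying the identification relations, then impose the quadratic relations, and read off the group; I would do the generic case $n\geq 5$ first in full detail and then indicate the modifications for $n=4,3,2$. To pin down the class of $\rho$ one uses the specific relations R5.1 (for the $\aut$-internal part) or R5.2, plus the fact, stated after Corollary~\ref{c:fpout} and implicit in the setup, that $\rho$ itself maps to a nontrivial element; a sanity check is that $\llbracket\rho\rrbracket \ne 0$ exactly when $n \leq 3$, consistent with Corollary~\ref{c:transl} where for $n\geq 4$ the Nielsen automorphism must act with zero translation length (its powers would otherwise be distorted).
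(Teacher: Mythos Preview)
Your approach is essentially the paper's: abelianise the presentation of Theorem~\ref{pres:c}, treating the small-rank cases $n=2,3$ by hand and the generic case by collapsing the transvection-type generators. Two small points: your additive reading of R4.1 is off --- since $(u;c_j^{-\eta})$ and $(u;c_j^\eta)$ are inverse, the relation abelianises directly to $\llbracket(u;z)\rrbracket=0$ (whenever a spare $c_j$ exists), not to $\llbracket(u;z)\rrbracket+\llbracket(c_j^\eta;z)\rrbracket=0$; and for $n\ge4$ the paper streamlines the bookkeeping by exhibiting an explicit surjection $C(\rho)\to H_1(\aut(F_{n-2}))\oplus\mathbb{Z}/2\mathbb{Z}$ and checking it kills nothing further, rather than reducing the relation matrix by hand.
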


\begin{proof}
We abelianise the presentation in Theorem~\ref{pres:c}. We first restrict to the case $n=2$: the generators of $C(\rho)$ in this case are $\gamma_{aba^{-1}}$, $(a^{-1};b)$, $\rho$ and $\theta$. The only relators which occur and are non-trivial in the abelianisation are R7 and R8.6, which become $2\llbracket\theta\rrbracket=0$ and $\llbracket\gamma_{aba^{-1}}\rrbracket+\llbracket(a^{-1};b)\rrbracket=0$. This finishes the proof of the assertion for $n=2$.

Next we consider $n=3$. For simplicity we write $c:=c_1$. Here the generators of $C(\rho)$ are $I:=I_1$, $(c^\epsilon;aba^{-1})$, $(c^\epsilon;b)$, $\gamma_{aba^{-1}}$, $\gamma_c$, $(a^{-1};b)$, $(a^{-1};c)$, $\rho$ and $\theta$. From Theorem~\ref{pres:c} we obtain the following relators:
\begin{align*} \text{R1:} && 2\llbracket I\rrbracket&=0  &\text{R4.1:} &&  \llbracket(a^{-1};b)\rrbracket&=0 \\
\text{R3.2:} && 2\llbracket(a^{-1};c)\rrbracket&=0 &\text{R5.1:} && \llbracket\rho\rrbracket&=2\llbracket(c;aba^{-1})\rrbracket\\
\text{R3.4:} && 2\llbracket\gamma_c\rrbracket&=0 &\text{R5.2:} && -\llbracket\rho\rrbracket&=2\llbracket(c;b)\rrbracket\\
\text{R3.7:} && \llbracket(c;aba^{-1})\rrbracket&=\llbracket(c^{-1};aba^{-1})\rrbracket &\text{R7:}  && 2\llbracket\theta\rrbracket&=0\\
\text{R3.7:} &&  \llbracket(c;b)\rrbracket&=\llbracket(c^{-1};b)\rrbracket &\text{R8.4:} && \llbracket(c;b)\rrbracket&=-\llbracket(c;aba^{-1})\rrbracket \\
\text{R4.1} && \llbracket\gamma_{aba^{-1}}\rrbracket&=0 &\text{R8.5:} &&  \llbracket\gamma_c\rrbracket&=\llbracket(a^{-1};c)\rrbracket \end{align*}

All other relators in $H_1(C(\rho))$ either follow from the ones above or are trivial. It follows that $H_1(C(\rho)) \cong \mathbb{Z}\oplus(\mathbb{Z}/2\mathbb{Z})^3$ with the torsion part generated by $\llbracket I\rrbracket$, $\llbracket\gamma_c\rrbracket$ and $\llbracket\theta\rrbracket$ and the torsion-free part generated by  $\llbracket(c;b)\rrbracket$ with $\llbracket\rho\rrbracket=-2\llbracket(c;b)\rrbracket$.

For $n \geq 4$, by checking the relators R1--R8 one finds that there is a homomorphism
$$C(\rho) \to H_1(\aut(F_{n-2})) $$
given by sending the elements $(c_i^\epsilon;aba^{-1})$, $(c_i^\epsilon;b)$, $\g_z$, $(a^{-1};z)$, $\rho$ and $\theta$ to $0$ and letting the remaining generators of $C(\rho)$ act on $F_{n-2}=\langle c_1,\ldots,c_{n-2} \rangle$. We also have the homomorphism $$C(\rho) \to C(\rho)/C^0(\rho)\cong \mathbb{Z}/2\mathbb{Z}$$ that takes every generator except $\theta$ to $0$. Combining these gives a surjective homomorphism $$f:C(\rho) \to H_1(\aut(F_{n-2})) \oplus C(\rho)/C^0(\rho).$$ As $\text{Im} f$ is abelian, this descends to a surjective map: $$f_*:H_1(C(\rho)) \to H_1(\aut(F_{n-2})) \oplus C(\rho)/C^0(\rho).$$

The relation R4.1 implies that $\llbracket (u ; z) \rrbracket = 0$ if there is a symbol $c_j$ different from both $u,z$ and their inverses. Hence any generator $(u ;z)$ not of the form $(c_i^\epsilon,c_j)$ is trivial in $H_1(C(\rho))$. Furthermore, as $\llbracket (c_i;aba^{-1}) \rrbracket=0$ we have $\llbracket \rho \rrbracket =0$ by R5.1. It follows that every $g \in H_1(C(\rho))$ is represented by a word of the form $w\cdot\theta^\delta$, where $w$ is a product of elements of the form $(c_i^\epsilon,c_j)$ and $\delta \in \{0,1\}$. However, if $f_*(g)=0$, this implies that $w$ is a product of commutators in $\aut(F_{n-1})$ and $\delta = 0$, so that $g$ is trivial in $H_1(C(\rho))$. Hence $f_*$ is also injective and is an isomorphism. We finish the proof with the observation that:
\begin{equation*} H_1(\aut(F_{n-2})) = \begin{cases} \mathbb{Z} /2 \mathbb{Z} \oplus \mathbb{Z} / 2\mathbb{Z}, & \text{if $n=4$,} \\ \mathbb{Z}/2\mathbb{Z}, &\text{if $n \geq 5$ .}  \end{cases} \end{equation*}
This may be found by abelianising one's favourite presentation of $\aut(F_n)$ (from \cite{M2}, \cite{Ne}, or \cite{Ni}, say).
\end{proof}

\begin{remark}
Given a basis $a_1,\ldots,a_n$ of $F_n$ and $w\in\langle a_2,\ldots,a_n\rangle$, we can define $\rho\in\aut(F_n)$ by $a_1\to a_1w$ and fixing $a_2,\ldots,a_n$. Abelianisations of centrailsers of these more general right translations are computed in \cite{R}.
\end{remark}

\subsection{Nielsen automorphisms in $\out(F_n)$}

Although the above work has focused on $\aut(F_n)$, the centraliser of the outer automorphism class $\hat\rho\in\out(F_n)$ of a Nielsen automorphism is closely related.

\begin{lemma}\label{l:cautout}
Let $D$ and $\mathcal G$ be as in Section \ref{s:nielsen_dt}, so that $D_{*v}=\rho$ is a Nielsen automorphism and $\widehat{D}=\hat{\rho}$ is its image in $\out(F_n)$. The natural homomorphism
$$C(\rho)\to C(\hat\rho)$$
is surjective with kernel equal to $\ad(F_n)\cap C(\rho)=\ad(G_v)$.
\end{lemma}

\begin{proof}
The two short exact sequences of Theorems \ref{t:pses} and \ref{t:ses} fit into a commutative diagram:
$$\xymatrix{DA(\mathcal{G})\ar@{^(->}[r]\ar[d]^\cong&C^0(\rho)\ar@{->>}[r]\ar[d]&\aut(G_v,\C_v)\ar@{->>}[d]\\
DO(\mathcal{G})\ar@{^(->}[r]&C^0(\hat{\rho})\ar@{->>}[r]&\out(G_v,\mathcal{C}_v)}$$
By diagram chase we see that the natural map $C^0(\rho)\to C^0(\hat \rho)$ is onto. Together with the homomorphisms $\bar\alpha$ from Theorem \ref{t:ses} and $\bar\beta$ from Theorem \ref{t:pses} we get the commutative diagram
$$\xymatrix{C^0(\rho)\ar@{^(->}[r]\ar@{->>}[d]&C(\rho)\ar[r]^-{\bar\beta}\ar[d]&\aut(\G,v)\ar@{^(->}[d]\\
C^0(\hat \rho)\ar@{^(->}[r]&C(\hat \rho)\ar[r]^-{\bar\alpha}&\aut(\G)}$$
We have already seen that $\bar\beta(\theta)$ is non-trivial in $\aut(\G,v)=\aut(\G)\cong\mathbb Z/2\mathbb Z$, so both $\bar\alpha$ and $\bar\beta$ are surjective. Hence the vertical map in the middle is surjective, too. The kernel of the map $C(\rho) \to C(\hat\rho)$ consists of the inner automorphisms in $C(\rho)$. Proposition 7.2 of \cite{CL} tells us that the subgroup of $F_n$ fixed by $\rho$ is $G_v = \langle aba^{-1} , b, c_2, \ldots, c_n \rangle$, and an inner automorphism $ad_g$ commutes with $\rho$ if an only if $g$ is fixed by $\rho$. It follows that $\ad(F_n)\cap C(\rho)=\ad(G_v)$.
\end{proof}

\begin{corollary} The centraliser of a Nielsen automoprhism $\hat\rho$ in $\out(F_n)$ has a presentation consisting of the 
generators and relations given in Theorem~\ref{pres:c}, with the following additional relators:
\begin{tabbing} 
R9.1\qquad\=$\gamma_{aba^{-1}}\rho^{-1}\cdot\prod_{i=1}^{n-2}(c_i;aba^{-1})(c_i^{-1};aba^{-1})=1$,\\
R9.2\> $(a^{-1};b)\rho\cdot\prod_{i=1}^{n-2}(c_i;b)(c_i^{-1};b)=1,$\\
R9.3\> $\gamma_{c_i}(a^{-1};c_i)\cdot \prod_{j \neq i} (c_j;c_i)(c_{j}^{-1};c_i)=1$ for $1 \leq i \leq n-2$.
\end{tabbing}
\end{corollary}

\begin{proof}
By Lemma~\ref{l:cautout}, the kernel of the surjective map $C(\rho) \to C(\hat\rho)$ is $\ad(F_n)\cap C(\rho)=\ad(G_v)$. This is the free group generated by $\ad_{aba^{-1}}$, $ad_b$, and $ad_{c_i}$ for $1 \leq i \leq n-2$. One needs only to add relators to the presentation of $C(\rho)$ corresponding to these elements. These are given by R9.1, R9.2, and R9.3 respectively.
\end{proof}

\begin{corollary} Let $\hat\rho\in\out(F_n)$ be the outer automorphism class of a Nielsen automorphism. Then:
$$H_1(C(\hat\rho))\cong \begin{cases}
\mathbb{Z}\oplus\mathbb{Z}/2\mathbb{Z} ,&\text{if } n=2,\\
\mathbb{Z}\oplus(\mathbb{Z}/2\mathbb{Z})^3 ,&\text{if } n=3,\\
(\mathbb{Z}/2\mathbb{Z})^3 ,&\text{if } n=4,\\
(\mathbb{Z}/2\mathbb{Z})^2 ,&\text{if } n\ge5.
\end{cases}$$
When $n=2$, the class $\llbracket\rho\rrbracket$ is a generator of $\mathbb{Z}$, when $n=3$ it is twice a generator of $\mathbb{Z}$, and otherwise $\llbracket\rho\rrbracket=0$.
\end{corollary}
\begin{proof}
When $n=2$, in the proof of Corollary~\ref{pres:cab} we found that $H_1(C(\rho))$ has a free abelian subgroup generated by $\llbracket \rho \rrbracket$ and $\llbracket \g_{aba^{-1}} \rrbracket$, and that $\llbracket \g_{aba^{-1}} \rrbracket = - \llbracket (a^{-1};b) \rrbracket$. The new relators R9.1 and R9.2 reduce to $\llbracket \g_{aba^{-1}} \rrbracket = \llbracket \rho \rrbracket$ and $\llbracket (a^{-1};b) \rrbracket=- \llbracket \rho \rrbracket$ in the abelianisation, so that when we pass to $\out(F_n)$ we only have a rank one free abelian factor. 

When $n \geq 3$, one can use the work in the proof of Corollary~\ref{pres:cab} to check that $\llbracket\ad_{aba^{-1}}\rrbracket=\llbracket\ad_b\rrbracket=\llbracket\ad_{c_i}\rrbracket=0$. It follows that the natural map $C(\rho)\to C(\hat\rho)$ induces an isomorphism on abelianisations.
\end{proof}

\subsection{Connection to CAT(0) actions}

A {\em CAT(0) space} is a geodesic metric space $(X,d)$ whose geodesic triangles are not thicker than euclidean comparison triangles with the same sidelengths (cf. \cite{BH} for a more precise definition).  A metric space is \emph{proper} if all of its closed balls are compact. The {\em translation length} of an isometry $\gamma:X\rightarrow X$ is defined by
$$|\gamma|=\inf_{x\in X}d(x,\gamma(x)).$$
The work above relates to the following theorem, which appears in the proof of Theorem 2.6 in \cite{B:mcg}:

\begin{theorem}[Bridson, Karlsson, Margulis]
Let $G$ be any group and $g\in G$. Assume that $\llbracket g\rrbracket$ has finite order in $H_1(C(g))$. Then $|g|=0$ whenever $G$ acts by isometries on a proper CAT(0) space.
\end{theorem}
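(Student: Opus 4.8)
The plan is to prove the contrapositive: assuming that $G$ acts by isometries on a proper CAT(0) space $X$ and that $|g|>0$, I will construct a homomorphism $\phi\co C(g)\to\mathbb{R}$ with $\phi(g)\neq0$. Since $\mathbb{R}$ is abelian and torsion-free, any such $\phi$ kills the commutator subgroup and therefore factors as $C(g)\to H_1(C(g))\to\mathbb{R}$; as $\phi(g)\neq0$ lies in a torsion-free group, $\llbracket g\rrbracket$ must have infinite order, contradicting the hypothesis. The construction of $\phi$ divides into two cases according to whether the convex displacement function $x\mapsto d(x,gx)$ attains its infimum $|g|$.

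\emph{Semisimple case.} If the infimum is attained then $g$ is hyperbolic and $\mathrm{Min}(g)=\{x : d(x,gx)=|g|\}$ is a non-empty, closed, convex, $g$-invariant subspace which splits isometrically as $Y\times\mathbb{R}$, with $g$ acting as $(\mathrm{id}_Y,\ \text{translation by }|g|)$ (Bridson--Haefliger II.6.8, a consequence of the Flat Strip Theorem). Any $h\in C(g)$ sends axes of $g$ to axes of $g$, hence preserves $\mathrm{Min}(g)$; since the axes are precisely the $\mathbb{R}$-fibres and $h$ cannot reverse their common direction without conjugating $g$ to $g^{-1}$, projection to the $\mathbb{R}$-factor defines $\phi\co C(g)\to\mathbb{R}$ with $\phi(g)=|g|\neq0$. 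This case is elementary and uses only completeness.

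\emph{Parabolic case.} If the infimum is not attained, I will invoke the theorem of Karlsson and Margulis. Write $A(g)=\lim_n d(x_0,g^nx_0)/n$ for the stable translation length (the limit exists by subadditivity). Using properness of $X$ together with $|g|>0$, one checks $A(g)>0$ --- this rules out a parabolic isometry with positive minimal displacement but sublinear orbit growth, and is the point where properness is essential. Given $A(g)>0$, the Karlsson--Margulis theorem produces a point $\xi=\xi_g\in\partial X$ fixed by $g$ and canonically determined by $g$ (the limit of the orbit $g^nx_0$, independent of $x_0$, equivalently the asymptotic direction of the gradient flow of the displacement function). Canonicity forces $h\xi_g=\xi_{hgh^{-1}}=\xi_g$ for all $h\in C(g)$, so $C(g)$ stabilises $\xi$. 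The Busemann function $b_\xi$ then gives the Busemann cocycle $h\mapsto b_\xi(hx_0)-b_\xi(x_0)$, a homomorphism $\mathrm{Stab}(\xi)\to\mathbb{R}$ independent of $x_0$; restricting to $C(g)$ yields $\phi$, and the Karlsson--Margulis construction identifies $\phi(g)=-A(g)\neq0$.

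I expect the parabolic case to be the main obstacle, and within it two points must be got right: (i) establishing $A(g)>0$ from $|g|>0$ in a proper CAT(0) space; and (ii) the canonicity of the boundary fixed point $\xi$ --- without it, $C(g)$ need not preserve the Busemann cocycle, so $\phi$ would not even be defined on $C(g)$. Both are part of the Karlsson--Margulis analysis of isometries of proper CAT(0) spaces, so the genuine work is in quoting and applying that theorem correctly; the semisimple case is routine CAT(0) geometry, and the concluding observation --- that $\phi$, having abelian torsion-free target, factors through $H_1(C(g))$ and hence forces $\llbracket g\rrbracket$ to have infinite order --- is immediate.
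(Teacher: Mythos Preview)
The paper does not supply a proof of this theorem; it simply quotes it as appearing in the proof of Theorem~2.6 of~\cite{B:mcg}. So there is no argument in the paper itself to compare your attempt against.

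That said, your outline is precisely the argument one expects behind the attribution. The semisimple case via the splitting $\mathrm{Min}(g)\cong Y\times\mathbb{R}$ and projection to the $\mathbb{R}$-factor is standard (Bridson--Haefliger~II.6), and your treatment is correct, including the observation that $h\in C(g)$ cannot reverse the orientation of the axes. The parabolic case via a canonical boundary fixed point and the associated Busemann homomorphism is exactly the content that the name ``Karlsson--Margulis'' is meant to invoke. You correctly isolate the two genuine issues---(i)~passing from $|g|>0$ to positive drift, and (ii)~canonicity of $\xi$ so that $C(g)$ fixes it---and you are right that these are where the cited work is doing the heavy lifting. One small quibble: properness is needed primarily so that $\partial X$ is well-behaved and the Karlsson--Margulis machinery applies, not specifically to force~(i); but since you are deferring~(i) to the literature anyway, this does not affect the validity of your sketch. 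Given that the paper itself only cites the result, your reconstruction is more than adequate.
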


In Corollary \ref{pres:cab} we have seen that $\llbracket\rho\rrbracket$ has infinite order in $H_1(C(\rho))$ if $n\le3$. In fact, in Section 6 of \cite{B:rd} there is a construction for isometric CAT(0) actions of $\aut(F_3)$ such that Nielsen automorphisms act by positive translation length. However, for $n\ge4$ we have seen that $\llbracket\rho\rrbracket=0$. Hence:
\begin{corollary}\label{c:transl}
If $n\ge4$, Nielsen automorphisms always act by zero translation length whenever $\aut(F_n)$ acts isometrically on a proper CAT(0) space.
\end{corollary}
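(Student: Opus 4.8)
The plan is to deduce the statement directly from the Bridson--Karlsson--Margulis theorem quoted above, fed with the computation of Corollary~\ref{pres:cab}. First I would reduce to the specific Nielsen automorphism $\rho\co a\mapsto ab$ of Section~\ref{s:Nielsen}: translation length on a metric space is invariant under conjugation and under passage to inverses, since $d(x,\g(x))=d(\g^{-1}(x),x)$, and any two Nielsen automorphisms of $F_n$ are conjugate in $\aut(F_n)$ up to inversion (permute and invert basis elements). So it suffices to show $|\rho|=0$ for this one $\rho$.

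Next I would invoke Corollary~\ref{pres:cab}: for $n\ge4$ the abelianisation $H_1(C(\rho))$ is finite — equal to $(\mathbb{Z}/2\mathbb{Z})^3$ when $n=4$ and $(\mathbb{Z}/2\mathbb{Z})^2$ when $n\ge5$ — and moreover $\llbracket\rho\rrbracket=0$ in it. In particular $\llbracket\rho\rrbracket$ has finite order in $H_1(C(\rho))$, which is exactly the hypothesis of the Bridson--Karlsson--Margulis theorem with $G=\aut(F_n)$ and $g=\rho$. That theorem then yields $|\rho|=0$ whenever $\aut(F_n)$ acts by isometries on a proper CAT(0) space, which is the assertion of the corollary.

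I do not expect any genuine obstacle here: all the substantive work is in Corollary~\ref{pres:cab} and in the cited theorem, and the corollary is a one-line combination of the two. The only points worth a sentence are the conjugacy-invariance reduction above (so that one need not redo the presentation of Section~\ref{s:Nielsen} for every Nielsen automorphism) and, optionally, a remark that the bound $n\ge4$ is sharp: for $n\le3$ Corollary~\ref{pres:cab} shows $\llbracket\rho\rrbracket$ has infinite order in $H_1(C(\rho))$, and Bridson's construction in \cite{B:rd} exhibits an isometric action of $\aut(F_3)$ on a proper CAT(0) space in which Nielsen automorphisms have positive translation length.
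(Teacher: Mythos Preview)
Your proposal is correct and follows essentially the same approach as the paper: combine Corollary~\ref{pres:cab} (which gives $\llbracket\rho\rrbracket=0$ for $n\ge4$) with the Bridson--Karlsson--Margulis theorem. The paper leaves the reduction to a single Nielsen automorphism implicit, so your explicit conjugacy-and-inversion remark is a small addition rather than a difference in strategy; your sharpness comment likewise mirrors the paper's discussion surrounding the corollary.
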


This sharpens a result of Bridson \cite{B:rd}, who proved Corollary \ref{c:transl} when $n \geq 6$ by showing that $\llbracket\rho\rrbracket=0$ when $n\ge6$ using the lantern relation in mapping class groups, rather than a direct computation of $H_1(C(\rho))$.

The thesis of the first author contains a short geometric proof of Corollary \ref{c:transl}. Here is a sketch of the proof:
\begin{proof}[Alternative proof of Corollary~\ref{c:transl}]
For simplicity, we restrict to the $\aut(F_4)$ case. Two commuting isometries $\alpha$ and $\beta$ in a CAT(0) space satisfy the \emph{parallelogram formula}:
$$|\alpha\beta|^2+|\alpha\beta^{-1}|^2=2(|\alpha|^2+|\beta|^2).$$
(This is proved in \cite{R}.) If we define $\alpha$ and $\beta$ like so:
\begin{align*}
\alpha\co a&\mapsto a, & \beta\co &a \mapsto ab,\\
b&\mapsto b, & &b \mapsto b,\\
c&\mapsto cb, & &c \mapsto c,\\
d&\mapsto db^{-1}, &&d \mapsto d, 
\end{align*}
then $\alpha$ is then conjugate to both $\alpha\beta$ and $\alpha\beta^{-1}$. Hence $\alpha$, $\alpha\beta$, and  $\alpha\beta^{-1}$ have the same translation lengths. The parallelogram formula then implies that $|\beta|=0$.
\end{proof}
\bibliographystyle{plain}

\vspace{2cm}

Moritz Rodenhausen\\Mathematisches Institut\\Endenicher Allee 60\\53115 Bonn\\Germany\\moritz.rodenhausen@gmx.de\\

Richard D. Wade\\Math Department\\The University of Utah\\155 S 1400 E\\Salt Lake City\\Utah 84112\\USA\\wade@math.utah.edu\\

\end{document}